\font\elevensf=cmss10 scaled\magstephalf
\newtheorem{theorem} {\bf THEOREM}[section]
\newtheorem{lemma} {\bf LEMMA}[section]
\newtheorem{counter example} {\bf Counter Example}[section]
\newtheorem{remark} {\bf REMARK}[section]
\newtheorem{definition} {\bf DEFINITION}[section]
\title{Exact and optimal controllability for scalar conservation laws with discontinuous flux}
\author{Adimurthi\thanks{Centre for Applicable Mathematics, Tata Institute of Fundamental Research,
Post Bag No 6503, Sharadanagar,
Bangalore - 560065, India. Email: \tt{aditi@math.tifrbng.res.in}.}
\and
Shyam Sundar Ghoshal\thanks{Gran Sasso Science Institute, Viale Francesco Crispi, 7, 67100 L'Aquila, Italy. Email: \tt{shyam.ghoshal@gssi.infn.it}.}
\and  
Pierangelo Marcati\thanks{Gran Sasso Science Institute, Viale Francesco Crispi, 7, 67100 L'Aquila, Italy and 
Department of Information Engineering, Computer Science and Mathematics, University of L'Aquila,
67100 L'Aquila, Italy. Email: \tt{pierangelo.marcati@gssi.infn.it}/\tt{pierangelo.marcati@univaq.it}.}}
\begin{document}
\maketitle
\begin{abstract}
 This paper deals with an optimal control problem and describes the reachable set for the scalar 1-D conservation laws with discontinuous
flux. Regarding the optimal control problem we first prove the existence of a  minimizer and then we prescribe an algorithm  
to compute it. The same method also  applies to compute the initial data control. 
The proof relies on the explicit formula for the conservation laws with the discontinuous flux and  finer properties of the 
characteristics. 
\end{abstract}
\noindent Key words: Scalar conservation laws, discontinuous flux, 
 exact control, optimal control, Hamilton-Jacobi equation. 

\pagestyle{myheadings}
\thispagestyle{plain}
\markboth{Adimurthi, S. S. Ghoshal and P. Marcati}{CONTROLLABILITY FOR  DISCONTINUOUS FLUX}
\section{Introduction} 
\setcounter{equation}{0}
The goal of this paper is to study the optimal and exact control problem of  the following scalar conservation laws  with discontinuous flux 
\begin{equation}\label{conlaw-equation}
 \left\{ \begin{split}
u_t+F(x,u)_x&=0,\\
  u(x,0)&=u_0(x),
 \end{split}
 \right.
\end{equation}
where the flux $F$ is given by, $F(x,u)=H(x)f(u)+(1-H(x))g(u)$, $H$ is the Heaviside function. 
Through out the present article we assume the fluxes $f,g$ to be $C^1(\mathbb{R})$, 
strictly convex with  superlinear growth; $u_0\in L^\infty$.
We denote by $\theta_f,\theta_g$  the unique minima of the fluxes $f,g$ respectively.

There is no literature concerning  reachable set or any sort of optimal controllability results for   equations of type 
(\ref{conlaw-equation}). In the present paper we  obtain a necessary and sufficient condition
for the reachable set
and we prove the existence of a minimizer for an optimal control problem. In order to obtain an initial data control or finding minimizer of 
optimal control, we use a new backward resolution. The advantage of this approach is that it is constructive and easy to compute.
The main difficulty of this backward resolution is that there are no rarefactions originating from the interface $x=0$, then one cannot
just generalize, to the case $f\neq g,$ the backward construction given in \cite{Sco, Sop} for the case in  $f=g$.

In order to state our main results, we need to introduce various notations and technical arguments hence  
the main Theorems \ref{maintheoremop} and  \ref{exact-control} have been postponed to the sections
\ref{section:optimalcontrol} and \ref{section:exactcontrol} respectively.

The scalar conservation laws with discontinuous flux of type (\ref{conlaw-equation}) has a huge variety of applications in several fields, namely
traffic flow modeling,  modeling gravity, modeling continuous sedimentation
in  clarifier-thickener units, ion etching in  the semiconductor industry and many more.

In the past two decades the first order model of type (\ref{conlaw-equation}) has been extensively studied from both the theoretical 
and numerical point of view. Concerning the uniqueness it is worth to mention that the following Kru\v{z}kov
type entropy inequalities in both  the two upper quarter-planes are not sufficient to guarantee the uniqueness, 

\begin{equation}\label{Kruzkov2}
\begin{array}{lll}
 \int\limits_0^\infty \int\limits_0^\infty \left(\phi_1(u)\frac{\partial s}{\partial t}+\psi_1(u)\frac{\partial s}{\partial x}\right)&\geq 
 -\int\limits_0^\infty \psi_1(u(0+,t))s(0,t)dt,\\
  \int\limits_{-\infty}^0 \int\limits_0^\infty \left(\phi_2(u)\frac{\partial s}{\partial t}+\psi_2(u)\frac{\partial s}{\partial x}\right)&\geq 
  \int\limits_0^\infty \psi_2(u(0+,t))s(0,t)dt.
\end{array}
 \end{equation}
Here $(\phi_i,\psi_i)$ denote the entropy pairs  for $i=1,2$ and $s\in C_0^1(\mathbb{R}\times \mathbb{R}_+)$, a non-negative test function. 
Consequently
one need an extra criteria on the interface called "interface entropy condition" (see \cite{Kyoto}) given by 
\begin{equation}\label{entropy-condition}
 \mbox{meas}\{t: f^\prime(u(0+,t))>0, g^\prime(u(0-,t))<0\}=0.
\end{equation}
Using this extra entropy along with the above Kru\v{z}kov type inequalities the 
uniqueness result has been obtained in \cite{Kyoto}.
On the other hand, the existence result has been proved in several ways, namely via Hamilton-Jacobi, 
convergence of numerical schemes, vanishing viscosity method, for further details we refer the reader to  \cite{Kyoto, Siam, Jhde, Jde,  And1, Burger,
 BurKarRisTow, Diehl5, Gimseresebro, Towers} and the references therein. The present paper
uses the explicit formula obtained in  \cite{Kyoto, Jde}, via  the  Hamilton-Jacobi Cauchy problem.
%\begin{equation}\label{Hamilton-Jacobi-equation}
% \left\{  \begin{split}
%v_t+g(v_x)&=0 \ \mbox{if}\ x<0,\ t>0,\\
%v_t+f(v_x)&=0 \ \mbox{if}\ x>0,\ t>0,\\ 
%v(x,0)&=v_0(x),  \ \mbox{if}\ x\in \mathbb{R}.\\
 %\end{split}\right.
%\end{equation}
By using this formula it can be shown that if  $v_0$ is uniformly Lipschitz then $v(\cdot, t)$ is also uniformly Lipschitz for all $t>0$ and 
if we denote $u:=\frac{\partial v}{\partial x}$, it follows easily that $u$ is the unique weak solution (see \cite{Kyoto}) satisfying
(\ref{Kruzkov2}),  enjoys (\ref{entropy-condition}) near interface and 
satisfies the following Rankine-Hugoniot condition on the interface.
 \begin{equation}\label{RH-condition}	
   \textrm{meas}\big\{t:f(u(0+,t))\neq g(u(0-,t))\big\}=0.
\end{equation} 
Regarding  the well-posedness theory to $f=g$ case, we refer the reader to \cite{Da1} for Cauchy problem 
and for the initial boundary value problem to \cite{Jos}.

Through out this paper we work with the solution which is obtained from the Hamilton-Jacobi formulation.
 
Concerning the exact controllability for the scalar convex conservation laws the first work has been done in \cite{An1}, where they considered
the initial boundary value problem in a quarter plane with $u_0=0$ and by using  one boundary control they investigated  the reachable set. 
As in \cite{Sco}, they considered  $u_0\in L^\infty$ and three possible cases, namely pure initial value problem with 
initial data control outside any domain, initial boundary value problem in a quarter plane with one boundary control and initial boundary problem in a strip with
two boundary controls to get the reachable sets in a complete generality. In both the articles the Lax-Oleinik type formulas has been exploited.
An alternative approach has been provided in \cite{Hor} by using the return method (see \cite{Coron}). For the viscous Burgers equation any non-zero
state can be reached in finite time by two boundary controls \cite{Glass}.
A general theory for the system of conservation
laws is still largely unavailable, nevertheless
in \cite{Bre1}, the authors  constructed an example showing that exact controllability to a constant cannot be reached  in a finite time and proved
asymptotic stabilization to a constant  by two boundary controls. Recently, under dissipative boundary conditions the 
asymptotic stabilization to $0$ has been proved in \cite{CoronShyam} for $2\times 2$ system, when the velocities are positive.
For the Temple class systems and triangular type systems we refer the reader 
to \cite{An2} and \cite{SamJEE} respectively. 

Let us briefly discuss the optimal controllability results for the case $f=g$.  Assume  the target function
$k\in L^2_{loc}$, $T>0$, we denote by  $J_{\{f=g\}}$, a cost functional, defined in the following way 
\begin{equation}\begin{array}{ll}
                  J_{\{f=g\}}(u_0)=\int\limits_{-\infty}^{\infty}|f^\prime(u(x,T))- f^\prime(k(x))|^2dx ,
                \end{array}                         
\label{optimalcontrolf=g}\end{equation}
where $u_0\in L^\infty(\mathbb{R})$, $u_0\equiv\theta_f$ outside a compact set, $\theta_f$  being the only critical point of the flux $f$.
Here $u(\cdot,T)$ denotes the unique weak solution  at $t=T$ to the Cauchy problem 
 (\ref{conlaw-equation}), in the case  $f=g$ with  initial datum $u_0$. Then in this case, 
the optimal control reads like : find a $w_0$ such that $J_{\{f=g\}}(w_0)=\min\limits_{u_0}J_{\{f=g\}}(u_0).$
 In \cite{Cas, Cas2}, they considered the above optimal control problem for the  Burgers' equation and proved such  minimizer exists and proposed a 
numerical scheme called "alternating decent algorithm", although the convergence of these scheme  still remains open.
Whereas in \cite{Sop}, they made use of the Lax-Oleinik formula and derived a numerical backward construction which 
converges to a solution  of the above problem. The latter method can be applied also to  general convex fluxes as long as a
Lax-Oleinik type formula is available. It has to be noticed that even for the case $f=g$, 
due to the occurrence of the shocks in the solution of (\ref{conlaw-equation}), one may have several minimizers of the optimal control problem
(\ref{optimalcontrolf=g}).
 
 We organize the paper in the following manner, section 2 deals with the existing results collected from \cite{Kyoto, Jde}. Section 3 consists
 of some important Lemmas and the backward construction. Then we state and prove the result concerning optimal controllability in section 4. Finally
 in section 5, we state and prove the exact controllability result.

\section{Known facts  about discontinuous fluxes}\label{section:knownresults}
In order to make the paper self contained we recall  some results, definitions and notations  from \cite{Kyoto}.

\begin{definition} \textbf{Control curve} : Let $0 <t,\ x\in \mathbb{R} $ and $\gamma\in C([0,t],\mathbb{R})$. Then $\gamma$ is said to be a 
control curve if the following holds: it is piecewise affine on $[0,t]$
with at most 3 segments, each segment must be completely inside a closed quarter plane. If they are exactly 3, then the middle one is on the 
line $x=0$ and the other two must be either in the positive or negative quarter plane. Moreover, no segment can cross $x=0$. Let $c(x,t)$ be the 
set of control curves, $c_0(x,t)$ is the subset of $c(x,t)$ consists of only one segment. $c_r(x,t)$ is the subset containing exactly 3 segments
and     $c_b(x,t)= c(x,t)\setminus \{c_0(x,t) \cup c_r(x,t)\}$.
\end{definition}
\begin{definition}\textbf{Cost function}: Let  $f^*, g^*$ be the convex duals of the fluxes. 
Let us assume that $v_0: \mathbb{R}\rightarrow \mathbb{R}$ be an  uniformly Lipschitz continuous function. 
Let $(x,t)\in\mathbb{R}\times \mathbb{R}_+$,  $\gamma\in c(x,t)$.
The cost functional $\Gamma$ associated to $v_0$ is defined by 
\begin{eqnarray*}\begin{array}{llll}
 \Gamma_{v_0,\gamma}(x,t)=v_0(\gamma(0))+\int\limits_{\{\theta\in[0,t]\ :\ \gamma(\theta)>0 \}} f^*(\dot{\gamma}) d\theta
 +\int\limits_{\{\theta\in[0,t]\ :\ \gamma(\theta)<0 \}} g^*(\dot{\gamma}) d\theta \\+ \mbox{meas}\{\theta\in[0,t]\
 :\ \gamma(\theta)=0 \}\mbox{min}\{f^*(0),g^*(0)\}.
              \end{array}
\end{eqnarray*}
Then we define the value function   $v:\mathbb{R}\times \mathbb{R}_+ \rightarrow \mathbb{R}$ by 
%\begin{equation}
 $v(x,t)=\inf\limits_{\gamma\in c(x,t)}\{ \Gamma_{\gamma, v_0}(x,t) \}.$
%\end{equation}
\end{definition}
\begin{definition} Let us define by 
 $ch(x,t)=\{\gamma \ : \ \Gamma_{v_0,\gamma}(x,t)=v(x,t)\},$ the set of \textbf{characteristics curves}.
Let $t>0,$ define 
\begin{eqnarray*}\begin{array}{lllll}
R_1(t)&=&\inf\{x\ ;\ x\geq0,\ ch(x,t)\subset c_0(x,t)\},\\ 
R_2(t)&=&\left\{\begin{array}{lll}\inf\{x\ ;\ 0\leq x\leq R_1(t),\ ch(x,t)\cap c_r(x,t)\neq\phi\},\\ 
                 R_1(t)\quad \mbox{if the above set is empty}.
                \end{array}\right.\\
L_1(t)&=&\sup\{x\ ;\ x\leq0,\ ch(x,t)\subset c_0(x,t)\},\\ 
L_2(t)&=&\left\{\begin{array}{lll}\sup\{x\ ;\ L_1(t)\leq x\leq 0,\ ch(x,t)\cap c_r(x,t)\neq\phi\},\\ 
                 L_1(t)\quad \mbox{if the above set is empty}.
                \end{array}\right.\end{array}
                                 \end{eqnarray*}\end{definition}
                    For   $f=g$ case, a detailed study of the above curves has been done in \cite{Sco, Ssh}.
\begin{theorem}\label{AG1} (See \cite{Kyoto}) Let $u_0\in L^\infty(\mathbb{R})$ and  $v_0(x):= \int\limits_0^x u_0(\theta) d\theta.$
Then 
\begin{enumerate}
  \item Then the   function $v$ is uniformly Lipschitz continuous and  $u:=\frac{\partial v}{\partial x}$
  solves (\ref{conlaw-equation}) in weak sense with initial data $u_0$.
 \item $u$ satisfies Rankine-Hugoniot condition (\ref{RH-condition}) and interface entropy condition  (\ref{entropy-condition}) near the interface.
\item $R_1(\cdot), L_1(\cdot)$ are Lipschitz continuous functions and there exists a function  
$y: \{(-\infty,L_1(t)]\cup [R_1(t),\infty)\}\times (0,\infty)\rightarrow \mathbb{R}$  such that for all $t>0$, $x\mapsto y(\cdot,t)$ 
is non decreasing function.
\item  There exist   non increasing function $t_+:[0,R_1(t)]\rightarrow [0,t]$ and a  non decreasing function $t_-:[L_1(t),0]\rightarrow [0,t]$ 
such the Explicit formula is given by  
\begin{eqnarray}
 f^\prime(u(x,t))&=
 \left(\frac{x-y(x,t)}{t}\right)\mathbf{1}_{x\geq R_1(t)}+ 
\left(\frac{x}{t-t_+(x,t)}\right) \mathbf{1}_{0\leq x<R_1(t)}.
\label{r2.12}\\
 g^\prime(u(x,t))&=
 \left(\frac{x-y(x,t)}{t}\right)\mathbf{1}_{x\leq L_1(t)}+ \left(\frac{x}{t-t_-(x,t)}\right) \mathbf{1}_{L_1(t)<x<0}.
\label{r2.13}
\end{eqnarray}
\item Let $V_+=\{t: R_1(t)>0\}, V_-=\{ t: L_1(t)<0 \}$. Then there exist non increasing function $y_{-,0}:V_+\rightarrow (-\infty,0]$ and a
non decreasing function $y_{+,0}:V_-\rightarrow [0,\infty)$ such that 
$
 g^\prime(u(0-,t))=- \frac{y_{-,0}(t)}{t},$ for $t\in V_+\setminus D_+,$
 $f^\prime(u(0+,t))=- \frac{y_{+,0}(t)}{t},$ for $t\in V_-\setminus D_-$
and $f(u(0+,t))=g(u(0-,t)),$ for $t\in (V_+\setminus D_+)\cup (V_-\setminus D_-),
$
where $D_\pm$ are the points of discontinuities of $y_{\pm,0}.$
%Since $y_{\pm,0}$ are monotone functions therefore $D_-\cup D_+$ is a
%countable subset of $\mathbb{R}.$
\item For each $T>0,$ one of the following holds, 
\begin{enumerate}
 \item [i).] If $R_1(T)>0, L_1(T)=0$, then $\forall\ t\in (t_+(R_1(T)-,T), T), R_1(t)>0,$\\
 where  $t_+(R_1(T)-,T)=\lim\limits_{x\rightarrow R_1(T)-}t_+(x,T),$ (see figure \ref{Fig1}, case i).
  \item [ii).] If $R_1(T)=0, L_1(T)<0$, then $\forall\ t\in (t_-(L_1(T)+,T), T), L_1(t)<0,$\\
  where $t_-(L_1(T)+,T)=\lim\limits_{x\rightarrow L_1(T)+}t_-(x,T)$.
 \item [iii).] $R_1(T)=0=L_1(T),$ (see figure \ref{Fig1}, case iii).
\end{enumerate}\end{enumerate}\end{theorem}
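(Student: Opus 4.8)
The plan is to argue directly from the definition of $v$ as the value function of the Hopf--Lax/Lax--Oleinik-type optimal control problem with the interface penalty $\min\{f^*(0),g^*(0)\}$; note first that $v_0(x)=\int_0^x u_0$ is Lipschitz with constant $\|u_0\|_\infty$. Since $f,g$ are finite, strictly convex and superlinear, the duals $f^*,g^*$ are finite, $C^1$, strictly convex and superlinear, so $\Gamma_{v_0,\gamma}(x,t)$ is coercive in the slopes of the (at most three) affine pieces of $\gamma$ --- a runaway slope produces a superlinear cost dominating the Lipschitz growth of $v_0(\gamma(0))$ --- while, modulo reparametrization, $c(x,t)$ is described by a bounded number of real parameters (the foot of $\gamma$ on $\{t=0\}$ together with the times at which $\gamma$ enters and leaves $\{x=0\}$). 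The direct method then gives a minimizer, so $ch(x,t)\neq\emptyset$; comparing minimizers for nearby endpoints (adjust a competitor by a short segment of controlled cost) gives a uniform Lipschitz bound on $v$, hence Lipschitz continuity of $R_1(\cdot),L_1(\cdot)$ once these curves are identified in Step~3.

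\textbf{Step 2: Hamilton--Jacobi equation and the entropy conditions.} A dynamic-programming argument shows $v$ is a viscosity solution of $v_t+f(v_x)=0$ in $\{x>0\}$ and of $v_t+g(v_x)=0$ in $\{x<0\}$, the coupling carried by the $\min\{f^*(0),g^*(0)\}$ term. Differentiating in $x$, $u=v_x$ is a weak solution of \eqref{conlaw-equation} with datum $u_0$; the Oleinik one-sided bound inherent in the Hopf--Lax formula gives admissibility away from the interface, optimality of curves resting on $\{x=0\}$ for positive time forces \eqref{RH-condition}, and the interface penalty prevents a minimizer leaving $\{x=0\}$ rightward with $f'(u(0+,\cdot))>0$ simultaneously with one leaving leftward with $g'(u(0-,\cdot))<0$, which is exactly \eqref{entropy-condition}. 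This settles items~1--2.

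\textbf{Step 3: no-crossing, monotone foot maps, explicit formula.} The geometric heart is that two \emph{distinct} minimizers cannot cross in the open upper half-plane: a transversal crossing yields a strictly cheaper broken competitor by strict convexity of $f^*$ (resp.\ $g^*$), exactly as in classical Lax--Oleinik theory, the only new verification being that the rearrangement stays in the admissible class (at most three segments, the middle one on $\{x=0\}$). Hence the foot of the terminal segment reaching $(x,t)$ --- written $y(x,t)$ when $\gamma$ stays in one quarter plane, and $t_+(x,t)$ (resp.\ $t_-(x,t)$) when $\gamma$ first meets $\{x=0\}$ --- is monotone in $x$, and the $x$-values where the \emph{type} of the minimizer switches are by definition $R_2\le R_1$ on the right and $L_1\le L_2$ on the left. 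Reading $\dot\gamma$ off the terminal segment landing at $(x,t)$ gives \eqref{r2.12}--\eqref{r2.13}, i.e.\ items~3--4; the same computation along minimizers ending at $(0\pm,t)$ gives item~5, with monotonicity of $y_{-,0},y_{+,0}$ again from no-crossing, $D_\pm$ their at most countable jump sets, and $f(u(0+,t))=g(u(0-,t))$ off $D_\pm$ being \eqref{RH-condition} once more.

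\textbf{Step 4: the trichotomy (item 6), and the main obstacle.} Since $R_1(T)\ge 0$ and $L_1(T)\le 0$ always, the only alternative outside cases (i)--(iii) is $R_1(T)>0$ and $L_1(T)<0$, and the first task is to exclude it: $R_1(T)>0$ forces the characteristics filling $\{0<x<R_1(T)\}$ at time $T$ to issue from the interface moving rightward, hence $f'(u(0+,T))\ge 0$, whereas item~5 read on $V_-=\{t:L_1(t)<0\}$ gives $f'(u(0+,T))\le 0$; this and the symmetric statement for $g'(u(0-,T))$, combined with \eqref{RH-condition} and the fact that no rarefaction can issue from the interface, is incompatible with nondegenerate fans on both sides at $T$. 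Granting this, suppose $R_1(T)>0$, $L_1(T)=0$, and fix $t\in(t_+(R_1(T)-,T),T)$; since $t_+(x,T)\to t_+(R_1(T)-,T)$ as $x\uparrow R_1(T)$, pick $x_0\in(0,R_1(T))$ with $t_+(x_0,T)<t$. The minimizer for $(x_0,T)$ is the single segment from $(0,t_+(x_0,T))$ to $(x_0,T)$, which at time $t$ sits at some $(x_*,t)$ with $x_*>0$ and whose restriction to $[t_+(x_0,T),t]$ is, by optimality of subarcs, a minimizer for $(x_*,t)$ lying in $\{x>0\}$; hence $ch(x_*,t)\subset c_0(x_*,t)$, so $R_1(t)\le x_*$ and $R_1(t)>0$. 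Case~(ii) is symmetric and case~(iii) is the remaining alternative. I expect the genuine difficulty to be the interface analysis used in Steps~2 and~4 --- deriving \eqref{RH-condition} and excluding the simultaneous fans --- which rests on a sharp description of how the penalty $\min\{f^*(0),g^*(0)\}$ acts on curves that dwell on $\{x=0\}$ or touch it at a single instant, and on the nonexistence of rarefactions issuing from the interface; this is precisely where the discontinuity of the flux and the hypothesis $f\neq g$ enter, the rest being a careful but essentially classical adaptation of the Lax--Oleinik calculus.
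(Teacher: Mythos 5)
First, a point of comparison: the paper does not prove Theorem \ref{AG1} at all --- it is quoted from \cite{Kyoto} (Section \ref{section:knownresults} is explicitly a collection of known facts), so there is no in-paper argument to measure your proposal against. What you have written is a from-scratch reconstruction along the same variational lines as the source: direct method for the value function, dynamic programming for the Hamilton--Jacobi equation in each quarter plane, and no-crossing of minimizers for the monotone foot maps and the explicit formulas (\ref{r2.12})--(\ref{r2.13}). As a road map this is faithful to how these results are actually proved, and Steps 1--3 are essentially right at the level of detail offered.

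One step in Step 4 is wrong as written. In case (i) of item 6 you restrict the minimizer for $(x_0,T)$ to the time interval ending at $t$ and conclude that $ch(x_*,t)\subset c_0(x_*,t)$, ``so $R_1(t)\le x_*$ and $R_1(t)>0$.'' This fails twice: the restricted curve still passes through the interface at time $t_+(x_0,T)>0$, so (extended back to $t=0$ along the original minimizer) it lies in $c_r\cup c_b$, not in $c_0$; and even if $ch(x_*,t)\subset c_0(x_*,t)$ held, it would only give the upper bound $R_1(t)\le x_*$, which says nothing about $R_1(t)>0$. The correct conclusion from your own sub-arc argument is the opposite inclusion: the sub-arc is a minimizer for $(x_*,t)$ that touches $\{x=0\}$, hence $ch(x_*,t)$ meets $c_r\cup c_b$ and is not contained in $c_0(x_*,t)$; by the no-crossing monotonicity of your Step 3 the set $\{x\ge 0:\ ch(x,t)\subset c_0(x,t)\}$ is an upper interval, whence $R_1(t)\ge x_*>0$. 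A second, softer gap: your exclusion of the fourth alternative $R_1(T)>0$, $L_1(T)<0$ leans on the interface entropy condition (\ref{entropy-condition}), which only constrains a set of times of measure zero; to get a contradiction from behaviour at the single time $T$ you must first show that $R_1>0$ and $L_1<0$ persist on an interval of times near $T$, which is essentially the content of 6(i)--(ii) that you are in the middle of proving, so the order of the argument needs care (compare the separate persistence argument in Claim 2 of the proof of Lemma \ref{norarefaction}).
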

\begin{figure}[ht]\label{Fig1}
        \centering
        \def\svgwidth{0.8\textwidth}
        \begingroup
    \setlength{\unitlength}{\svgwidth}
  \begin{picture}(1,0.55363752)%
    \put(0,0.1){\includegraphics[width=0.85\textwidth]{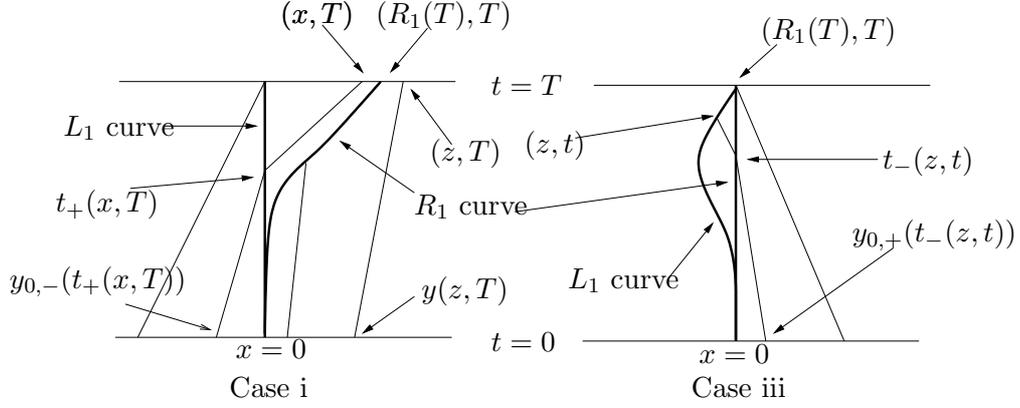}}%
    \put(0.39,0.27){\color[rgb]{0,0,0}\makebox(0,0)[lb]{\smash{$R_1$ curve}}}%
        \put(0.999,0.33){\color[rgb]{0,0,0}\makebox(0,0)[lb]{\smash{$t_-(z,t)$}}}%
                \put(0.959,0.234){\color[rgb]{0,0,0}\makebox(0,0)[lb]{\smash{$y_{0,+}(t_-(z,t))$}}}%
                \put(0.533,0.35){\color[rgb]{0,0,0}\makebox(0,0)[lb]{\smash{$(z,t)$}}}%
    \put(-0.076,0.273){\color[rgb]{0,0,0}\makebox(0,0)[lb]{\smash{$t_+(x,T)$}}}%
        \put(-0.136,0.173){\color[rgb]{0,0,0}\makebox(0,0)[lb]{\smash{$y_{0,-}(t_+(x,T))$}}}%
        \put(.59,0.173){\color[rgb]{0,0,0}\makebox(0,0)[lb]{\smash{$L_1$ curve}}}%
                \put(-0.066,0.373){\color[rgb]{0,0,0}\makebox(0,0)[lb]{\smash{$L_1$ curve}}}%
    \put(0.49,0.09){\color[rgb]{0,0,0}\makebox(0,0)[lb]{\smash{$t=0$}}}%
        \put(0.76,0.075){\color[rgb]{0,0,0}\makebox(0,0)[lb]{\smash{$x=0$}}}%
                \put(0.216,0.52){\color[rgb]{0,0,0}\makebox(0,0)[lb]{\smash{$(x,T)$}}}%
                                \put(0.216,0.52){\color[rgb]{0,0,0}\makebox(0,0)[lb]{\smash{$(x,T)$}}}%
                                \put(0.34,0.52){\color[rgb]{0,0,0}\makebox(0,0)[lb]{\smash{$(R_1(T),T)$}}}%
                                 % \put(0.716,0.5){\color[rgb]{0,0,0}\makebox(0,0)[lb]{\smash{$(x,T)$}}}%
                                \put(0.84,0.5){\color[rgb]{0,0,0}\makebox(0,0)[lb]{\smash{$(R_1(T),T)$}}}%
                                                                \put(0.41,0.34){\color[rgb]{0,0,0}\makebox(0,0)[lb]{\smash{$(z,T)$}}}%
        \put(0.158,0.081){\color[rgb]{0,0,0}\makebox(0,0)[lb]{\smash{$x=0$}}}%
                \put(.15,0.03){\color[rgb]{0,0,0}\makebox(0,0)[lb]{\smash{Case i}}}%
                  \put(.75,0.03){\color[rgb]{0,0,0}\makebox(0,0)[lb]{\smash{Case iii}}}%
                   \put(0.399,0.16){\color[rgb]{0,0,0}\makebox(0,0)[lb]{\smash{$y(z,T)$}}}%
    \put(0.49,0.4269){\color[rgb]{0,0,0}\makebox(0,0)[lb]{\smash{$t=T$}}}%
  \end{picture}%
\endgroup
        \caption{Case i : when $R_1(T)>0, L_1(T)=0$, Case iii : when $R_1(T)=0, L_1(T)=0.$}
        \label{fig:2}
\end{figure}
\section{Key technical lemmas}
Let us denote
$ f_+=f\mid_{[\theta_f,\infty)}, f_-=f\mid_{(-\infty,\theta_f]}, g_+=g\mid_{[\theta_g,\infty)}$, $
g_-=g\mid_{(-\infty,\theta_f]}$, where $\theta_f,\theta_g$ be the unique minimums of the 
fluxes $f,g$ respectively.
Let  $\bar{\theta}_f, \bar{\theta}_g$  such that 
\begin{eqnarray*}
\left\{\begin{array}{lll}
f^\prime(\bar{\theta}_g)\geq 0, f(\bar{\theta}_g)=g(\theta_g) &\mbox{if}& g(\theta_g)\geq f(\theta_f),\\
g^\prime(\bar{\theta}_f)\leq 0, f(\theta_f)=g(\bar{\theta}_f) &\mbox{if}& g(\theta_g)\leq f(\theta_f).
\end{array}\right.
\end{eqnarray*}
Let 
\begin{eqnarray*}
I_+=\left\{\begin{array}{lllll}
\ [\bar{\theta}_g,\infty) &\mbox{if}& g(\theta_g)\geq f(\theta_f),\\
\ [\theta_f,\infty) &\mbox{if}& g(\theta_g)\leq f(\theta_f).
\end{array}\right.
\end{eqnarray*}
Define $h_+:I_+\rightarrow [0,\infty)$ by 
$ h_+(p)=g^\prime\circ g^{-1}_+\circ f\circ {f^{\prime}}^{-1}(p).$
Then clearly $h_+$ is well defined and strictly increasing function. Let 
\begin{eqnarray*}
I_-=\left\{\begin{array}{lllll}
\ [\theta_g,\infty) &\mbox{if}& g(\theta_g)\geq f(\theta_f)\\
\   [\bar{\theta}_f,\infty) &\mbox{if}& g(\theta_g)\leq f(\theta_f).
\end{array}\right.
\end{eqnarray*}
Similarly we define  a  strictly decreasing function $h_-:I_-\rightarrow (-\infty,0]$ by 
$ h_-(p)=f^\prime\circ f^{-1}_-\circ g\circ {g^{\prime}}^{-1}(p).$
\subsection{No rarefaction waves on the interface}
The following Lemma plays a key role in our result since excludes that  forward rarefaction waves  emanates  from the interface.
\begin{lemma}\label{norarefaction}
 $x\mapsto t_+(x,t)$ is a  strictly decreasing function in $(0,R_1(t))$ and 
  $x\mapsto t_-(x,t)$ is  a strictly increasing function in $(L_1(t),0)$.
\end{lemma}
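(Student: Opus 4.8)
The plan is to exploit the explicit formula (\ref{r2.12})--(\ref{r2.13}) together with the monotonicity of characteristics and the interface condition from Theorem \ref{AG1}. Fix $t>0$ and work in the positive quarter plane; the negative case is symmetric. For $0<x<R_1(t)$ the characteristic through $(x,t)$ belongs to $c_r(x,t)\cup c_b(x,t)$: it is a curve that leaves the interface $x=0$ at time $t_+(x,t)$ and travels along the straight segment with slope $f'(u(x,t))=x/(t-t_+(x,t))>0$ up to $(x,t)$. The key geometric fact is that two such characteristic segments, issuing from $(0,s_1)$ and $(0,s_2)$ with $s_1\neq s_2$, cannot cross in the open quarter plane $\{x>0\}$ — this is the standard no-crossing property of genuine (backward) characteristics for the Lax--Oleinik/Hamilton--Jacobi solution, and it is what forces $t_+(\cdot,t)$ to be monotone.

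The steps I would carry out, in order: (1) Record that $x\mapsto t_+(x,t)$ is non-increasing. This follows because if $x_1<x_2$ in $(0,R_1(t))$ and the two characteristic segments reach $(x_1,t)$ and $(x_2,t)$ from interface-times $t_+(x_1,t)$ and $t_+(x_2,t)$, then if $t_+(x_1,t)<t_+(x_2,t)$ the segment to $(x_1,t)$ starts lower on the $t$-axis yet ends to the left at the same final time; since both start at $x=0$ the two segments must then cross at some point with $x>0$, contradicting no-crossing. (So $t_+$ is non-increasing; the same argument with the roles reversed rules out equality unless the segments coincide.) (2) Upgrade non-increasing to \emph{strictly} decreasing. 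Suppose $t_+(x_1,t)=t_+(x_2,t)=:s$ for $x_1<x_2$. Then both characteristic segments emanate from the \emph{same} interface point $(0,s)$ and are straight lines, hence they have different slopes $x_1/(t-s)<x_2/(t-s)$. By the explicit formula $f'(u(x_1,t))=x_1/(t-s)$ and $f'(u(x_2,t))=x_2/(t-s)$, so $u$ is strictly monotone in $x$ on $[x_1,x_2]$; but a fan of straight characteristics issuing from a single point $(0,s)$ with a continuum of distinct positive slopes is precisely a forward rarefaction wave emanating from the interface, and this is excluded — either directly because characteristics in $c_r$ have a middle segment genuinely on the line $x=0$ over a set of positive measure (so distinct ones use distinct interface times), or via the interface entropy condition (\ref{entropy-condition}) and the Rankine--Hugoniot relation (\ref{RH-condition}), which together with part 5 of Theorem \ref{AG1} (the function $y_{\pm,0}$ governing the interface trace) pin down at most one characteristic per interface time. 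Hence $t_+(x_1,t)=t_+(x_2,t)$ is impossible, giving strict monotonicity.

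The main obstacle is Step (2): turning "no two characteristics from distinct interface points cross" (which only yields \emph{weak} monotonicity) into the \emph{strict} statement. The content there is genuinely the assertion in the subsection title — that no rarefaction fan originates at $x=0$ going forward in time — so the proof must invoke the structural description of the solution near the interface (parts 2, 5, and 6 of Theorem \ref{AG1}, especially the monotone interface-trace functions $y_{\pm,0}$ and the Rankine--Hugoniot identity $f(u(0+,t))=g(u(0-,t))$) rather than pure characteristic geometry. I would phrase Step (2) as: a putative interval of constancy of $t_+(\cdot,t)$ produces, on a time-interval of positive measure at the interface, a value of $f'(u(0+,\cdot))$ forced to be $0$ on one side while $x/(t-s)>0$ on the other, contradicting the entropy/Rankine--Hugoniot bookkeeping; then conclude. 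The negative-quarter-plane statement for $t_-$ follows by the obvious reflection $x\mapsto -x$, $f\leftrightarrow g$.
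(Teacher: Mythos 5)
Your Step (1) (weak monotonicity of $t_+(\cdot,t)$ via non-crossing) is fine but is already contained in part 4 of Theorem \ref{AG1}; the entire content of the lemma is your Step (2), and there the proposal has a genuine gap. You correctly identify that an interval of constancy $t_+(\cdot,T)\equiv t_0$ on $(x_0,x_1)$ means a fan of characteristics leaving the single interface point $(0,t_0)$, but neither of your two suggested ways of excluding it is an argument. The claim that ``characteristics in $c_r$ have a middle segment on $x=0$ over a set of positive measure, so distinct ones use distinct interface times'' is unsupported and essentially begs the question: nothing in the definition of control/characteristic curves prevents several minimizers from leaving $(0,t_0)$ with different slopes (for a single convex flux such forward fans do occur from initial discontinuities; the point of the lemma is that they cannot occur at the interface at positive time, and that requires the coupling to the other half-plane). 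Your second alternative names the right ingredients (R--H, the entropy condition, the monotone traces $y_{\pm,0}$) but gives no mechanism, and the concrete phrasing you propose --- that constancy of $t_+$ forces $f'(u(0+,\cdot))=0$ on a time interval of positive measure --- is not correct: the fan corresponds to the \emph{single} time $t_0$, and both (\ref{entropy-condition}) and (\ref{RH-condition}) are only measure-theoretic statements, so they say nothing at one instant.

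What is actually needed, and what the paper does, is a quantitative two-sided trace argument around $t_0$. First one must show (their Claim 2) that $R_1(t)>0$, hence by (\ref{entropy-condition}) $L_1(t)=0$, on a whole left-neighborhood $[t_0-\epsilon,t_0]$; this is a nontrivial step proved by contradiction with a competitor straight line and the strict convexity of $f^*$, and without it the left-side explicit formula $g'(u(0-,t))=-y_{-,0}(t)/t$ is not even available on both sides of $t_0$. Then one takes sequences $t_k\downarrow t_0$, $\bar t_k\uparrow t_0$ of times where R--H holds, shows the right traces converge to $(f')^{-1}(x_0/(T-t_0))<(f')^{-1}(x_1/(T-t_0))$, so $g'(u(0-,\cdot))$ must jump by at least some $\delta_2>0$ across $t_0$; but writing $g'(u(0-,t_k))=-y_k/t_k$, $g'(u(0-,\bar t_k))=-\bar y_k/\bar t_k$ and using the monotonicity of $y_{-,0}$ together with non-crossing of the left characteristics, the difference $(-\bar y_k t_k+y_k\bar t_k)/(\bar t_k t_k)$ has a non-positive limit --- contradiction. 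This interplay between the jump of the right trace and the monotone left-side quantity $y_{-,0}$ is the heart of the proof and is absent from your sketch, so as written the proposal does not establish strict monotonicity.
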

\begin{proof}
 Without loss of generality we assume that for some $T>0$,  $R_1(T)>0$ and $L_1(T)=0.$
 Define $X_T:[t_+(R_1(T)-,T), T]\rightarrow [0,R_1(T)]$ by $X_T(t)=\mbox{Max}\{x\ :\ t_+(x,T)\geq t\}.$ Since $x\mapsto t_+(x,T)$
is a non increasing function, hence $t\mapsto X_T(t)$ is a  non increasing function. Since the limits of characteristics  curves are 
characteristics curves,
hence there exists a $\gamma\in (c_r(X_T(t),T)\cup c_b(X_T(t),T))\cap ch(X_T(t),T)$ such that $\gamma=(\gamma_1,\gamma_2,\gamma_3),$
where the first component of $\gamma$ is given by 
 $
 \gamma_1(\theta)=X_T(t)+(\theta-T)\frac{X_T(t)}{T-t_+(X_T(t)-,t)}, \ \  \mbox{for}\ \theta\in [t_+(X_T(t)-,T),T].
$
Let $D_1=$ set of discontinuities of the mapping $t\mapsto X_T(t)$. Then $D_1$ is countable and as in Step 1, Lemma 4.10 of \cite{Kyoto}, it 
can be shown that for all $t\notin D_1.$ The R-H condition holds, i.e,. 
$f(u(0+,t))=g(u(0-,t))\ \mbox{for all}\ t\notin D_1 $. Now we state the following claim which will conclude the Lemma. 

\noindent\textbf{Claim 1}: $t\mapsto X_T(t)$ is a continuous function. 

Suppose not, then there exists $t_0\in [0,T]$ such that
\begin{equation}\label{x0-x3}
 x_0=X_T(t_0+)<X_T(t_0-)=x_1.
\end{equation}
%If $t_+(x,T)$ is not strictly decreasing then there exist
% $x_1,x_2\in (0,R_1(T))$ with $x_1<x_2,$  such that $t_+(x_1,T)=t_+(x_2,T)=t_0$ (say)$\in (0,T).$
%Define 
%\begin{eqnarray}\label{x0-x3}
% \begin{array}{llll}
 % x_0=\inf\limits_{x\leq x_1} \{x: t_+(x,T)=t_0\}\ \mbox{and}\ x_3=\sup\limits_{x\geq x_2} \{x: t_+(x,T)=t_0\}.
 %\end{array}
%\end{eqnarray}
Due to the fact that characteristics do not intersect properly, we have  for all $x\in(x_0,x_1)$, $t_+(x,T)=t_0.$ 
Observe that the straight line $\alpha(t):=(t-t_0)\left(\frac{x_1}{T-t_0}\right)$ is a characteristic curve. 
Since characteristics do not intersect properly, we conclude
$R_1(t)>0, \ \mbox{for all} \ t\in (t_0,T],$ hence by entropy condition (\ref{entropy-condition}), we have 
$ L_1(t)=0\ \mbox{for all}\ t\in(t_0,T].$

\noindent\textbf{Claim 2}: There exists $\epsilon>0$ such that for all $t\in[t_0-\epsilon,t_0]$, $R_1(t)>0$.\\
If not, then there exists a sequence $\{t_k\}_{k=1}^\infty$ with $t_{k+1}>t_k$ such that $R_1(t_k)=0$ and
$\lim\limits_{k\rightarrow \infty} t_k=t_0.$ Since $R_1(t_k)=0$, therefore there exists a
sequence $\{y_k\}_{k=1}^\infty$ with $y_k\geq 0$ such that 
$
\beta_k(t):=(t-t_k)(-\frac{y_k}{t_k}) 
$
is a characteristic curve. 
The function $t\mapsto y_{+,0}(t)$ is non decreasing hence we conclude $y_{k+1}\geq y_k$, for all $k$. Then  $\{y_k\}_{k=1}^\infty$
is a non decreasing sequence and bounded below by $0$, therefore converges to some $y_0\geq 0$ (say). 
By the definition of characteristics curve we conclude
\begin{equation}\label{charseq}
 v(0,t_k)=v_0(y_k)+t_kf^*\left(-\frac{y_k}{t_k}\right).
\end{equation}
Since $v$ is uniformly Lipschitz and $y_k,t_k$ converges to $y_0,t_0$ respectively, we pass to the limit in the equation (\ref{charseq})
 to obtain $
   v(0,t_0)=v_0(y_0)+t_0f^*\left(-\frac{y_0}{t_0}\right).$
 Which proves that the straight line $\beta(t):=(t-t_0)(-\frac{y_0}{t_0})$, is a characteristic curve.
Define the straight line $\gamma(t):=x_1+(t-T)\left(\frac{\alpha(T)-\beta(0)}{T}\right)$.
By using the fact that $\alpha,\beta$ are characteristics curves and  $f^*$ is a strict convex function, we obtain
\begin{eqnarray*}
 \begin{array}{llll}
  v(\alpha(T),T)&\leq& v_0(\gamma(0))+Tf^*\left(\frac{\alpha(T)-\beta(0)}{T}\right)
  = v_0(\beta(0))+Tf^*\left(\frac{\alpha(T)-\beta(0)}{T}\right)\\
  &<&v_0(\beta(0)) +t_0f^*(\dot{\beta})+(T-t_0)f^*\left(\dot{\alpha}\right)
  = v(0,t_0)+(T-t_0)f^*\left(\dot{\alpha}\right)\\
  &=&v(\alpha(T),T),
 \end{array}
\end{eqnarray*}
 which is a contradiction. Hence the claim 2. 
\begin{figure}[ht]\label{Fig2}
        \centering
        \def\svgwidth{0.8\textwidth}
        \begingroup
    \setlength{\unitlength}{\svgwidth}
  \begin{picture}(1,0.55363752)%
    \put(0,0.1){\includegraphics[width=0.7\textwidth]{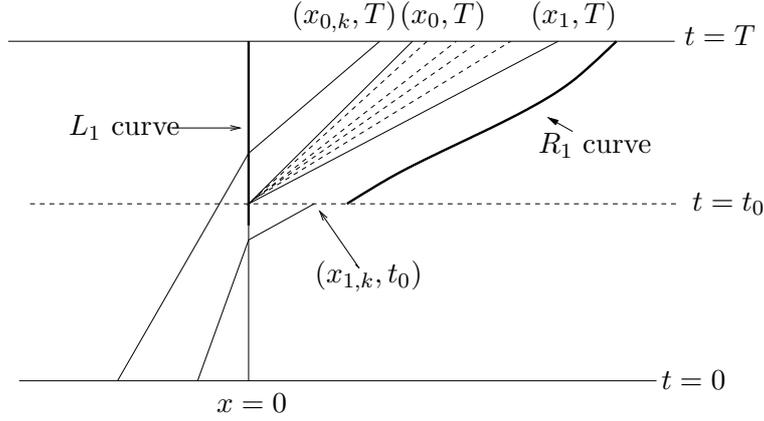}}%
    \put(0.69,0.4){\color[rgb]{0,0,0}\makebox(0,0)[lb]{\smash{$R_1$ curve}}}%
    \put(0.08,0.423){\color[rgb]{0,0,0}\makebox(0,0)[lb]{\smash{$L_1$ curve}}}%
    \put(0.85,0.09){\color[rgb]{0,0,0}\makebox(0,0)[lb]{\smash{$t=0$}}}%
        \put(0.89,0.3249){\color[rgb]{0,0,0}\makebox(0,0)[lb]{\smash{$t=t_0$}}}%
        \put(0.272,0.06){\color[rgb]{0,0,0}\makebox(0,0)[lb]{\smash{$x=0$}}}%
    \put(0.88,0.5409){\color[rgb]{0,0,0}\makebox(0,0)[lb]{\smash{$t=T$}}}%
                \put(0.37,0.57){\color[rgb]{0,0,0}\makebox(0,0)[lb]{\smash{$(x_{0,k},T)$}}}%
            \put(0.51,0.57){\color[rgb]{0,0,0}\makebox(0,0)[lb]{\smash{$(x_0,T)$}}}%
                        \put(0.68,0.57){\color[rgb]{0,0,0}\makebox(0,0)[lb]{\smash{$(x_1,T)$}}}%
    \put(0.4,0.23){\color[rgb]{0,0,0}\makebox(0,0)[lb]{\smash{$(x_{1,k},t_0)$}}}%
  \end{picture}%
\endgroup
        \caption{An illustration of the Lemma \ref{norarefaction}}
        \label{fig:qcsets}
\end{figure}
 Therefore by entropy condition (\ref{entropy-condition}) and claim 2, we obtain 
\begin{equation}\label{L1=0}
 \mbox{for all}\  t\in[t_0-\epsilon,T], L_1(t)=0 \ \mbox{and}\ R_1(t)>0.
\end{equation}
Due to R-H condition, (\ref{x0-x3}) and  (\ref{L1=0}),  we can consider the sequences
$\{x_{0,k}\}_{k=1}^\infty$, $\{x_{1,k}\}_{k=1}^\infty,$
$ \{t_k\}_{k=1}^\infty$, $\{\bar{t}_k\}_{k=1}^\infty$ with $\lim\limits_{k\rightarrow \infty} x_{0,k}=x_0,$ $\lim\limits_{k\rightarrow \infty} x_{1,k}=0,$
$\lim\limits_{k\rightarrow \infty} t_k=t_0, \lim\limits_{k\rightarrow \infty} \bar{t}_k=t_0$ such that for all $k\in\mathbb{N},$
    %  \begin{eqnarray}
     %               \begin{array}{ll}
 $x_{0,k+1}\geq x_{0,k}, x_{1,k+1}<x_{1,k}$, $t_{k+1}\leq t_k, \bar{t}_{k+1}>\bar{t}_k$, 
  $f(u(0+,t_k))=g(u(0-,t_k))$,   $f(u(0+,\bar{t}_k)) = g(u(0-,\bar{t}_k)),$
                    $\eta_k(t):=(t-t_k)\left(\frac{x_{0,k}}{T-t_k}\right)$ and 
          $\bar{\eta}_k(t):=(t-\bar{t}_k)\left(\frac{x_{1,k}}{t_0-\bar{t}_k}\right)$ are characteristics curves.
                                        Note that the slopes characteristics curves $\eta_k$ and $\bar{\eta}_k$
 converges to the slopes  characteristics curves $\eta(t):=(t-t_0)\left(\frac{x_{0}}{T-t_0}\right)\ 
 \mbox{and}\ \bar{\eta}(t):=(t-{t}_0)\left(\frac{x_{1}}{T-{t}_0}\right)$ respectively, which proves 
 \begin{equation}\label{limu0}
  \lim\limits_{k\rightarrow \infty} u(0+,t_k)=u(0+,t_0+)\mbox{(say)}\ \mbox{and}\  \lim\limits_{k\rightarrow \infty} u(0+,\bar{t}_k)=u(0+,t_0-)\mbox{(say)}.
 \end{equation}
On the other hand, $\eta(t)$ and  $\bar{\eta}(t)$ are characteristics curves, hence 
 $u(0+,t_0+)=(f^\prime)^{-1}(\frac{x_0}{T-t_0})$ and $u(0+,t_0-)=(f^\prime)^{-1}(\frac{x_1}{T-t_0})$. 
 Since $x_0<x_1,$ clearly 
\begin{equation}\label{ineu0}
 u(0+,t_0+)<u(0+,t_0-).
\end{equation}
Therefore from (\ref{limu0}) and (\ref{ineu0}), there exists a $\delta_1>0$ and $m\in\mathbb{N}$ such that for all $k>m,$
\begin{equation}\label{delta1ine}
 u(0+,\bar{t}_k)-u(0+,t_k)>\delta_1.
\end{equation}
Again by R-H condition (\ref{RH-condition}) and (\ref{delta1ine}), there exists a $\delta_2>0$ such that for all $k>m,$
  \begin{equation}\label{delta2ine}
 g^\prime(u(0+,\bar{t}_k))-g^\prime(u(0+,t_k))>\delta_2.
\end{equation}
Due to the fact that $L_1(t)=0$ in the neighborhood of $t_0$, by using explicit formulas there exists sequences  $\{y_{k}\}_{k=1}^\infty, 
\{\bar{y}_{k}\}_{k=1}^\infty$ such that $g^\prime (u(0-,t_k))=-\frac{y_k}{t_k}$ and $g^\prime (u(0-,\bar{t}_k))=-\frac{\bar{y}_k}{\bar{t}_k}$.
The function $t\mapsto y_{-,0}(t)$ is non increasing hence we conclude $y_{k+1}\geq y_k$, $\bar{y}_{k+1}\leq \bar{y}_k$,for all $k$. 
Since the sequences $\{y_{k}\}_{k=1}^\infty, 
\{\bar{y}_{k}\}_{k=1}^\infty$ are monotonic, bounded and due to the fact that  characteristics do not intersect properly, we conclude
 \begin{equation}\label{y0<y0}
  \lim\limits_{k\rightarrow \infty} y_k =y_0 \mbox{(say)}\leq  \lim\limits_{k\rightarrow \infty} \bar{y}_k=\bar{y}_0\mbox{(say)}.
 \end{equation}
Exploiting the explicit formula we obtain 
\begin{eqnarray}\label{diffex}
 \begin{array}{llllllllll}
g^\prime(u(0+,\bar{t}_k))-g^\prime(u(0+,t_k))%&=&\displaystyle -\frac{\bar{y}_k}{\bar{t}_k}+\frac{y_k}{t_k}\\
&=&\displaystyle \frac{-\bar{y}_kt_k+y_k\bar{t}_k}{\bar{t}_kt_k}.
 \end{array}
\end{eqnarray}
As $\lim\limits_{k\rightarrow \infty}t_k=t_0, \lim\limits_{k\rightarrow \infty}\bar{t}_k=t_0$ and (\ref{y0<y0}), 
the right hand side of (\ref{diffex}) converges to some non positive number but due to (\ref{delta2ine}) the left hand side of (\ref{diffex})
remain strictly positive, which is a contradiction. This proves claim 1. Therefore $x\mapsto t_+(x,t)$ is strictly decreasing function in  $(0,R_1(t))$.
Similarly one can prove that $x\mapsto t_-(x,t)$ is a strictly increasing function in  $(L_1(t),0)$. Hence the Lemma. 
\end{proof}
\begin{remark}
 There are no rarefaction start from the interface at any positive time.
\end{remark}
%\begin{lemma}
 %Let $h$ be a strictly increasing function. Let $t(x)$ be a function defined on $[0,R_1(T)].$ Define the function $y$ by
 %$-y(t(x))=t(x)h\left(\frac{x}{T-t(x)}\right)$. Suppose $x\mapsto y(t(x))$ is a non decreasing function. Then $x\mapsto t(x)$ is a strictly 
 %increasing function.
%\end{lemma}
%\begin{proof}
% Let $0\leq x_1\leq x_2.$ Suppose $t(x_1)<t(x_2),$ then $T-t(x_2)<T-t(x_1)$ and $\frac{1}{T-t(x_1)}<\frac{1}{T-t(x_2)}.$ Hence 
%$\frac{x_1}{T-t(x_2)}< \frac{x_2}{T-t(x_2)}$ and then $h\left(\frac{x_1}{T-t(x_2)}\right)< h\left(\frac{x_2}{T-t(x_2)}\right)$, which proves 
%$y(t(x_2))<y(t(x_1))\leq y(t(x_2)),$ a contradiction. Therefore $x\mapsto t(x)$ is an
 %increasing function.\\
 %\underline{Claim}: $x\mapsto t(x)$ is a strictly increasing function.\\
 %If not, then there exist $0\leq x_1<x_2$ such that $t_0=t(x_1)=t(x_2).$ Whence $t(x)=t_0\ \forall\ x\in [x_1,x_2]$. Let  $x\in [x_1,x_2]$, then 
 %$-y(t(x))=t_0h\left(\frac{x}{T-t_0}\right)$. By using the monotonicity of $y,h$, we obtain the following contradiction.
 %\begin{eqnarray*}
 % -y(t(x))=t_0h\left(\frac{x_1}{T-t_0}\right)<t_0h\left(\frac{x_2}{T-t_0}\right)=-y(t(x))
 %\end{eqnarray*}
%Hence the claim. This proves the lemma.
%\end{proof}
\subsection{Explicit formulas connecting the interface} The following two lemmas explains how the solution of (\ref{conlaw-equation}) at time $t=T$ 
connected to $t=0$ via  characteristics through the interface.
\begin{lemma}
 Let $T>0$ and denote $t_\pm(x,T)=t_\pm(x).$ Then 
 \begin{enumerate}
  \item 
 For a.e.  $x\in[0,R_1(T))$, we have  $\displaystyle-\frac{y_{-,0}(t_+(x))}{t_+(x)}=h_+\left(\frac{x}{T-t_+(x)}\right).$
  \item For a.e. $x\in(L_1(T),0]$, we have $\displaystyle  - \frac{y_{+,0}(t_-(x))}{t_-(x)}=h_-\left(\frac{x}{T-t_-(x)}\right).$
    \end{enumerate}
\end{lemma}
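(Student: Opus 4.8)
The plan is to prove (1) directly from the explicit formula and the Rankine--Hugoniot condition at the interface, and to obtain (2) by the reflection $x\mapsto -x$, $f\leftrightarrow g$, $R_1\leftrightarrow L_1$, $y_{-,0}\leftrightarrow y_{+,0}$, $t_+\leftrightarrow t_-$, $h_+\leftrightarrow h_-$ (with the branch $f_-$ replacing $g_+$). First I would dispose of the case $R_1(T)=0$, where $[0,R_1(T))$ is empty and there is nothing to prove; when $R_1(T)>0$, Theorem~\ref{AG1}(7) places us in case~(i), so $L_1(T)=0$ and $R_1(t)>0$ for all $t\in(t_+(R_1(T)-,T),T)$. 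I then fix $x\in(0,R_1(T))$ and set $s:=t_+(x)$; since $t_+(\cdot,T)$ is strictly decreasing on $(0,R_1(T))$ by Lemma~\ref{norarefaction}, one has $s\in(t_+(R_1(T)-,T),T)\subset V_+$, so $y_{-,0}(s)$ is defined and, by Theorem~\ref{AG1}(6), $-y_{-,0}(s)/s=g'(u(0-,s))$ for $s\notin D_+$.

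The core step is to rewrite the right-hand side of (1). By Theorem~\ref{AG1}(4) the first segment of a characteristic through $(x,T)$ is the line joining $(0,s)$ to $(x,T)$ inside $\{x\ge 0\}$, and applying Theorem~\ref{AG1}(4) again at an interior point $(z,\sigma)$ of this segment (which lies to the left of $R_1(\sigma)$ and on the same line) shows $f'(u(z,\sigma))=x/(T-s)$ there; letting $(z,\sigma)\to(0,s)$ gives, for a.e.\ $x$, $u(0+,s)=(f')^{-1}(x/(T-s))$, hence $f\circ(f')^{-1}(x/(T-s))=f(u(0+,s))$. The Rankine--Hugoniot relation (\ref{RH-condition}) from Theorem~\ref{AG1}(2) then yields $f(u(0+,s))=g(u(0-,s))$. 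Finally, since $g'(u(0-,s))=-y_{-,0}(s)/s\ge 0$ (because $y_{-,0}\le 0$), we have $u(0-,s)\ge\theta_g$, so $u(0-,s)$ lies in the domain of $g_+$ and $g_+^{-1}\circ f\circ(f')^{-1}(x/(T-s))=g_+^{-1}(g(u(0-,s)))=u(0-,s)$; applying $g'$ and the definition of $h_+$ gives $h_+(x/(T-s))=g'(u(0-,s))=-y_{-,0}(s)/s$, which is (1). Along the way one records $f\circ(f')^{-1}(x/(T-s))=g(u(0-,s))\ge g(\theta_g)$, so that $x/(T-s)\in I_+$ and $h_+$ is indeed evaluated inside its domain.

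The hard part will be the measure-theoretic bookkeeping concealed in the substitution $t=s=t_+(x)$: the Rankine--Hugoniot identity, the formula $g'(u(0-,t))=-y_{-,0}(t)/t$, and the existence of the trace $u(0+,t)$ all hold only for $t$ outside a null set, and one must verify that these exceptional sets pull back to null sets of $x$. The countable obstruction $D_+$ is harmless because $t_+(\cdot,T)$ is injective (Lemma~\ref{norarefaction}), so its preimage is countable; for the genuine null set on which (\ref{RH-condition}) could fail I would either exploit the finer regularity of $t_+(x)=T-x/f'(u(x,T))$ (monotonicity together with the Lipschitz bounds on $R_1$ and on the value function from Theorem~\ref{AG1}) to control the pullback, or, more robustly, replace the global a.e.\ Rankine--Hugoniot statement by the localized refraction identity $f(u(0+,s))=g(u(0-,s))$ read off from the optimality of the characteristic at its corner $(0,s)$, which holds pointwise whenever that characteristic actually enters $\{x<0\}$, and then argue separately that the exceptional set of $x$ is negligible. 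Statement (2) is the mirror image, with $h_-$, $L_1$, $y_{+,0}$, $t_-$ and the branch $f_-$ in place of $h_+$, $R_1$, $y_{-,0}$, $t_+$ and $g_+$.
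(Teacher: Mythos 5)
Your proposal is correct and follows essentially the same route as the paper: identify $f'(u(0+,t_+(x)))=x/(T-t_+(x))$ from the explicit formula, pass through the interface via the Rankine--Hugoniot relation, and unwind the definition of $h_+$, with (2) by symmetry. The measure-theoretic worry you flag at the end is already resolved by Theorem \ref{AG1}(5), which gives both $f(u(0+,t))=g(u(0-,t))$ and $g'(u(0-,t))=-y_{-,0}(t)/t$ for every $t\in V_+\setminus D_+$ with $D_+$ countable, so the only exceptional set of $x$ is the countable set $t_+^{-1}(D_+)\cup\{\text{discontinuities of }t_+\}$ (using that $t_+$ is strictly decreasing), and no finer regularity or localized refraction argument is needed.
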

\begin{proof}
 It is enough to prove (1), (2)  follows in the same direction. Let $R_1(T)>0$. Then from (6) of Theorem \ref{AG1}, $R(t)>0$ for
 $t\in (t_+(R(T))-, T)$ and hence from (5) of Theorem \ref{AG1}, $y_{-,0}$ is well defined on $(t_+(R(T)),T).$ Again from (2) of Theorem \ref{AG1},
 $t_+$ is a strictly decreasing function, hence the set 
\begin{equation*}
 E_+=\{t_+^{-1}(D_+)\}\cup \{\mbox{points of discontinuities of } t_+ \}
\end{equation*}
is a countable set. Now from (5) of  Theorem \ref{AG1}, if $x\notin E_+,$ then $t_+(x)\notin D_+$ and hence 
\begin{eqnarray}
 f(u(0+,t_+(x)))=g(u(0-,t_+(x))),\ g^\prime(u(0-,t_+(x)))=-\frac{y_{-,0}(t_+(x))}{t_+(x)}.\label{i}
\end{eqnarray}
From (4) and (5) of   Theorem \ref{AG1}, we have $f^\prime(u(x,T))=\frac{x}{T-t_+(x)}$ for a.e. $x\in [0,R_1(T)]$. 
This implies at the point of continuity of $t_+$, we have 
\begin{equation}
 f^\prime(u(x,T))=f^\prime(u(0+,t_+(x))).\label{iv}
\end{equation}
Therefore from \eqref{i}-\eqref{iv}, for $x\notin E_+$, we  conclude the proof of (1).
\end{proof}

\begin{lemma}\label{simplelemma}
 Let $\rho:[\alpha,\beta]\subset (0,\infty) \rightarrow (-\infty,0)$ be a non decreasing function.
\begin{enumerate}
 \item  Let $t: [\alpha,\beta] \rightarrow [0,T]$ be a function 
 such that 
 \begin{equation}\label{RH1}
  -\frac{\rho(x)}{t(x)}=h_+\left(\frac{x}{T-t(x)}\right) \ \mbox{a.e.}\ x\in [\alpha,\beta].
 \end{equation}
Then
 $x\mapsto t(x)$ is a strictly decreasing function.
 \item  For $i=1,2$, let  $t_i: [\alpha,\beta] \rightarrow [0,T]$ be two functions
 such that 
 \begin{equation}\label{RH1-2}
  -\frac{\rho(x)}{t_i(x)}=h_+\left(\frac{x}{T-t_i(x)}\right) \ \mbox{a.e.}\ x\in [\alpha,\beta].
 \end{equation}
 Then $t_1(x)=t_2(x), \ \mbox{a.e.}\ x\in [\alpha,\beta].$
\end{enumerate}
\begin{proof}
 Let $0<x_1<x_2$ and (\ref{RH1}) holds at $x_1$ and $x_2$. Suppose $t(x_1)\leq t(x_2).$ Then
 $\frac{x_1}{T-t(x_1)}\leq \frac{x_1}{T-t(x_2)}<\frac{x_2}{T-t(x_2)}$. Hence 
 $$-\rho(x_1)=t(x_1)h\left(\frac{x_1}{T-t(x_1)}\right)<t(x_2)h\left(\frac{x_2}{T-t(x_2)}\right)=-\rho(x_2)\leq -\rho(x_1),$$
which is a contradiction. This proves (1). Proof of (2) is immediate.
 \end{proof}
 \end{lemma}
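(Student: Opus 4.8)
The plan is to deduce both parts from a single monotonicity principle: for a fixed $x\in[\alpha,\beta]$ the Rankine--Hugoniot relation $-\rho(x)/s=h_+(x/(T-s))$ has at most one solution $s\in(0,T)$, and as $x$ increases this solution can only decrease. Both facts are forced by the \emph{strict} monotonicity of $h_+$ established above, together with the hypothesis that $\rho$ is non-decreasing and strictly negative. As a preliminary remark I would first note that at any $x\in[\alpha,\beta]$ where the defining relation holds one has $0<t(x)<T$: since $-\rho(x)>0$, the choice $t(x)=0$ would make the left-hand side infinite while $h_+$ is finite on $I_+$, and $t(x)=T$ would make the argument $x/(T-t(x))$ of $h_+$ blow up because $x>0$. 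Hence every fraction written below is legitimate, and in fact $h_+(x/(T-t(x)))>0$, so $-\rho(x)=t(x)\,h_+(x/(T-t(x)))$ with all three factors strictly positive.

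For part (1) I would argue by contradiction. Fix $x_1<x_2$ in $[\alpha,\beta]$ at which (\ref{RH1}) holds and suppose $t(x_1)\le t(x_2)$. Chaining inequalities,
\[
\frac{x_1}{T-t(x_1)}\ \le\ \frac{x_1}{T-t(x_2)}\ <\ \frac{x_2}{T-t(x_2)},
\]
where the first step uses $t(x_1)\le t(x_2)<T$ and the second uses $x_1<x_2$. Applying the strictly increasing map $h_+$ and then multiplying by the positive numbers $t(x_1)\le t(x_2)$ yields
\[
-\rho(x_1)=t(x_1)\,h_+\!\left(\frac{x_1}{T-t(x_1)}\right)\ <\ t(x_2)\,h_+\!\left(\frac{x_2}{T-t(x_2)}\right)=-\rho(x_2),
\]
that is $\rho(x_1)>\rho(x_2)$, contradicting monotonicity of $\rho$. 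Therefore $t(x_1)>t(x_2)$, so $x\mapsto t(x)$ is strictly decreasing on the full-measure set on which (\ref{RH1}) holds. Note this chain never evaluates $h_+$ outside the two arguments supplied by the hypothesis, so the domain $I_+$ causes no trouble.

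For part (2) I would fix $x\in[\alpha,\beta]$ at which both relations in (\ref{RH1-2}) hold, which is almost every $x$, and show $t_1(x)=t_2(x)$. If not, say $t_1(x)<t_2(x)$; then $x/(T-t_1(x))<x/(T-t_2(x))$ since $x>0$, so strict monotonicity of $h_+$ gives
\[
-\frac{\rho(x)}{t_1(x)}=h_+\!\left(\frac{x}{T-t_1(x)}\right)\ <\ h_+\!\left(\frac{x}{T-t_2(x)}\right)=-\frac{\rho(x)}{t_2(x)},
\]
and dividing out $-\rho(x)>0$ forces $t_1(x)>t_2(x)$, a contradiction. Hence $t_1(x)=t_2(x)$ for a.e.\ $x$.

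I do not expect a genuine obstacle: once the strict monotonicity of $h_+$ is in hand the argument is elementary. The only points demanding care are the preliminary verification that $t(x)\in(0,T)$ (without which the quotients and the composition with $h_+$ are meaningless), ordering the inequalities in part (1) so that $h_+$ is evaluated only at points known to lie in $I_+$, and stating the conclusions only on the full-measure sets where the a.e.-relations (\ref{RH1}), (\ref{RH1-2}) actually hold.
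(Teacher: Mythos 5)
Your argument is correct and coincides with the paper's own proof: the same contradiction for part (1) via the chained inequality and strict monotonicity of $h_+$, with part (2) (which the paper dismisses as immediate) spelled out in the natural way. The preliminary observation that $t(x)\in(0,T)$ is a harmless and sensible addition.
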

\subsection{Backward wave analysis} The following lemmas proves the existence of possible functions $t_+, u_0$, given $\rho$.
\begin{lemma}\label{discontinuouslemma}
 Let $x_0>0, T>t_1>t_2>0$. Define $\rho_1,\rho_2\in \mathbb{R}$ such that 
 \begin{eqnarray}
  -\frac{\rho_i}{t_i}=h_+\left(\frac{x_0}{T-t_i}\right).
 \end{eqnarray}
Suppose $\rho_1<\rho_2<0$, then there exists a  solution $u\in L^\infty(\mathbb{R}\times [0,T]) $ for (\ref{conlaw-equation}). \end{lemma}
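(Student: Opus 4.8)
In substance the lemma says that the shock configuration encoded by $(x_0,t_1,t_2,\rho_1,\rho_2)$ is attainable, so the plan is to exhibit an explicit datum $u_0\in L^\infty(\mathbb{R})$, let Theorem \ref{AG1} provide the solution $u$, and then check that the characteristic picture attached to $u_0$ is the prescribed one — in particular $t_+(x_0-,T)=t_1$, $t_+(x_0+,T)=t_2$ and $g'(u(0-,t_i))=-\rho_i/t_i$. First, since $x_0>0$ and $T>t_1>t_2$, set $a_i=x_0/(T-t_i)$, so $a_1>a_2>0$; put $p_i={f'}^{-1}(a_i)>\theta_f$ and $q_i=g_+^{-1}(f(p_i))$. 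From the definition of $h_+$ and the hypothesis $-\rho_i/t_i=h_+(a_i)$ one reads off $f(p_i)=g(q_i)$, $g'(q_i)=-\rho_i/t_i>0$ (hence $q_i>\theta_g$), $p_1>p_2$, $q_1>q_2$ and $\rho_i=-t_i g'(q_i)$; moreover, since $h_+$ is strictly increasing, the map $t\mapsto -t\,h_+(x_0/(T-t))$ is continuous and strictly decreasing on $[t_2,t_1]$, hence a bijection onto $[\rho_1,\rho_2]$ whose inverse I denote $y\mapsto t(y)$, so that $-y/t(y)=h_+(x_0/(T-t(y)))$ and $t(\rho_i)=t_i$.

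Next I take
\[
u_0(x)=
\begin{cases}
q_1,& x\le\rho_1,\\
{g'}^{-1}\!\big(-x/t(x)\big),& \rho_1<x<\rho_2,\\
q_2,& \rho_2\le x<0,\\
p_2,& x\ge0,
\end{cases}
\]
which lies in $L^\infty(\mathbb{R})$: on $(\rho_1,\rho_2)$ the argument $-x/t(x)=h_+(x_0/(T-t(x)))$ stays in the compact set $[h_+(a_2),h_+(a_1)]\subset(0,\infty)$, so the middle branch takes values in $[q_2,q_1]$ (note $u_0$ is \emph{decreasing} there). I then resolve $u_0$ by characteristics. In $\{x<0\}$: the parallel families carrying $q_1$ (from $\{x\le\rho_1\}$) and $q_2$ (from $[\rho_2,0)$), together with the segments joining $(y,0)$ to $(0,t(y))$, $y\in(\rho_1,\rho_2)$, carrying ${g'}^{-1}(-y/t(y))$, which meet the interface at the times $t(y)$, increasing from $t_2$ to $t_1$ as $y$ decreases from $\rho_2$ to $\rho_1$. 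In $\{x>0\}$: the interface emits $p_2$ for $t<t_2$, emits ${f'}^{-1}(x_0/(T-t))$ for $t\in(t_2,t_1)$ — that is, along the segment from $(0,t)$ to $(x_0,T)$ — and emits $p_1$ for $t>t_1$, and this, with the family carrying $p_2$ from $\{x\ge0\}$, fills the quarter–plane. By construction $f(u(0+,t))=g(u(0-,t))$ for every $t$ (this is exactly the identity built into $h_+$), so $(\ref{RH-condition})$ holds; $f'(u(0+,t))>0$ and $g'(u(0-,t))>0$ everywhere, so $(\ref{entropy-condition})$ holds vacuously; and the limiting lines at $t=t_2$ and at $t=t_1$ coincide, so the families patch with no gap.

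The point on which everything turns is that no two of these characteristics meet for $t<T$, which is elementary: (i) two segments from $(y,0)$ to $(0,t)$ with $y$ strictly decreasing in $t$ are disjoint in the open quarter–plane (a one-line computation with the two line equations, as in the proof of Lemma \ref{simplelemma}), which takes care of the focusing family on its negative side; (ii) the positive–side segments issued during $(t_2,t_1)$ are distinct lines through the common point $(x_0,T)$, so they meet only there — the incipient shock at $(x_0,T)$; (iii) each of the $q_1$–, $q_2$– and $p_2$–families is internally parallel and, by (i)–(ii), the monotonicity of $h_+$ and the explicit slopes of the first step, stays on the correct side of the remaining ones (for instance a $q_1$–characteristic issued at $x\le\rho_1$ is still at $x<0$ at the absorption time of any left–half segment issued before $t_1$, hence never catches it). Therefore the construction produces a weak solution $u\in L^\infty(\mathbb{R}\times[0,T])$ which, in each quarter–plane, is the Lax–Oleinik solution — its only discontinuity being the focusing of the $(t_2,t_1)$–family at the single point $(x_0,T)$, negligible in $t$ — so it satisfies $(\ref{Kruzkov2})$ there as well as $(\ref{RH-condition})$–$(\ref{entropy-condition})$; by the uniqueness theorem of \cite{Kyoto} it then coincides with the Hamilton–Jacobi solution of Theorem \ref{AG1}. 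Reading that solution off the construction gives $t_+(x_0-,T)=t_1$, $t_+(x_0+,T)=t_2$, $u(x_0-,T)=p_1$, $u(x_0+,T)=p_2$ and $y_{-,0}(t_i)=\rho_i$, which is (more than) the asserted existence.

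The hard part, I expect, is twofold: (a) verifying that the decreasing datum on $(\rho_1,\rho_2)$ does \emph{not} generate a shock inside $\{x<0\}$ before time $T$ — exactly where the discontinuous interface is indispensable, since all those characteristics are absorbed at $x=0$ by time $t_1$, i.e.\ before they would otherwise cross; and (b) the bookkeeping that the globally assembled characteristic field covers $\mathbb{R}\times[0,T]$ without gaps or overlaps, so that the constructed $u$ is a bona fide admissible solution and Theorem \ref{AG1} together with uniqueness may be invoked. Everything else reduces to the two-line planar estimates of the previous paragraph and the strict monotonicity of $h_+$.
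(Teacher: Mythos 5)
Your construction is sound in substance, but it takes a genuinely different route from the paper's. The paper resolves the same data by a \emph{single shock}: it sets $a_i={f^\prime}^{-1}\left(\frac{x_0}{T-t_i}\right)$ and $b_i=g_+^{-1}f(a_i)$ (your $p_i,q_i$), traces the Rankine--Hugoniot line of slope $s_2=\frac{f(a_1)-f(a_2)}{a_1-a_2}$ backward from $(x_0,T)$ to the interface at a time $t_3\in(t_2,t_1)$ (strict convexity gives $f^\prime(a_2)<s_2<f^\prime(a_1)$, hence the inclusion), continues it into $x<0$ with the $g$-shock speed $s_1=\frac{g(b_1)-g(b_2)}{b_1-b_2}$ down to a point $(\rho_3,0)$ with $\rho_3\in(\rho_1,\rho_2)$, and takes the two-state datum $u_0=b_1\mathbf{1}_{x<\rho_3}+b_2\mathbf{1}_{x>\rho_3}$; the resulting solution is piecewise constant with four states and one admissible shock crossing the interface, so the verifications (Rankine--Hugoniot at $x=0$, interface entropy, Lax condition) are immediate. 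You instead fill the triangle between the two backward characteristics by a compressive fan: a continuous decreasing profile on $(\rho_1,\rho_2)$ whose characteristics are absorbed at the interface during $(t_2,t_1)$ and re-emitted so as to focus exactly at $(x_0,T)$. This is correct and has the nice feature of producing a solution with no shock before $t=T$, but it costs you the no-crossing bookkeeping, an appeal to the uniqueness theorem of \cite{Kyoto}, and the standard-but-not-free fact that a bounded continuous function constant along a non-crossing genuine characteristic field (your $t(\cdot)$ is only monotone continuous, since $f,g$ are merely $C^1$) is the entropy solution in each quarter plane -- points you flag but do not fully discharge. One further caveat: the way this lemma is actually consumed in Step 2 of Lemma \ref{BClemma} is through the intermediate point $S=\rho_3\in(\rho_1,\rho_2)$ and the piecewise-constant structure (the paper insists that ``the shocks are discrete''), which the paper's proof delivers and yours does not; with your variant one would have to modify Steps 2--3 of Lemma \ref{BClemma} to splice in the continuous profile instead of a jump at $S$, which looks feasible but departs from the piecewise-constant discretization used there.
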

%Also there exist constants $a_1,a_2,b_1,b_2,s_1,s_2,t_3,\rho_3$ such that 
 %the solution $u$  is continuous in the interior of $P\cap (\mathbb{R}_-\times \mathbb{R}_+)$,
%$P\cap (\mathbb{R}_+\times \mathbb{R}_+), Q\cap (\mathbb{R}_-\times \mathbb{R}_+)$ and 
%$Q\cap (\mathbb{R}_+\times \mathbb{R}_+),$

%where  $P,Q$ defined by 
%\begin{eqnarray*}
 %P=\{(x,t) : 0\geq s_1t+\rho_3 >x>g^\p(b_1)t+\h_1, 0\leq f^\p(a_1)(t-t_3)<x<s_2(t-t_3)\}\\
  %Q=\{(x,t) : s_1t+\h_3 <x<g^\p(b_2)t+\h_2\leq 0 ,  f^\p(a_2)(t-t_2)>x>s_2(t-t_3) \geq 0\}.
%\end{eqnarray*}
 \begin{proof}
 Let us denote $a_1,a_2$, such that $\frac{x_0}{T-t_i}=f^\prime(a_i)$, for $i=1,2.$ Then by strict convexity of $f$, one obtains $a_1>a_2.$ 
 Let us denote $b_1,b_2$, such that $b_i=g_+^{-1}f_+(a_1)$, for $i=1,2.$ Consider the line $x-x_0=\frac{f(a_1)-f(a_2)}{a_1-a_2}(t-T)$, this 
 line hits the $x=0$ at time $t=T-x_0s_2(t-T)=t_3$ (say), where $s_2=\frac{f(a_1)-f(a_2)}{a_1-a_2}$. Again by strict convexity $t_1>t_3>t_2.$
 Let us define the initial data $u_0$, by 
 $
 u_0(x)= b_1\mathbf{1}_{x<\rho_3}+b_2\mathbf{1}_{x>\rho_3}.$
\begin{figure}[ht]\label{Fig3}
        \centering
        \def\svgwidth{0.8\textwidth}
        \begingroup
    \setlength{\unitlength}{\svgwidth}
  \begin{picture}(1,0.55363752)%
    \put(0,0.1){\includegraphics[width=0.7\textwidth]{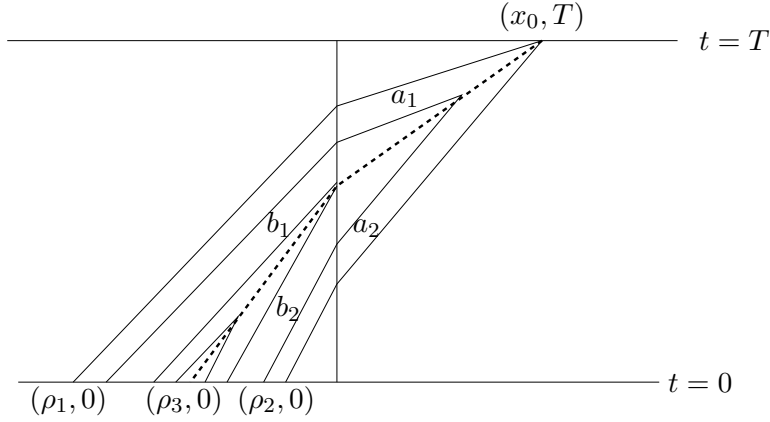}}%
    \put(0.64,0.573){\color[rgb]{0,0,0}\makebox(0,0)[lb]{\smash{$(x_0,T)$}}}%
        \put(0.9,0.539){\color[rgb]{0,0,0}\makebox(0,0)[lb]{\smash{$t=T$}}}%
                \put(0.86,0.09){\color[rgb]{0,0,0}\makebox(0,0)[lb]{\smash{$t=0$}}}%
                                \put(0.03,0.07){\color[rgb]{0,0,0}\makebox(0,0)[lb]{\smash{$(\rho_1,0)$}}}%
                \put(0.3,0.07){\color[rgb]{0,0,0}\makebox(0,0)[lb]{\smash{$(\rho_2,0)$}}}%
                                \put(0.18,0.07){\color[rgb]{0,0,0}\makebox(0,0)[lb]{\smash{$(\rho_3,0)$}}}%
                                                                \put(0.338,0.3){\color[rgb]{0,0,0}\makebox(0,0)[lb]{\smash{$b_1$}}}%
                                \put(0.35,0.19){\color[rgb]{0,0,0}\makebox(0,0)[lb]{\smash{$b_2$}}}%
                                \put(0.5,0.47){\color[rgb]{0,0,0}\makebox(0,0)[lb]{\smash{$a_1$}}}%
                                \put(0.45,0.3){\color[rgb]{0,0,0}\makebox(0,0)[lb]{\smash{$a_2$}}}%

  \end{picture}%
\endgroup
        \caption{The dotted line is the shock originating from the point $(\rho_3,0)$ until the point $(x_0,T).$}
        \label{fig:4}
\end{figure}
Due to the construction of $a_1,a_2,b_1,b_2,s_1,s_2,t_3,\rho_3$,  the solution in the region $x\in \mathbb{R},T>t\geq 0,$ 
to the above initial data is given by (see figure \ref{Fig3})
\begin{eqnarray}
 u(x,t)=
\left\{\begin{array}{lll}
 b_1 &\mbox{ if }& x-\rho_3<s_1t, x<0,\\
b_2 &\mbox{ if}& x-\rho_3>s_1t, x<0,\\
 a_1 &\mbox{ if }& x<s_2(t-t_3),x>0, \\
a_2 &\mbox{ if}& x>s_2(t-t_3),x>0.
\end{array}\right.\label{solutionlemma2}
\end{eqnarray}
%The continuity of the solution on those specific region mentioned in the present lemma is clear from (\ref{solutionlemma2}).
This proves the lemma.
 \end{proof}
\begin{lemma}\label{BClemma}
 Let $R>0$. Let us assume that $\rho:[0,R]\rightarrow (-\infty,0)$  and 
 $y:\mathbb{R}\setminus[0,R]\rightarrow \mathbb{R}$ be two non decreasing functions
 satisfies
 \begin{eqnarray}\label{rho-relation}
  \begin{array}{lll}
xy(x)\geq 0   \ \mbox{if}\ x\in \mathbb{R}\setminus[R,0]\ \mbox{and} \ \rho(0)\geq y(x)\ \mbox{if}\ x\leq 0.\\
  \end{array}
 \end{eqnarray}
Then there exists a
 solution $u\in L^\infty(\mathbb{R}\times[0,\infty))$ of \eqref{conlaw-equation}
and  an unique strictly decreasing function $t:[0,R)\rightarrow [0,T)$ such that 
\begin{eqnarray}
  -\frac{\rho(x)}{t(x)}&=&h_+\left(\frac{x}{T-t(x)}\right) \ \mbox{a.e.}\  x\in[0,R), \label{mainrelation}\\
u(x,T)&=&(f^\prime)^{-1}\left(\frac{x}{T-t(x)}\right)\ \mbox{a.e.}\  x\in[0,R). \label{Final-state} \end{eqnarray}
    \end{lemma}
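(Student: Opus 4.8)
The plan is to construct the solution $u$ and the function $t(\cdot)$ by a backward discretization in the $x$-variable and then pass to a limit. First I would observe that, by Lemma \ref{simplelemma}(1), once a function $t:[0,R)\to[0,T)$ satisfying \eqref{mainrelation} is produced, it is automatically strictly decreasing, and by Lemma \ref{simplelemma}(2) it is a.e. unique; so the whole content of the lemma is existence of $t(\cdot)$ together with a genuine entropy solution $u$ realizing it. The key point is that for each fixed $x\in[0,R)$ the scalar equation
\[
-\frac{\rho(x)}{\tau}=h_+\!\left(\frac{x}{T-\tau}\right),\qquad \tau\in[0,T),
\]
has a solution: as $\tau$ increases from $0$ to $T$, the left side $-\rho(x)/\tau$ decreases from $+\infty$ to $-\rho(x)/T>0$, while the right side $h_+(x/(T-\tau))$ is nonnegative (since $h_+\ge 0$) and nondecreasing in $\tau$, equal to $h_+(x/T)$ at $\tau=0$ and tending to $h_+(+\infty)=+\infty$ as $\tau\to T^-$; hence by the intermediate value theorem a solution $\tau=t(x)$ exists, and strict monotonicity of both sides forces it to be unique for each $x$. (For $x=0$ one sets $t(0)$ by the limit/convention; the relation is only required a.e.) This defines $t(\cdot)$ pointwise; monotonicity in $x$ then comes from Lemma \ref{simplelemma}(1) applied to $\rho$ restricted to any subinterval.

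Next I would build $u$. Partition $[0,R]$ into small intervals by points $0=\xi_0<\xi_1<\dots<\xi_N=R$, and on the coarse grid replace $\rho$ by a step function $\rho_N$ taking the value $\rho(\xi_{j})$ on $(\xi_{j-1},\xi_j]$. For each step, the jump in $\rho_N$ together with the associated pair of times $t(\xi_{j-1}),t(\xi_j)$ is exactly the configuration handled by Lemma \ref{discontinuouslemma}: a single shock in the right quarter-plane emanating from some point $(\rho_3^{(j)},0)$, joining the two characteristic fans, with the left-quarter-plane data read off through $g_+^{-1}f_+$ as in that lemma. Concatenating these pieces — and using the ordering hypotheses \eqref{rho-relation} together with the monotonicity of $\rho$ and of $y$ to guarantee that the successive shock footpoints $\rho_3^{(j)}$ are ordered, that no characteristics cross improperly, and that the left-state data patch together monotonically — gives an approximate entropy solution $u_N$ on $\mathbb R\times[0,T]$ with $u_N(\cdot,T)=(f')^{-1}(x/(T-t_N(x)))$ on $[0,R)$, where $t_N$ is the piecewise-constant time function of the $N$-th construction. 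On $\mathbb R\setminus[0,R]$ the prescribed nondecreasing $y$ determines the initial data there by the usual Lax–Oleinik inversion, and the compatibility conditions $xy(x)\ge 0$ and $\rho(0)\ge y(x)$ for $x\le 0$ are precisely what is needed so that these characteristics reach $(x,T)$ without interfering with the interface fans.

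Finally I would pass $N\to\infty$. Since $t(\cdot)$ is monotone and bounded, $t_N\to t$ a.e. (indeed at every continuity point of $t$), so the target states $(f')^{-1}(x/(T-t_N(x)))$ converge a.e. to $(f')^{-1}(x/(T-t(x)))$; the $u_N$ are uniformly bounded in $L^\infty$ (bounds depending only on $\|\rho\|_\infty$, $R$, $T$ and the fluxes), have uniformly bounded spatial variation on compact sets away from the shock curves, and the corresponding Hamilton--Jacobi value functions $v_N$ are uniformly Lipschitz; hence along a subsequence $v_N\to v$ locally uniformly, $v$ is the value function of a bona fide initial datum $u_0\in L^\infty$, and $u=\partial_x v$ is the entropy solution of \eqref{conlaw-equation} furnished by Theorem \ref{AG1}. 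Its trace at $t=T$ on $[0,R)$ is the a.e. limit of the $u_N(\cdot,T)$, which is \eqref{Final-state}, and the relation \eqref{mainrelation} holds because $t$ was constructed to satisfy it and, by Lemma \ref{simplelemma}(2), any $t_+$-function of the limiting solution must coincide with it a.e. The main obstacle I anticipate is the bookkeeping in the concatenation step: showing that the shock footpoints produced by Lemma \ref{discontinuouslemma} for consecutive intervals are correctly ordered and that the left-quarter-plane initial data assembled from the pieces is itself monotone (hence admissible), so that the glued $u_N$ is a genuine entropy solution with no spurious crossing of characteristics at the interface; this is exactly where hypotheses \eqref{rho-relation} and the monotonicity of $\rho$ and $y$ must be used carefully, and where the stability of the construction under refinement has to be checked to get convergence rather than merely a bounded family.
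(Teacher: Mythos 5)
Your proposal follows essentially the same route as the paper: discretize $\rho$ by step functions, realize each constant piece by a rarefaction fan crossing the interface and each jump by the shock of Lemma \ref{discontinuouslemma}, then pass to the limit via Helly-type BV compactness, with your a priori pointwise definition of $t(x)$ by the intermediate value theorem playing the same role as the paper's intermediate-value solution of the system (\ref{BRiemann}). The only cosmetic difference is in the limit passage, where the paper splits the approximations into two initial--boundary value problems (\ref{conlaw-equationB1})--(\ref{conlaw-equationB2}) on the two quarter-planes rather than arguing through the Hamilton--Jacobi value functions, but both yield the same conclusion.
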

\begin{proof}
In order to prove the Lemma, we split into several steps. In step 1, we construct a solution when $\rho$ is constant. By using step 1,
we allow $\rho$ to be two constants state in step 2.
In step 3, we consider $\rho$ to be an  increasing step function in $[0,R).$ Finally in Step 4, we pass to limit and obtain the result.

\noindent \textbf{Step 1}: {\it Let $0\leq x_1<x_2, T>0$ and $\rho:[x_1,x_2]\rightarrow (-\infty,0)$ be a constant function, 
 then there exists a strictly decreasing function $t: [0,x_2]\rightarrow [0,T]$  and a
 solution $u\in L^\infty(\mathbb{R}\times[0,\infty))$ of \eqref{conlaw-equation}
  satisfies (\ref{mainrelation}), (\ref{Final-state}) for a.e. $x\in[x_1,x_2)$. }
 
\noindent{\it Proof of Step 1.}  Let  $\rho(x)=\rho_0\in (-\infty,0), \ \forall \ x\in [x_1,x_2].$
Let us consider the initial data $u_0$ defined in $\mathbb{R}_-$ by 
 $ u_0(x)= b_1\mathbf{1}_{x<\rho_0}+ b_2\mathbf{1}_{x>\rho_0}$, 
 where $b_1,b_2$ are going to be specify later with the  properties, 
 $   0<g^\prime(b_1)<g^\prime(b_2)$ and 
   $T>-\frac{\rho_0}{g^\prime(b_1)}=t_1\ (\mbox{say})$,   $T>-\frac{\rho_0}{g^\prime(b_2)}=t_2\ (\mbox{say}).
$ 
Then, for $x<0, 0\leq t<T$,  the solution $u(x,t)$ of (\ref{conlaw-equation})  for the above initial data is given by 
\begin{eqnarray}\label{solution1}
  u(x,t)&=&
\left\{\begin{array}{lllll}
b_1 &\mbox{if} \ x<g^\prime(b_1)t+\rho_0,\\
(g^\prime)^{-1}\left(\frac{x-\rho_0}{t}\right) &\mbox{if}\   g^\prime(b_1)t+\rho_0<x<g^\prime(b_2)t+\rho_0,\\
 b_2 &\mbox{if} \  x>g^\prime(b_2)t+\rho_0, 
   \end{array}\right.
  \end{eqnarray}
By R-H condition (\ref{RH-condition}), we define $a_1=f_+^{-1}g(b_1), a_2=f_+^{-1}g(b_2)$ and again by R-H condition and (\ref{solution1}), 
for $t\in[0,T]$, we conclude
\begin{eqnarray}\label{m1}
\begin{array}{ll}
u(0+,t)=a_1\mathbf{1}_{T\geq t>t_1} + f_+^{-1}g(g^\prime)^{-1}\left(\frac{-\rho_0}{t}\right)\mathbf{1}_{t_2<t<t_1}+ a_2\mathbf{1}_{0\leq t< t_2}, 
   \end{array}
  \end{eqnarray}
  hence for $x>0, 0<t\leq T$, the solution is given by 
  \begin{eqnarray}\label{m2}
  u(x,t)&=&
\left\{\begin{array}{lllll}
a_1 &\mbox{if} \ x<f^\prime(a_1)(t-t_1),\\
f_+^{-1}g(g^\prime)^{-1}u(0+,t_+(x)) &\mbox{if}\   f^\prime(a_1)(t-t_1)<x<f^\prime(a_2)(t-t_2),\\
 a_2 &\mbox{if} \ x>f^\prime(a_2)(t-t_2), 
   \end{array}\right.
  \end{eqnarray}  
\begin{figure}[ht]\label{Fig4}
        \centering
        \def\svgwidth{0.9\textwidth}
        \begingroup
    \setlength{\unitlength}{\svgwidth}
  \begin{picture}(1,0.55363752) \put(0,0.1){\includegraphics[width=0.7\textwidth]{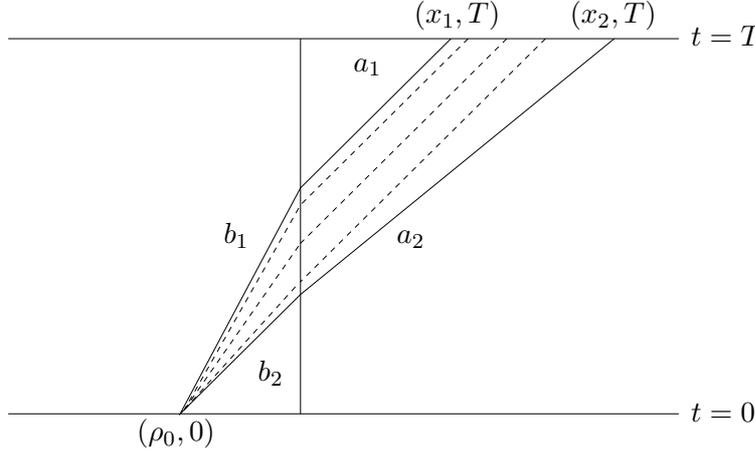}}%
     \put(0.47,0.557){\color[rgb]{0,0,0}\makebox(0,0)[lb]{\smash{$(x_1,T)$}}}%
          \put(0.65,0.557){\color[rgb]{0,0,0}\makebox(0,0)[lb]{\smash{$(x_2,T)$}}}%
        \put(0.79,0.53){\color[rgb]{0,0,0}\makebox(0,0)[lb]{\smash{$t=T$}}}%
                \put(0.79,0.093){\color[rgb]{0,0,0}\makebox(0,0)[lb]{\smash{$t=0$}}}%
                           %     \put(0.03,0.07){\color[rgb]{0,0,0}\makebox(0,0)[lb]{\smash{$(\rho_1,0)$}}}%
               % \put(0.3,0.07){\color[rgb]{0,0,0}\makebox(0,0)[lb]{\smash{$(\rho_2,0)$}}}%
                                \put(0.15,0.07){\color[rgb]{0,0,0}\makebox(0,0)[lb]{\smash{$(\rho_0,0)$}}}%
                                                                \put(0.25,0.3){\color[rgb]{0,0,0}\makebox(0,0)[lb]{\smash{$b_1$}}}%
                                \put(0.29,0.14){\color[rgb]{0,0,0}\makebox(0,0)[lb]{\smash{$b_2$}}}%
                                \put(0.4,0.5){\color[rgb]{0,0,0}\makebox(0,0)[lb]{\smash{$a_1$}}}%
                                \put(0.45,0.3){\color[rgb]{0,0,0}\makebox(0,0)[lb]{\smash{$a_2$}}}%
  \end{picture}%
\endgroup
        \caption{A rarefaction originating from the point $(\rho_0,0)$.}
        \label{fig:5}
\end{figure}
  where $t_+:[f^\prime(a_1)(T-t_1),f^\prime(a_2)(T-t_2)]\rightarrow [t_1,t_2]$ is a homeomorphism, the existence of 
  such homeomorphism is quite obvious.  
   Note that the lines $x=f^\prime(a_1)(t-t_1), x=f^\prime(a_2)(t-t_2)$, hits $t=T$ at $x=f^\prime(a_1)(T-t_1)$ and  $x=f^\prime(a_1)(T-t_2)$ respectively. 
 Now we are interested to solve the following equation for $(a_1,a_2)$ and $(b_1,b_2)$, i.e, given $x_1,x_2,\rho_0$,
  find pairs $(a_1,a_2)$
 and $(b_1,b_2)$ such that
 \begin{eqnarray}\label{BRiemann}
  \begin{array}{lllll}
   x_1=f^\prime(a_1)(T+\frac{\rho_0}{g^\prime(b_1)}),\ x_2=f^\prime(a_2)(T+\frac{\rho_0}{g^\prime(b_2)}).
  \end{array}
 \end{eqnarray}
In order to solve the above equation (\ref{BRiemann}), we need to consider following 2 cases. 

\noindent \textbf{Case 1:} If $f(\theta_f)\leq g(\theta_g)$: We consider the function $S_1:[\theta_g,\infty]\rightarrow \mathbb{R}$ defined by 
$S_1(x)=f^\prime g^{-1}_+f(x)\left(T+\frac{\rho_0}{g^\prime(x)}\right)$. Then $S_1$ is a continuous function with
$S_1(\theta_g)=-\infty$, $S_1(\infty)=\infty$. Therefore 
by using Intermediate Value Theorem we conclude  the existence of pairs $(a_1,a_2),(b_1,b_2)$ satisfying  \eqref{BRiemann}.

\noindent \textbf{Case 2:} If $f(\theta_f)> g(\theta_g)$: The argument is similar to case 1. 

Now we define  a strictly decreasing function $t:[0,x_2]\rightarrow [0,T]$, by 
\begin{eqnarray}\label{m3}
 t(x)= \left\{\begin{array}{lll}
          T-\frac{x}{f^\prime(a_1)}  &\mbox{if}& 0\leq x<f^\prime(a_1)(t-t_1),\\
          t_+(x) &\mbox{if}&   f^\prime(a_1)(t-t_1)<x<f^\prime(a_2)(t-t_2),\\
                    T-\frac{x}{f^\prime(a_2)}  &\mbox{if}& x_2> x>f^\prime(a_2)(t-t_2).  \end{array}\right.
\end{eqnarray}
From (\ref{solution1}), (\ref{m1}), (\ref{m2}) and (\ref{m3}),  it is easy to check (\ref{mainrelation}), (\ref{Final-state}) for 
a.e. $x\in [x_1,x_2].$
Which proves Step 1.

\noindent \textbf{Step 2}: {\it Let $0\leq x_1<x_2<x_3, T>0$ and $\rho:[x_1,x_3]\rightarrow (-\infty,0)$ be such that 
$\rho(x)= \rho_1 \mathbf{1}_{[x_1,x_2]} + \rho_2\mathbf{1}_{[x_2,x_3]} 
$
where $\rho_1,\rho_2$ are two constants such that $\rho_1<\rho_2<0.$
 Then there exists a strictly decreasing function $t: [0,x_3]\rightarrow [0,T]$ 
 and a
 solution $u\in L^\infty(\mathbb{R}\times[0,\infty))$ of \eqref{conlaw-equation}
  satisfies (\ref{mainrelation}), (\ref{Final-state}) for $\mbox{a.e.}\  x\in[0,x_3)$.}
 
 \noindent{\it Proof of Step 2.} Consider the function $\rho$ in $[x_1,x_2]$, then by Step 1, there exists  pairs $(a_1,a_2)$ 
(say), $(t_1,t_2)$ (say) and $(b_1,b_2)$ (say)
as in (\ref{BRiemann}). Similarly considering  the function $\rho$ in $[x_2,x_3]$ and using Step 1, there exists other
pairs $(a_3,a_4)$, $(t_3,t_4)$ (say) and $(b_3,b_4)$  as in (\ref{BRiemann}). 
Then by construction $t_2>t_3$ and it satisfies 
\begin{eqnarray*}
  -\frac{\rho_1}{t_2}=h_+\left(\frac{x_2}{T-t_2}\right),\
   -\frac{\rho_2}{t_3}=h_+\left(\frac{x_2}{T-t_3}\right).
 \end{eqnarray*}
  \begin{figure}[ht]\label{Fig5}
        \centering
        \def\svgwidth{0.9\textwidth}
        \begingroup
    \setlength{\unitlength}{\svgwidth}
  \begin{picture}(1,0.55363752)%
    \put(0,0.1){\includegraphics[width=0.7\textwidth]{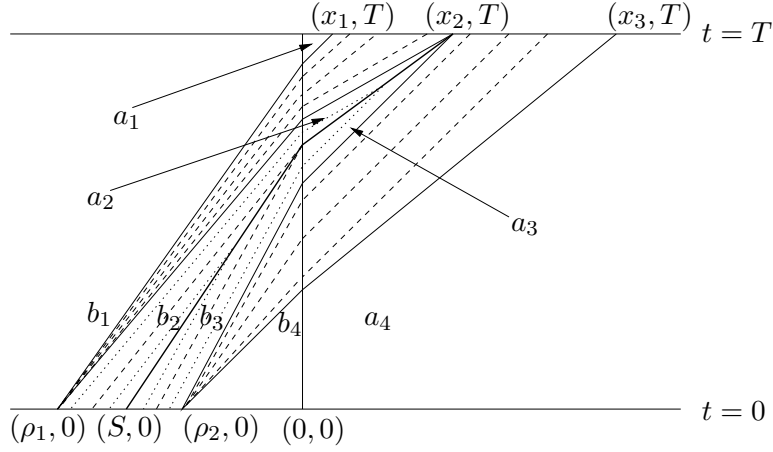}}%
    \put(0.8,0.09){\color[rgb]{0,0,0}\makebox(0,0)[lb]{\smash{$t=0$}}}%
        \put(0.314,0.069){\color[rgb]{0,0,0}\makebox(0,0)[lb]{\smash{$(0,0)$}}}%
    \put(0.8,0.53){\color[rgb]{0,0,0}\makebox(0,0)[lb]{\smash{$t=T$}}}%
                \put(0.347,0.5514){\color[rgb]{0,0,0}\makebox(0,0)[lb]{\smash{$(x_1,T)$}}}%
                                \put(0.0,0.0714){\color[rgb]{0,0,0}\makebox(0,0)[lb]{\smash{$(\rho_1,0)$}}}%
                \put(0.2,0.0714){\color[rgb]{0,0,0}\makebox(0,0)[lb]{\smash{$(\rho_2,0)$}}}%
                                \put(0.1,0.0714){\color[rgb]{0,0,0}\makebox(0,0)[lb]{\smash{$(S,0)$}}}%
            \put(0.48,0.5514){\color[rgb]{0,0,0}\makebox(0,0)[lb]{\smash{$(x_2,T)$}}}%
                            \put(0.09,0.204){\color[rgb]{0,0,0}\makebox(0,0)[lb]{\smash{$b_1$}}}%
                                                        \put(0.17,0.2){\color[rgb]{0,0,0}\makebox(0,0)[lb]{\smash{$b_2$}}}%
                            \put(0.22,0.2){\color[rgb]{0,0,0}\makebox(0,0)[lb]{\smash{$b_3$}}}%
                            \put(0.31,0.192){\color[rgb]{0,0,0}\makebox(0,0)[lb]{\smash{$b_4$}}}%
                            \put(0.12,0.434){\color[rgb]{0,0,0}\makebox(0,0)[lb]{\smash{$a_1$}}}%
                            \put(0.09,0.34){\color[rgb]{0,0,0}\makebox(0,0)[lb]{\smash{$a_2$}}}%
                            \put(0.58,0.31){\color[rgb]{0,0,0}\makebox(0,0)[lb]{\smash{$a_3$}}}%
                            \put(0.41,0.2){\color[rgb]{0,0,0}\makebox(0,0)[lb]{\smash{$a_4$}}}%
                        \put(0.69,0.5514){\color[rgb]{0,0,0}\makebox(0,0)[lb]{\smash{$(x_3,T)$}}}%
  \end{picture}%
\endgroup
        \caption{An illustration Step 2.}
        \label{fig:6}
\end{figure}
 Now by Lemma \ref{discontinuouslemma} and  Step 1, there exists $S\in (\rho_1,\rho_2)$ which allow us to  construct the following initial data 
defined  in $\mathbb{R}_-$ by 
   $u_0(x)=b_1\mathbf{1}_{x<\rho_1}+b_2\mathbf{1}_{\rho_1<x<S}$ $+b_3\mathbf{1}_{S<x<\rho_2}$ $+b_4\mathbf{1}_{x>\rho_2}.$
Then the corresponding solution in the region $\{x<0,0\leq t\leq T\}$ is given by (see figure \ref{Fig5})
\begin{eqnarray}\label{solution-for-two-constant,x>0}
  u(x,t)=
\left\{\begin{array}{lllll}
b_1 &\mbox{if}&  x<g^\prime(b_1)t+\rho_1,\\
(g^\prime)^{-1}\left(\frac{x-\rho_1}{t}\right) &\mbox{if}& g^\prime(b_1)t+\rho_1<x<g^\prime(b_2)t+\rho_1,  \\
 b_2 &\mbox{if}&  g^\prime(b_2)t+\rho_1<x<\frac{g(b_2)-g(b_3)}{b_2-b_3}t+S,\\
  b_3 &\mbox{if}&  \frac{g(b_2)-g(b_3)}{b_2-b_3}t+S<x<g^\prime(b_3)t+\rho_2,\\
(g^\prime)^{-1}\left(\frac{x-\rho_2}{t}\right) &\mbox{if}& g^\prime(b_3)t+\rho_2<x<g^\prime(b_4)t+\rho_2,  \\
  b_4 &\mbox{if}&  g^\prime(b_4)t+\rho_2<x.\\
\end{array}\right.
  \end{eqnarray} 
  By R-H condition (\ref{RH-condition}), we define $a_i=f_+^{-1}g(b_i),$ for $i=1,\cdots, 4.$ Again by R-H condition and
  (\ref{solution-for-two-constant,x>0}), 
  the solution in the region  $\{x>0, 0<t\leq T\}$,  is given by 
  \begin{eqnarray}\label{m4}
  u(x,t)=
\left\{\begin{array}{lllll}
a_1  &\mbox{if}&  x<f^\prime(a_1)(t-t_1),\\
f_+^{-1}g(g^\prime)^{-1}u(0+,t_+(x)) &\mbox{if}&  f^\prime(a_1)(t-t_1) <x<f^\prime(a_2)(t-t_2),\\
  a_2  &\mbox{if}& f^\prime(a_2)(t-t_2)<x <\tilde{S}_1 \left(t+\frac{S}{\tilde{S}_2}\right),\\
 a_3 &\mbox{if}&
\tilde{S}_1 \left(t+\frac{S}{\tilde{S}_2}\right)<x<f^\prime(a_3)(t-t_3),\\
 f_+^{-1}g(g^\prime)^{-1}u(0+,t_+(x))&\mbox{if}&   f^\prime(a_3)(t-t_3) <x<f^\prime(a_4)(t-t_4),\\
 a_4  &\mbox{if}& f^\prime(a_4)(t-t_4)>x, 
   \end{array}\right.
  \end{eqnarray}
  where $\tilde{S_1}=\left(\frac{f(a_2)-f(a_3)}{a_2-a_3}\right)$, $\tilde{S_2}=\frac{g(b_2)-g(b_3)}{b_2-b_3}$ and $t_+:[f^\prime(a_1)(T-t_1),f^\prime(a_2)(T-t_2)]\cup f^\prime(a_3)(T-t_3),f^\prime(a_4)(T-t_4)] \rightarrow [t_1,t_2]\cup [t_3,t_4]$ is 
  a homeomorphism.
 Then  we define a strictly decreasing function $t: [0,x_3]\rightarrow [0,T]$ by 
$
 t(x)=   \left(T-\frac{x}{f^\prime(a_1)}\right) \mathbf{1}_{[0,f^\prime(a_1)(t-t_1)]}+ 
t_+(x)\mathbf{1}_{[0,x_3]\setminus[0,f^\prime(a_1)(t-t_1)]}.$
                 Therefore from definition of $t$, (\ref{solution-for-two-constant,x>0}) and  (\ref{m4}),  it is easy 
         to check (\ref{mainrelation}), (\ref{Final-state}) for 
a.e. $x\in [0,x_3].$
Which proves Step 2.
 
\noindent\textbf{Step 3:} {\it Discretization of  both the functions $\rho,y$ by piecewise constant  and develop a solution  with a 
piecewise constant initial data such that 
(\ref{mainrelation}), (\ref{Final-state}) holds for each discretized function $\rho_N.$}
 
 In the present step our aim to create a piecewise constant initial data in the region $[y(0),y(R)]$. Initial data $\bar{u}^N_0$ (say)
 in the region $\mathbb{R}\setminus 
 [y(0),y(R)]$ can be construct in the same way as in Lemma 3.6 of \cite{Sop}.
 In order to do that we first discretized $\rho$ to piecewise constants. Let $N\in \mathbb{N}$. 
 Let $\rho(0)=z_1<z_2<\cdots<z_N=\rho(R)$ be such that 
 $  |z_i-z_{i+1}|<\frac{1}{N}$ for $i=1,\cdots, N-1$.
We define $\rho^{-1}[z_1,z_{i+1}]=[x_0,x_i],$ then $0=x_0\leq x_1\cdots \leq x_N=R. $
Let us define a new function $\rho_N:[0,R]\rightarrow (-\infty,0)$ 
by $ \rho_N(x)=z_1\mathbf{1}_{[x_0,x_1]}+\sum\limits_{i=2}^{N-1}z_i\mathbf{1}_{(x_i,x_{i+1}]}(x).$
By definition of $\rho_N$, we have 
%\begin{eqnarray*}
 $|\rho_N-\rho(x)|<\frac{1}{N}, \ \mbox{for} \ x\in(0,R).
$   
 For $i=1,\cdots, N-1,$ we consider $\rho_N$ in each interval $[x_i,x_{i+1}]$ and apply step 1 and step 2, then for each $[x_i,x_{i+1}]$ 
 there exist
pairs $(b_{2i+1}, b_{2i+2}), (a_{2i+1}, a_{2i+2}), (t_{2i+1}, t_{2i+2})$ and $S_i\in (z_i,z_{i+1})$. Also the following equation satisfies
for $i=1,\cdots, N-1,$
\begin{eqnarray*}
  -\frac{\rho_i}{t_{2i}}&=h_+\left(\frac{x_i}{T-t_{2i}}\right),\
   -\frac{\rho_{i+1}}{t_{2i+1}}&=h_+\left(\frac{x_i}{T-t_{2i+1}}\right).
\end{eqnarray*}
\begin{figure}[ht]\label{Fig6}
        \centering
        \def\svgwidth{0.8\textwidth}
        \begingroup
    \setlength{\unitlength}{\svgwidth}
  \begin{picture}(1,0.55363752)%
    \put(0,0.1){\includegraphics[width=0.9\textwidth]{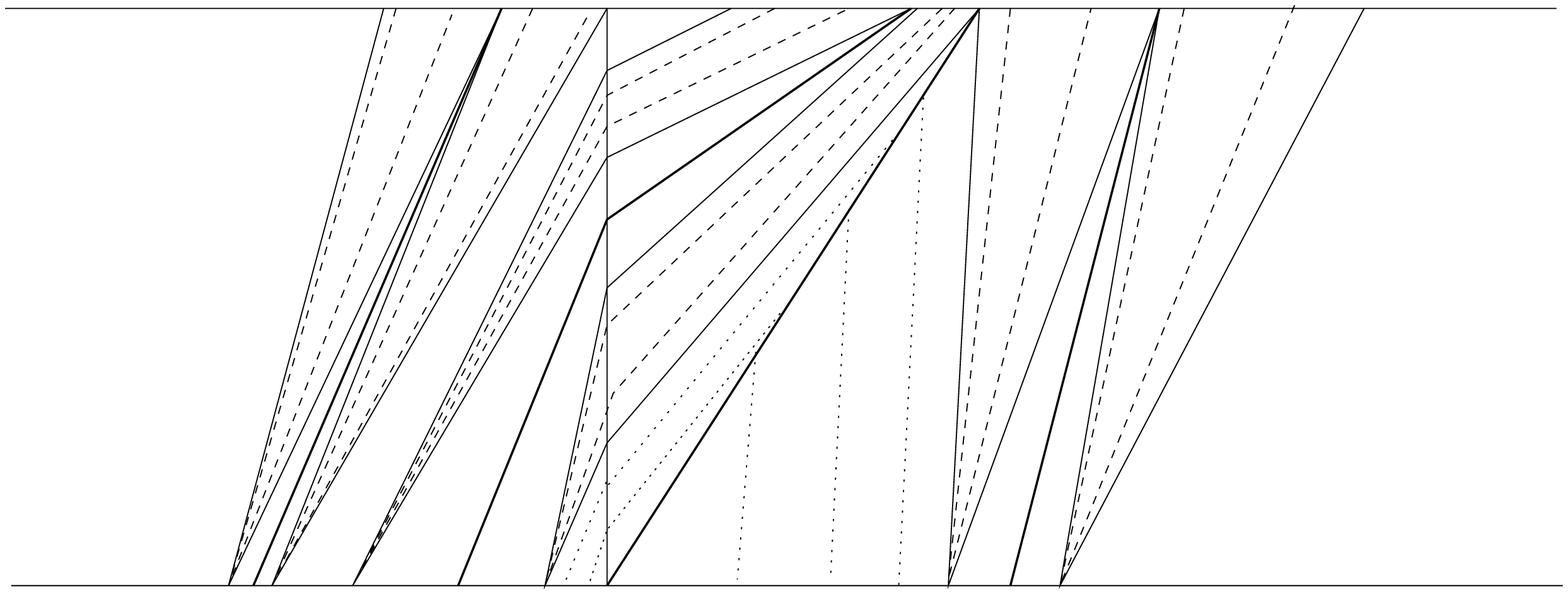}}%
    \put(-0.085,0.09){\color[rgb]{0,0,0}\makebox(0,0)[lb]{\smash{$t=0$}}}%
        \put(0.4,0.077){\color[rgb]{0,0,0}\makebox(0,0)[lb]{\smash{$x=0$}}}%
    \put(-0.085,0.5109){\color[rgb]{0,0,0}\makebox(0,0)[lb]{\smash{$t=T$}}}%
        \put(0.2185,0.07){\color[rgb]{0,0,0}\makebox(0,0)[lb]{\smash{$(\rho_i,0)$}}}%
                \put(0.632185,0.07){\color[rgb]{0,0,0}\makebox(0,0)[lb]{\smash{$(y_k,0)$}}}%
                                \put(0.693,0.53){\color[rgb]{0,0,0}\makebox(0,0)[lb]{\smash{$(R,T)$}}}%
            \put(0.47,0.53){\color[rgb]{0,0,0}\makebox(0,0)[lb]{\smash{$(x_i,T)$}}}%
                        \put(0.57,0.53){\color[rgb]{0,0,0}\makebox(0,0)[lb]{\smash{$(x_{i+1},T)$}}}%
  \end{picture}%
\endgroup
        \caption{An illustration of Step 3.}
        \label{fig:7}
\end{figure}
Hence we obtain the following piecewise constant initial data in the region $[z_1,z_N]$, combining $\bar{u}_0\in \mathbb{R}\setminus [y(0),y(R)]$
we obtain the following initial data $u_0^N$ (say), given by (see figure \ref{Fig6})
 \begin{eqnarray}\label{backfinal}
  u_0^N(x)=
\left\{\begin{array}{lllll}
\bar{u}^N_0(y(0)-) &\mbox{if}&  x\in[y(0),\bar{S}^N],\\
b_1 &\mbox{if}&  x\in[S_0,z_1],\\
b_{2i} &\mbox{if}&  x\in[z_i,S_i],\ \mbox{for}\ i=1,\cdots, N-1,\\
 b_{2i+1} &\mbox{if}&  x\in[S_i,z_{i+1}],\ \mbox{for}\ i=1,\cdots, N-1,\\
 b_{2N} &\mbox{if}&  x\in[z_N,0],\ \mbox{for}\ i=1,\cdots, N-1,\\
 \bar{u}^N_0(y(R)+) &\mbox{if}&  x\in[0,y(R)],\\
 \bar{u}^N_0 &\mbox{if}&  x\in\mathbb{R}\setminus [y(0),y(R)],\\
   \end{array}\right.
  \end{eqnarray}
where $-\bar{S}^N=\displaystyle\frac{g(\bar{u}^N_0(y(0)-))-g(b_1)}{\bar{u}^N_0(y(0)-)-b_1}.$
Now similarly as in Step 1, Step 2 there is a homeomorphism denoting  by 
$t^N:\cup_{i=1}^{N-1}[f^\prime(a_i)(T-t_i),f^\prime(a_2)(T-t_2)] \rightarrow \cup_{i=1}^{N-1} [t_{i},t_{i+1}]$.
 Which is a strictly decreasing function  from $[0,R]\rightarrow [0,T]$ and by using the explicit formula  
 it is easy 
         to check (\ref{mainrelation}), (\ref{Final-state}) for 
a.e. $x\in [0,R].$ Let us denote the corresponding solution $u^N$ of the initial data  $u_0^N$, then it is to be noticed that the construction
has been done such a way that the shocks are discrete in the region $[0,T]\times \mathbb{R}.$

\noindent\textbf{Step 4:} {\it Passage  to the limit: in this step we prove up to a subsequence $u_0^N$ converges to some $u_0$ in $L^1_{loc}$ and the corresponding solution $u^N$ 
also converges to the solution $u$ up to a subsequence  and finally (\ref{mainrelation}), (\ref{Final-state}) holds.}

By definition, $\rho^N\rightarrow \rho(x)$ point-wise and by Helly's theorem there exists a subsequence (after relabeling) 
such that $t^N(x)\rightarrow t(x)$ (say) a.e.. Hence the relation (\ref{mainrelation}) holds for $\rho(\cdot), t(\cdot)$, for a.e. $x\in [0,R].$
 Let us fix $C_1\in (0,R)$. Since from Lemma \ref{simplelemma}, $t(\cdot)$ is strictly decreasing in $[0,R],$ hence there exists a constant 
 $C_2>0,$ such that for  $x\in (0,C_1)$, $t_+(x)>C_2>0$ and $\frac{x}{T-t(x)}<\frac{R}{T-t(C_1)}$ for $x\in(C_1,R)$.  Therefore, there exists 
 constant $C_3>0$ such that 
 \begin{eqnarray}
  \left|\frac{\rho(x)}{t(x)}\right|&\leq \frac{\rho(0)}{C_2} \quad \mbox{if} \quad x\in(0,R),\label{r1}\\
 \left|h_+\left(\frac{x}{T-t(x)}\right)\right|&\leq C_3 \quad \mbox{if} \quad x\in(C_1,R)\label{r2}.
                                                                                                      \end{eqnarray}
Whence (\ref{r1}), (\ref{r2}) and Step 3  allow us to assume that up to a subsequence (after relabeling) 
\begin{eqnarray}\label{r3}
\begin{array}{ll}
 \left|\frac{\rho(x)}{t^N(x)}\right| =\left|h_+\left(\frac{x}{T-t^N(x)}\right)\right|\leq C_4 \quad \mbox{if}\quad x\in[0,R],
 \end{array}
\end{eqnarray}
for some constant $C_4>0.$ By using the explicit formula and (\ref{r3}), there exist a constant $C_5$,
such that 
\begin{eqnarray}\label{r4}
\begin{array}{ll}
 \mbox{Max}\left\{||u_0^N||_\infty, ||u_N||_\infty, \left\| \frac{d\xi}{dt} \right\|_\infty\right\} \leq  C_5,
\end{array}
\end{eqnarray}
where $\xi$ be any characteristic associated to initial data $u_0^N.$ 
%\begin{eqnarray}\label{r5}
%\begin{array}{llll}
%TV(u_0^N: (y(0),y(R)))&=&TV(u_0^N: (\rho(0),\rho(R)))\\ &+& |\bar{u}_0(y(0)-)-u_0^N(\bar{S}^N+)|+|\bar{u}_0(y(R)+)-u_0^N(0-)|\\
%&\leq & TV(u_0^N: (\rho(0),\rho(R))) + 2C_5\ [\mbox{by}\ (\ref{r4})]
%\end{array} 
%\end{eqnarray}
Let us consider any partition $\{p_i\}_{i=1}^K$ for the interval $(\rho(0),\rho(R)).$ Then by  explicit formula and by our construction in Step 3,
there exists corresponding partition  $\{t_i\}_{i=1}^K$ in the interval $(0,T)$ such that 
$g^\prime(u_0^N(p_i)) =-\frac{y^N_{-,0}(t_{i+1})}{t_{i+1}},$ moreover due to the fact that $\rho(R)<0$ and (\ref{r4}), 
\begin{equation}\label{r6}
 \frac{1}{t_i}\leq C_6,
\end{equation}
for some constant $C_6>0.$
Now by using explicit formula and using (\ref{r6}) there exist a constant $C_7>0$ such that 
\begin{eqnarray}\label{BVestimate}
\begin{array}{lllll}
TV(g^\prime(u_0^N): (\rho(0),\rho(R)))&=&\sup\limits_{K}\sum\limits_{i=1}^{K} \left|\frac{y^N_{-,0}(t_{i+1})}{t_{i+1}}-\frac{y^N_{-,0}(t_{i})}{t_{i}}\right|\\
&\leq&\sup\limits_{K}(C_6)^2\sum\limits_{i=1}^{K}|t_{i+1}-t_{i}||y^N_{-,0}(t_{i+1})|\\
&+& \sup\limits_{K}(C_6)^2\sum\limits_{i=1}^{K}|t_i||y^N_{-,0}(t_{i+1})-y^N_{-,0}(t_{i})|\\
&\leq &\sup\limits_{K}(C_6)^2\{|\rho(0)|T+T|\rho(0)-\rho(R)|\}\\
&= &T(C_6)^2\{|\rho(0)|+|\rho(0)-\rho(R)|\}.
\end{array} 
\end{eqnarray} Similarly as in (\ref{BVestimate}) 
one can prove $BV_{loc}(g^\prime(u_0^N(0-,t)): (0,T))\leq C_7$, for all $N\in \mathbb{N}$.
Thanks to Helly's Theorem, there exists subsequence   (after relabeling)  such that $\{g^\prime(u_0^N)\}$ converges point-wise to some function 
$Q$ (say) in $(\rho(0),\rho(R))$ . Then define $\tilde{u}_0(x)=(g^\prime)^{-1}Q(x),$ therefore $u_0^N\rightarrow \tilde{u}_0$ in
 $L^1(\rho(0),\rho(R))$. It can be shown that $\bar{u}^N_0 \rightarrow \bar{u}_0$ (say) in $L^1(\mathbb{R}\setminus 
 [y(0),y(R)])$.
  Again by Helly's Theorem, there exists subsequence   (after relabeling)  $u_0^N(0-,t)\rightarrow u(0-,t)$ (say) a.e. $t\in[0,T]$ and so 
$u_0^N(0+,t)\rightarrow u(0+,t)$ (say) for a.e. $t\in [0,T]$. Now we consider the following two boundary value problems
\begin{equation}\label{conlaw-equationB1}
 \left\{ \begin{split}
W^N_t+f(W^N)_x&=0, & \mbox{if}& \  x>0, t\in [0,T]\\
W^N(t,0)&=u^N(0+,t), & \mbox{if}& \ t\in [0,T]\\
  W^N(x,0)&=u^N_0(x)\mid_{(0,\infty)}, & \mbox{if}&\  x>0.
 \end{split}
 \right.
\end{equation}
\begin{equation}\label{conlaw-equationB2}
 \left\{ \begin{split}
V^N_t+g(V^N)_x&=0,& \mbox{if}& \ x<0, t\in [0,T]\\
V^N(t,0)&=u^N(0-,t),& \mbox{if}& \ t\in [0,T]\\
  V^N(x,0)&=u^N_0(x)\mid_{(-\infty,0)},& \mbox{if}& \  x<0.
 \end{split}
 \right.
\end{equation}
Then one can follow a similar strategy as in 'proof of Lemma 2.1 and 2.2' of \cite{Sco} to conclude that $W^N\rightarrow W$,  $V^N\rightarrow V$
in $L^1_{loc}$ and the limits $W,V$ is the entropy solutions to  the above boundary value problems with boundary data $u(0+,t),u(0-,t)$ and the initial data
$u_0\mid_{(0,\infty)}, u_0\mid_{(-\infty,0)}$ respectively. Then define a new function $Z^N: \mathbb{R}\times [0,T]\rightarrow \mathbb{R}$
by $Z^N=V^N\mathbf{1}_{\mathbb{R}_-\times [0,T]}+W^N\mathbf{1}_{\mathbb{R}_+\times [0,T]}.$ Therefore $Z^N\rightarrow Z$ in $L^1_{loc},$
for some $Z$. It is easy to check that $Z^N$ is a weak solution 
of (\ref{conlaw-equation}). 
Due to the construction, $Z^N$   satisfies R-H condition, interface entropy condition and (\ref{mainrelation})  for each $N$ 
and hence these properties holds in the limit $Z$.
Again  $W^N(x,T)=(f^\prime)^{-1}\left(\frac{x}{T-t^N(x)}\right)$ for  a.e. $x\in (0,R)$ and passing to the limit up to a subsequence
$Z(x,T):=W(x,T)=
(f^\prime)^{-1}\left(\frac{x}{T-t(x)}\right)$  a.e. $x\in (0,R)$. 
%Note that 
%initial data $\bar{u}^N_0$  in the region $\mathbb{R}\setminus 
 %[y(0),y(R)]$ converges in a same fashion as in Lemma 3.6 of \cite{Sop} and
% Combining we get $u^N_0$ is a Cauchy in $L^1_{loc}$.
 %Furthermore our construction in Step 3,  implies that $u^N\in L^\infty\cap BV_{loc}$  and the set of discontinuities of $u^N$ are discrete set of Lipschitz curves. Hence we 
 %apply Theorem 2.1 and Theorem 2.2  of \cite{Kyoto} to conclude  (\ref{Kruzkov2}) holds, moreover there exists $K>0$  such that 
 %for  $M,N\in \mathbb{N}$ large enough
 %\begin{equation}
 %||u^N(t)-u^M(t)||_{L^1_{B_R}}\leq  ||u_0^N-u_0^M||_{L^1_{B_{R+Kt}}}.
 %\end{equation}
%Therefore $u^N(\cdot,t)\rightarrow u(\cdot,t)$ (say) in $L^p([0,T]; L^1(B_R))$, for $1\leq p\leq \infty$. 
This completes the proof of the Lemma.
 \end{proof}
 The following Lemma holds in the same spirit as Lemma \ref{BClemma}.
\begin{lemma}\label{BClemma2}
 Let $R<0$. Let us assume that $\rho:[R, 0]\rightarrow (0,\infty)$  and 
 $y:\mathbb{R}\setminus[R,0]\rightarrow \mathbb{R}$ be two non decreasing functions satisfies
 \begin{eqnarray}\label{rho-relation2}
  \begin{array}{lll}
xy(x)\geq 0  \ \mbox{if}\ x\in \mathbb{R}\setminus[R,0]\ \mbox{and}\
y(x)\geq \rho(0)   \ \mbox{if}\ x\geq 0.\\
  \end{array}
 \end{eqnarray}

 Then there exists a
 solution $u\in L^\infty(\mathbb{R}\times[0,\infty))$ of \eqref{conlaw-equation}
and  an unique strictly increasing function $t:[R,0]\rightarrow [0,T]$ such that for a.e.  $x\in[R,0],$ we have 
\begin{eqnarray*}
  -\frac{\rho(x)}{t(x)}=h_-\left(\frac{x}{T-t(x)}\right),\ 
u(x,T)=(g^\prime)^{-1}\left(\frac{x}{T-t(x)}\right). \end{eqnarray*}
    \end{lemma}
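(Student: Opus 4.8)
\textbf{Proof proposal for Lemma \ref{BClemma2}.}
The plan is to mirror the construction carried out for Lemma \ref{BClemma}, exchanging the roles of the positive and negative quarter planes and replacing $h_+$ by $h_-$, $f$ by $g$, and $f_+^{-1}g$ by $g_-^{-1}f$ wherever they occur. Concretely, the symmetry is the reflection $x\mapsto -x$ composed with the interchange of the fluxes, under which a strictly decreasing $t_+$ becomes a strictly increasing $t_-$ and the formula $-\rho(x)/t(x)=h_+(x/(T-t(x)))$ transforms into $-\rho(x)/t(x)=h_-(x/(T-t(x)))$. The hypothesis \eqref{rho-relation2} is exactly the mirror of \eqref{rho-relation}: the sign condition $xy(x)\ge 0$ is unchanged, while $\rho(0)\ge y(x)$ for $x\le 0$ becomes $y(x)\ge\rho(0)$ for $x\ge 0$, reflecting that now the interface data feeds from the right quarter plane into the left one.

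First I would reprove the analogue of Step 1: for $\rho$ constant equal to $\rho_0\in(0,\infty)$ on $[x_1,x_2]\subset(-\infty,0]$, I posit an initial datum $u_0=b_1\mathbf 1_{x>\rho_0}+b_2\mathbf 1_{x<\rho_0}$ in the right half line (so that a rarefaction centered at $(\rho_0,0)$ forms in $\{x>0\}$), solve the resulting Riemann-type system $x_i=g'(a_i)(T+\rho_0/f'(b_i))$ for the pairs $(a_i,b_i)$ via the Intermediate Value Theorem exactly as in Cases 1 and 2 of Step 1 of Lemma \ref{BClemma}, with the two sub-cases $f(\theta_f)\le g(\theta_g)$ and $f(\theta_f)>g(\theta_g)$ governing the choice of the monotone continuous function whose range is all of $\mathbb R$, and then read off the solution in $\{x<0\}$ through the interface using the Rankine-Hugoniot relation $g(u(0-,t))=f(u(0+,t))$ to define $a_i=g_-^{-1}f(b_i)$. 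This produces a homeomorphism $t_-$ on the relevant $x$-interval and one checks \eqref{mainrelation}--\eqref{Final-state} with $h_-$ and $g'$ directly from the explicit formulas \eqref{r2.12}--\eqref{r2.13}. Then Step 2 (two constant states, introducing the intermediate shock level $S$ via Lemma \ref{discontinuouslemma} suitably reflected), Step 3 (discretize $\rho$ and $y$ into piecewise constants $\rho_N$, assemble the piecewise constant initial datum with discrete shocks), and Step 4 (pass to the limit using Helly's theorem for $t^N$, the uniform $L^\infty$ and total-variation bounds \eqref{r4}--\eqref{BVestimate} now phrased with $y_{+,0}$ in place of $y_{-,0}$, and the boundary-value-problem stability argument of \cite{Sco} on each half plane) go through verbatim after the reflection. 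Uniqueness and strict monotonicity of $t$ come from part (2) and part (1) of Lemma \ref{simplelemma} applied with $h_-$; note Lemma \ref{simplelemma} as stated is for $h_+$, so I would first record the trivial mirror statement that if $\rho$ is non-increasing (here $x\mapsto\rho(x)$ on $[R,0]$) and $-\rho(x)/t(x)=h_-(x/(T-t(x)))$, then $t$ is strictly increasing, whose proof is the same one-line convexity/monotonicity contradiction.

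The main obstacle I anticipate is purely bookkeeping rather than conceptual: one must make sure that the compatibility condition $y(x)\ge\rho(0)$ for $x\ge0$ in \eqref{rho-relation2} is the precisely correct hypothesis so that the piecewise constant initial datum built in Step 3 on $[y(0),y(R)]$ glues consistently with the outer datum $\bar u_0$ on $\mathbb R\setminus[y(0),y(R)]$ without creating spurious characteristics crossing $x=0$ in the wrong direction, i.e.\ without violating the interface entropy condition \eqref{entropy-condition}; this is where the reflected inequality does its work, and it must be tracked through the definition of $\bar S^N$ and the ordering of the $b_i$'s. Since the statement explicitly says the Lemma ``holds in the same spirit as Lemma \ref{BClemma},'' I would present only the reflection dictionary and the one new verification (the sign/compatibility check), and refer to the proof of Lemma \ref{BClemma} for the remaining details.
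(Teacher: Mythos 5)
Your proposal is correct and matches the paper, which in fact gives no separate proof of this lemma at all: it only remarks that the result ``holds in the same spirit as Lemma \ref{BClemma}.'' Your reflection dictionary (swapping the quarter planes, $h_+\leftrightarrow h_-$, $f\leftrightarrow g$, $y_{-,0}\leftrightarrow y_{+,0}$, and the mirrored version of Lemma \ref{simplelemma}) is exactly the intended argument, carried out in more detail than the paper itself provides.
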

\section{Optimal control for discontinuous flux}\label{section:optimalcontrol}
\noindent \textbf{Class of target function}: Let $0<C$. Let $k$ be 
 a measurable function on $\mathbb{R}.$ Define a new function $\eta: \mathbb{R}\rightarrow \mathbb{R}$ by 
 %\begin{eqnarray}
  $\eta[k](x)= g^\prime(k(x))\mathbf{1}_{x\leq 0}+ f^\prime(k(x))\mathbf{1}_{x> 0}.$
 % \end{eqnarray}
Then Supp $\eta[k](x)\subset [-C,C]$ if and only if $k(x)=\theta_g$ for  $x<-C$ and $k(x)=\theta_f$ for  $x>C.$
 
 \noindent\textbf{Admissible class of initial data}:  Let us define a new function $\bar{\theta}:\mathbb{R}\rightarrow\mathbb{R}$ by 
 $\bar{\theta}(x)=H(x)\theta_f+(1-H(x))\theta_g$. Then  Admissible class of initial data  $A$, is defined by 
 \begin{eqnarray}\label{admissible}
\begin{array}{llll}
A =\{u_0(x)=z(x)+ \bar{\theta}(x)\ :\ z\in L^\infty(\mathbb{R})\ \mbox{with compact support}
\}.\end{array}\end{eqnarray}
\noindent\textbf{Cost functional}:
 Fix a $T>0$ and for $u_0\in A$, let $u$ be the corresponding solution to (\ref{conlaw-equation}) associated to the initial data $u_0$.
 We define the cost functional $J$ by 
 \begin{eqnarray}
 \begin{array}{lllll}\label{cost1}
J(u_0)&= \int\limits_{-\infty}^{L_1(T)}|g^\prime(u(x,T)) -
\eta[k](x)|^2dx +
\int\limits_{L_1(T)}^{0}| f^\prime f_-^{-1}g(u(x,T))-\eta[k](x)|^2dx\\
&+ \int\limits_{0}^{R_1(T)}|g^\prime g_+^{-1}f(u(x,T))-\eta[k](x)|^2dx+\int\limits_{R_1(T)}^{\infty}|f^\prime(u(x,T)) -
\eta[k](x)|^2dx.
 \end{array}
\end{eqnarray}
\noindent \textbf {Optimal control problem:}
Then the question is to find the  optimal control $u_0\in A$ so that, one has
\begin{eqnarray}
 J(u_0)=\min_{w_o\in A}{J}(w_0).\label{optimalcontrol}
\end{eqnarray}

%Then the another   optimal control is to find  $u_0\in B_T$ so that, one has
%\begin{eqnarray}
% \tilde{J}(u_0)=\min_{w_o\in B_T}{\tilde{J}}(w_0).\label{oc142}
%\end{eqnarray}
 \begin{theorem}\label{maintheoremop}
  There exists a minimizer for (\ref{optimalcontrol}).
  \end{theorem}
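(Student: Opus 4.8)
The plan is to run the direct method of the calculus of variations, with the one genuinely delicate point being that, although the admissible class $A$ in \eqref{admissible} is very far from compact, the cost functional $J$ in \eqref{cost1} depends on an initial datum only through the time$-T$ profile of the corresponding solution, and that profile is rigidly controlled by the explicit formula of Theorem~\ref{AG1} together with the monotonicity of the interface times supplied by Lemma~\ref{norarefaction}. So I would transfer compactness from the controls to the profiles, take limits there, and then use the backward construction of Lemmas~\ref{discontinuouslemma}, \ref{BClemma}, \ref{BClemma2} to turn the limiting profile back into an admissible control.

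Concretely: set $m:=\inf_{w_0\in A}J(w_0)\in[0,\infty)$ (finiteness is clear, e.g.\ from $u_0=\bar\theta$, whose solution is bounded and coincides with $\theta_f$ near $+\infty$ and $\theta_g$ near $-\infty$, while $\eta[k]$ is compactly supported), and fix a minimizing sequence $u_0^n\in A$ with $J(u_0^n)\le m+1$, with corresponding solutions $u^n$. Using $\mathrm{Supp}\,\eta[k]\subset[-C,C]$, the bound $J(u_0^n)\le m+1$ gives uniform $L^2(\mathbb R)$ control of $g'(u^n(\cdot,T))$, $f'f_-^{-1}g(u^n(\cdot,T))$, $g'g_+^{-1}f(u^n(\cdot,T))$ and $f'(u^n(\cdot,T))$ on the four respective regions. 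Combining this with the explicit formula, in which on the outer regions $f'(u^n(\cdot,T))$ (resp.\ $g'$) equals $\bigl(x-y^n(x,T)\bigr)/T$ with $x\mapsto y^n(x,T)$ nondecreasing by item (3) of Theorem~\ref{AG1}, and on $[0,R_1^n)$ (resp.\ $(L_1^n,0]$) is governed through $h_\pm$ by the strictly monotone functions $t^n_\pm(\cdot,T)$ of Lemma~\ref{norarefaction} and the monotone interface traces $y^n_{\pm,0}$ of item (5), one reduces — after truncations that do not increase $J$ — to a minimizing sequence for which $R_1^n:=R_1(T)$, $L_1^n:=L_1(T)$, the functions $y^n(\cdot,T)$, $t^n_\pm(\cdot,T)$, $y^n_{\pm,0}$ all lie in fixed compact sets. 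By Helly's theorem and a diagonal extraction I pass to a subsequence along which $R_1^n\to R^*$, $L_1^n\to L^*$, $y^n(\cdot,T)\to y^*$, $t^n_\pm(\cdot,T)\to t^*_\pm$, $y^n_{\pm,0}\to y^*_{\pm,0}$ pointwise a.e.; the identities of the type $-\rho/t=h_\pm(\,\cdot\,/(T-t))$ and the sign/ordering conditions of the form \eqref{rho-relation}, \eqref{rho-relation2} survive the limit because $h_\pm$ are continuous and monotonicity is stable under pointwise limits.

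The core step is then to realise this limiting profile as the genuine time$-T$ trace of a solution issued from an admissible datum. Setting $\rho^*(x):=-t^*_+(x)\,h_+\!\bigl(x/(T-t^*_+(x))\bigr)$ on $[0,R^*)$ and symmetrically on $(L^*,0]$, and $y:=y^*$ outside $[L^*,R^*]$, the pairs $(\rho^*,y^*)$ meet the hypotheses of Lemmas~\ref{BClemma} and \ref{BClemma2}, whose backward construction — resting on Lemma~\ref{discontinuouslemma} and crucially exploiting that no rarefaction emanates from $x=0$ (Lemma~\ref{norarefaction}) — produces an initial datum $u_0^*$, of the form $z+\bar\theta$ with $z$ compactly supported, hence $u_0^*\in A$, and a solution $u^*$ whose time$-T$ profile is exactly the limiting one, with $R_1(T)=R^*$, $L_1(T)=L^*$. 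Finally, since $g'g_+^{-1}f$ and $f'f_-^{-1}g$ are continuous, the four integrands in \eqref{cost1} evaluated along $u^n$ converge pointwise a.e.\ to those along $u^*$, and with $L_1^n\to L^*$, $R_1^n\to R^*$ an application of Fatou's lemma (handling the moving endpoints of integration) gives $J(u_0^*)\le\liminf_n J(u_0^n)=m$. Since $u_0^*\in A$ forces $J(u_0^*)\ge m$, we obtain $J(u_0^*)=m$, i.e.\ $u_0^*$ is a minimizer.

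The main obstacle is precisely the reconstruction in the penultimate paragraph: because $A$ carries no compactness one cannot pass to the limit on the controls $u_0^n$ directly — the nonlinear solution map $u_0\mapsto u(\cdot,T)$ is not weak$-*$ continuous — so one is forced to take limits on the time$-T$ profile data and then invert the flow, and it is exactly this inversion that fails for a naive adaptation of the $f=g$ argument, since there are no interface rarefactions to absorb the construction; this is what Lemmas~\ref{discontinuouslemma}--\ref{BClemma2} are built to do. A secondary but nontrivial technical point is the bookkeeping of the a priori estimates that keep $R_1^n$, $L_1^n$ and the monotone profile functions in fixed compacta (in particular ruling out escape to infinity along the minimizing sequence) and the control of the moving integration limits $L_1^n,R_1^n$ in the lower-semicontinuity step.
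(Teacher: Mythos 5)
Your proposal is correct and follows essentially the same route as the paper: transfer compactness from the (non-compact) class of initial data to the monotone profile data $(R(T),\rho,y)$ via the a priori estimates and Helly's theorem, and then realize the limiting profile as an actual time-$T$ trace through the backward construction of Lemmas \ref{BClemma} and \ref{BClemma2}. The only cosmetic difference is that the paper reformulates the cost as a functional $\tilde J$ directly on the profile set $\tilde A_1$ (split according to the dichotomy $A=A_1\cup A_2$ from item (6) of Theorem \ref{AG1}) and minimizes there, whereas you extract the profiles from a minimizing sequence of initial data; the substance of the argument is identical.
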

  
Let us define the following admissible class of initial data. Let $T>0.$ In order to mention simple notations  we denote $R(t),L(t)$ instead of 
$R_1(t), L_1(t).$

\par We consider the following two admissible class of initial data
\begin{eqnarray*}
A_1 =\{u_0\in A : \ L(T,u_0)=0\}\ \mbox{and}\ A_2 =\{u_0\in A : \ R(T,u_0)=0
\}.\end{eqnarray*}
From (6) of Theorem \ref{AG1}, $A=A_1\cup A_2.$ $ R(T,u_0),  L(T,u_0),$ denotes the $R_1,L_1$ curves as in Theorem \ref{AG1} with respect 
to the initial data $u_0$.

In view of (6) of Theorem \ref{AG1}, we conclude 
\begin{eqnarray}
 \min_{u_o\in A}{J}(u_0)=\min\{ \min_{u_o\in A_1}{J}(u_0),  \min_{u_o\in A_2}{J}(u_0)\}.\label{optimalcontrol2}
\end{eqnarray}
Hence finding  a minimum in \eqref{optimalcontrol} is equivalent to find minimum of the functional $J$ over the sets $A_1,A_2$. 

\begin{lemma}
 For $u_0\in A$, $J(u_0)$ is well defined.
 \begin{proof}
  Because of finite speed of propagation, it is immediate.
 \end{proof}

\end{lemma}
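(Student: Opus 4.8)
The plan is to show that the integrand defining $J(u_0)$ is supported on a fixed compact set and is integrable there, after which finiteness is immediate; this is the content of the author's phrase ``finite speed of propagation.'' First I would record the two a priori facts that drive everything. Since $u_0=z+\bar\theta$ with $z\in L^\infty(\mathbb{R})$ compactly supported, we have $\|u_0\|_\infty<\infty$, and by Theorem \ref{AG1}(1) the associated value function $v$ is uniformly Lipschitz, so $u(\cdot,T)=\partial v/\partial x\in L^\infty(\mathbb{R})$ with $\|u(\cdot,T)\|_\infty=:U<\infty$. As $f,g\in C^1$, this yields a finite speed bound $\Lambda=\sup\{|f'(s)|,|g'(s)|:|s|\le U\}$ on all characteristic speeds.

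Next I would invoke finite speed of propagation. Fix $M>0$ with $\operatorname{supp}z\subset[-M,M]$, so that $u_0\equiv\theta_f$ on $(M,\infty)$ and $u_0\equiv\theta_g$ on $(-\infty,-M)$. Since the state to the right of $x=M$ is the constant $\theta_f$ (with the interface at $x=0$ far to its left), the domain-of-dependence argument with speed bounded by $\Lambda$ gives $u(x,T)=\theta_f$ for $x>M+\Lambda T$ and, symmetrically, $u(x,T)=\theta_g$ for $x<-(M+\Lambda T)$. Setting $K=\max\{C,\,M+\Lambda T\}$ I then claim the integrand vanishes for $|x|>K$. Indeed the Lipschitz curves of Theorem \ref{AG1}(3) cannot escape the propagation cone, so $0\le R_1(T)\le K$ and $-K\le L_1(T)\le 0$; hence for $|x|>K$ we are in an outer region where the integrand equals $|f'(u(x,T))-\eta[k](x)|^2$ (for $x>K$) or $|g'(u(x,T))-\eta[k](x)|^2$ (for $x<-K$). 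There $\eta[k](x)=0$ because $\operatorname{supp}\eta[k]\subset[-C,C]$, while $f'(u(x,T))=f'(\theta_f)=0$ and $g'(u(x,T))=g'(\theta_g)=0$ by the choice of $\theta_f,\theta_g$ as the minima. Thus $J(u_0)=\int_{-K}^{K}(\cdots)\,dx$.

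It then remains to bound the four sub-integrals over the compact interval $[-K,K]$. In the two outer pieces the integrands involve only $f'(u(\cdot,T))$ and $g'(u(\cdot,T))$, which are bounded since $f',g'$ are continuous and $\|u(\cdot,T)\|_\infty<\infty$. In the two inner pieces, over the bounded intervals $(L_1(T),0)$ and $(0,R_1(T))$, the integrands involve the compositions $f'f_-^{-1}g(u(\cdot,T))$ and $g'g_+^{-1}f(u(\cdot,T))$; by their very definition these equal $h_-$ and $h_+$ evaluated at the (signed) characteristic speeds, and $h_\pm$ are the strictly monotone finite-valued maps of the ``Key technical lemmas'' section. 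Since $\eta[k]\in L^2_{loc}$, each of the four integrands is the square of an $L^2(-K,K)$ function, hence lies in $L^1(-K,K)$, giving $J(u_0)<\infty$.

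The only delicate point, which I would treat as the main obstacle, is the legitimacy of the compositions $f'f_-^{-1}g$ and $g'g_+^{-1}f$ when applied to $u(\cdot,T)$: this requires $g(u(x,T))$ to lie in the range of $f_-$ and $f(u(x,T))$ in the range of $g_+$. This is precisely the compatibility encoded in the domains $I_\pm$ of $h_\pm$, and it follows from the sign of the characteristic speed in each quarter-plane (so that $f'(u(x,T))>0$ for $0<x<R_1(T)$ and $g'(u(x,T))<0$ for $L_1(T)<x<0$, by the explicit formula \eqref{r2.12}--\eqref{r2.13}) together with the interface Rankine--Hugoniot identity $f(u(0+,t))=g(u(0-,t))$ of Theorem \ref{AG1}. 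Everything else is routine boundedness on a compact interval.
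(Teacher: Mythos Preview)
Your proposal is correct and takes essentially the same approach as the paper: the paper's proof is the single sentence ``Because of finite speed of propagation, it is immediate,'' and you have simply unpacked that sentence in full detail---compact support of the integrand via the propagation cone, together with local boundedness of the compositions. Your treatment of the well-definedness of $f'f_-^{-1}g$ and $g'g_+^{-1}f$ on the inner intervals is more than the authors bother to say, but it is the right thing to check and your justification via the sign of the characteristic speed in \eqref{r2.12}--\eqref{r2.13} is sound.
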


\noindent\underline{\textbf{Existence of minimizer over the set $A_1$}}:
Let us define a new admissible set
\begin{eqnarray*}
\begin{array}{llll}
 \tilde{A}_1=\big\{(R(T),\rho,y): &i).\  \rho:[0,R(T)]\rightarrow (-\infty,0] \ \mbox{be a non increasing function}, \notag\\         
 &ii).\  y:\mathbb{R}\setminus [0,R(T)]\rightarrow\mathbb{R} \ \mbox{be a non decreasing function},\notag\\   
 &iii).\  xy(x)\geq 0, \ \mbox{and}\ y(x)\leq \rho(0)\ \mbox{for all}\ x\leq0\big\}.
 \end{array}
\end{eqnarray*}

From Lemma \ref{BClemma}, for $(R(T),\rho,y)$, there exists a unique non increasing function $t$ such that 
$$-\frac{\rho(x)}{t(x)}=h_+\left(\frac{x}{T-t(x)}\right),\ \mbox{a.e.} \ x\in(0,R(T)).$$
Let us define a new cost functional $\tilde{J}$ associated with the admissible set $\tilde{A}_1$ by 
 \begin{eqnarray}
 \begin{array}{lllll}\label{newcost}
\tilde{J}(R(T),\rho,y)= \int\limits_{-\infty}^{0}|\frac{x-y(x)}{T} -
\eta[k](x)|^2dx &+& \int\limits_{0}^{R(T)}|-\frac{\rho(x)}{t(x)} -
\eta[k](x)|^2dx\\&+&\int\limits_{R(T)}^{\infty}|\frac{x-y(x)}{T} -
\eta[k](x)|^2dx.
 \end{array}
\end{eqnarray}
Then from (\ref{cost1}), we have 
\begin{equation}\label{ine1}
 \inf_{(R(T),\rho,y) \in \tilde{A}_1} \tilde{J}(R(T),\rho,y) \leq \inf_{u_0\in A_1} J(u_0).
\end{equation}

\noindent\underline{\textbf{Estimations}}: Let Supp $\eta[k]\subset [-C,C].$ $(0,0,x)\in \tilde{A}_1$ and $\tilde{J}(0,0,x)=\|\eta[k]\|^2_{L^2}.$
Hence
\begin{eqnarray*}
 \inf_{(R(T),\rho,y) \in \tilde{A}_1} \tilde{J}(R(T),\rho,y) \leq \|\eta[k]\|^2_{L^2}.
\end{eqnarray*}
Let $(R(T),\rho,y) \in \tilde{A}_1$ be such that $\tilde{J}(R(T),\rho,y) \leq 2\|\eta[k]\|^2_{L^2}.$
Suppose $R(T)>C,$ then 
\begin{eqnarray*}
\begin{array}{ll}
 2 \|\eta[k]\|^2_{L^2} \geq  \tilde{J}(R(T),\rho,y)\geq \int\limits_C^{R(t)}\left|\frac{\rho(x)}{t(x)}\right|^2dx
  =\int\limits_C^{R(t)}\left|h_+\left(\frac{x}{T-t(x)}\right)\right|^2dx.
\end{array}
\end{eqnarray*}
Since $h_+$ is an increasing function and $\left(\frac{x}{T-t(x)}\right)\geq \frac{x}{T},$ we obtain 
$ 2\|\eta[k]\|^2_{L^2}\geq$\\$ \int\limits_C^{R(t)}\left|h_+\left(\frac{x}{T}\right)\right|^2dx\rightarrow\infty \ \mbox{as}\ R(T)\rightarrow\infty.$
Hence there exists $R_0\geq C$ such that 
%\begin{equation}\label{ine2}
 $R(T)\leq R_0.$
%\end{equation}
Now for $x\leq 0, y(x)\leq \rho(0)$, which implies $x-y(x)\geq x-\rho(0)>0.$ Hence 
\begin{eqnarray*}
\begin{array}{llll}
  2 \|\eta[k]\|^2_{L^2} \geq \int\limits_{\rho(0)}^{0}\left|\frac{x-y(x)}{T}-\eta[k](x)\right|^2dx
    &\geq& \frac{1}{2}\int\limits_{\rho(0)}^{0}\left|\frac{x-y(x)}{T}\right|^2dx- \|\eta[k]\|^2_{L^2}\\
        &\geq& \frac{1}{2}\int\limits_{\rho(0)}^{0}\left|\frac{x-\rho(0)}{T}\right|^2dx- \|\eta[k]\|^2_{L^2}\\
&=& \frac{1}{6T^2}|\rho(0)|^2- \|\eta[k]\|^2_{L^2}.\end{array}
                                                  \end{eqnarray*}
Therefore
\begin{equation}\label{ine3}
 |\rho(0)|\leq \left(18T^2\|\eta[k]\|^2_{L^2}\right)^{1/3}=\rho_0(\mbox{say}).
\end{equation}
Since $0\geq \rho(x)\geq \rho(0),$ hence from Lemma \ref{BClemma} and \eqref{ine3}, we have 
\begin{enumerate}
 \item[(i).] If $t(x)\leq T/2$, then $T-t(x)\geq T/2$, which implies $ \frac{x}{T-t(x)}\leq \frac{2x}{T}\leq \frac{2R_0}{T}$.
 \item[(ii).] If $t(x)\geq T/2$, then 
 $h_+\left(\frac{x}{T-t(x)}\right)=-\frac{\rho(x)}{t(x)}\leq \frac{2|\rho(0)|}{T}$
\end{enumerate}
and hence there exists a $\Lambda>0$ such that 
%\begin{equation}\label{ine4}
 $\frac{x}{T-t(x)}\leq \Lambda.$
%\end{equation}
Define $\tilde{y}$ by 
  \begin{eqnarray}
  \tilde{y}(x)&=&
\left\{\begin{array}{lllll}
y(x) &\mbox{if}& x\in[-C,0],\\
x &\mbox{if}& y(-C)\geq -C \ \mbox{and}\ x<-C,\\
y(-C) &\mbox{if}& y(-C)\leq-C \ \mbox{and}\ x\in[y(-C),-C],\\
x &\mbox{if}& x\leq y(-C).
   \end{array}\right.\end{eqnarray}
   Then 
   \begin{eqnarray*}
   \tilde{y}(x)=
y(x) \mathbf{1}_{[-C,0]}+ \mbox{min}\{-C,y(-C)\}\mathbf{1}_{[\mbox{min}\{-C,y(-C)\},-C)}+ x \mathbf{1}_{\{x<\mbox{min}\{-C,y(-C)\}\}}.
   \end{eqnarray*}
  and therefore 
  \begin{eqnarray*}
  \tilde{y}(x)= y(x) \mathbf{1}_{[R(T),R_0]}+
\mbox{max}\{y(R_0), R_0\}\mathbf{1}_{(R_0,\mbox{max}\{R_0,y(R_0)\}]}+
x \mathbf{1}_{\{x>\mbox{max}\{R_0,y(R_0)\}\}}.  
  \end{eqnarray*}
  Hence if $y(-C)<-C,$ then for $x\in[y(-x),-C,$ $y(x)\leq y(-C)=\tilde{y}(x)$ which implies 
  $\frac{x-y(x)}{T}\geq \frac{x-\tilde{y}(x)}{T}=\frac{x-y(-C)}{T}>0.$
  Therefore 
  \begin{eqnarray*}
  \begin{array}{lllll}
\int\limits_{-\infty}^{0}\left|\frac{x-y}{T}-\eta[k]\right|^2&=&  \int\limits_{-\infty}^{y(-C)}\left|\frac{x-y}{T}\right|^2+
\int\limits_{y(-C)}^{-C}\left|\frac{x-y}{T}\right|^2+\int\limits_{-C}^{0}\left|\frac{x-y}{T}-\eta[k]\right|^2\\
&\geq& \int\limits_{-\infty}^{y(-C)}\left|\frac{x-\tilde{y}}{T}\right|^2+
\int\limits_{y(-C)}^{-C}\left|\frac{x-y(-C)}{T}\right|^2+\int\limits_{-C}^{0}\left|\frac{x-y}{T}-\eta[k]\right|^2\\
&\geq& \frac{(-C-y(-C))^3}{3T^2},\end{array}
                                   \end{eqnarray*}
and if $y(R_0)>R_0$, then 
\begin{eqnarray*}
\begin{array}{llll}
 \int\limits_{R(T)}^{\infty}\left|\frac{x-y}{T}-\eta[k]\right|^2\geq \int\limits_{R_0}^{\infty}\left|\frac{x-y}{T}\right|^2
\geq \int\limits_{R_0}^{y(R_0)}\left|\frac{x-\tilde{y}}{T}\right|^2=\frac{(y(R_0)-R_0)^3}{3T^2}.\end{array}
\end{eqnarray*}
Since $\tilde{J}(R(T),y,\rho)\leq 2\|\eta[k]\|^2_{L^2}$, hence 
$
 |-C-y(-C)|^3\leq 6T^2\|\eta[k]\|^2_{L^2}$ and 
$|y(R_0)-R_0|^3\leq 6T^2\|\eta[k]\|^2_{L^2}.$ Therefore $ y(-C)\geq -C-(6T^2\|\eta[k]\|^2_{L^2})^{1/3}$ and $y(R_0)\leq R_0+ (6T^2\|\eta[k]\|^2_{L^2})^{1/3}.$
Again by construction, 
$\tilde{J}(R(T),\rho,y)\geq \tilde{J}(R(T),\rho,\tilde{y})$.
Let us denote $M_1= R_0+ (6T^2\|\eta[k]\|^2_{L^2})^{1/3}$.
Then we obtain the following
\begin{lemma}
 Let $R_0,\rho_0,M_1$ be defined as above and define a new class of admissible set $\bar{A}_1$, by 
$$
 \bar{A}_1=\{(R(T),\rho,y): \  0\leq R(T)\leq R_0, \ \rho_0\leq \rho\leq 0, \  y(x)=x \ \mbox{if}\ x\notin[-M_1,M_1]\}.$$
Then 
\begin{eqnarray*}
 \inf_{\bar{A}_1} \tilde{J}=\inf_{A_1} \tilde{J}.
\end{eqnarray*}
\end{lemma}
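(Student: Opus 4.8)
The plan is to show the two infima coincide by proving the reverse inclusions of infima, where one direction is trivial. Since $\bar A_1 \subset \tilde A_1$ (every triple in $\bar A_1$ clearly satisfies the monotonicity and sign constraints (i)--(iii) defining $\tilde A_1$, with $y(x)=x$ giving $xy(x)\ge 0$ outside $[-M_1,M_1]$ and the constraint $y(x)\le\rho(0)$ for $x\le 0$ being exactly what the estimates above were arranged to guarantee), we immediately get $\inf_{\tilde A_1}\tilde J \le \inf_{\bar A_1}\tilde J$. So the content is the opposite inequality: given any minimizing sequence in $\tilde A_1$, produce a sequence in $\bar A_1$ with no larger cost.

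First I would reduce to the sub-level set: since $(0,0,x)\in\tilde A_1$ has $\tilde J(0,0,x)=\|\eta[k]\|^2_{L^2}$, any minimizing sequence eventually satisfies $\tilde J \le 2\|\eta[k]\|^2_{L^2}$, so it suffices to compare over triples obeying that bound. For such a triple $(R(T),\rho,y)$, the estimations already carried out in the excerpt give $R(T)\le R_0$ and $\rho_0\le\rho(0)\le\rho\le 0$ (using $0\ge\rho(x)\ge\rho(0)$ and \eqref{ine3}); these are exactly the first two constraints defining $\bar A_1$. It remains to handle the tails of $y$. Here I would invoke the truncation $\tilde y$ constructed above: replacing $y$ by $\tilde y$ keeps the triple admissible (the constraints $x\tilde y(x)\ge0$ and $\tilde y(x)\le\rho(0)$ for $x\le0$ are preserved by the explicit formula for $\tilde y$, since on $[-C,0]$ we have $\tilde y=y$ and outside we only decreased $y$ toward $x$ or to the constant $\min\{-C,y(-C)\}\le\rho(0)$), does not change $t(\cdot)$ (which depends only on $\rho$), and by the pointwise inequality $\bigl|\tfrac{x-\tilde y(x)}{T}-\eta[k](x)\bigr|\le\bigl|\tfrac{x-y(x)}{T}-\eta[k](x)\bigr|$ on the tails — valid because $\eta[k]$ vanishes there and $x-\tilde y(x)$ lies between $0$ and $x-y(x)$ — we get $\tilde J(R(T),\rho,\tilde y)\le\tilde J(R(T),\rho,y)$. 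Finally, the sub-level bound forces $|y(-C)+C|^3\le 6T^2\|\eta[k]\|^2_{L^2}$ and $|y(R_0)-R_0|^3\le 6T^2\|\eta[k]\|^2_{L^2}$, so $\tilde y(x)=x$ outside $[-M_1,M_1]$ with $M_1=R_0+(6T^2\|\eta[k]\|^2_{L^2})^{1/3}$; hence $(R(T),\rho,\tilde y)\in\bar A_1$.

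Putting this together: given $\varepsilon>0$ choose $(R(T),\rho,y)\in\tilde A_1$ with $\tilde J(R(T),\rho,y)\le\inf_{\tilde A_1}\tilde J+\varepsilon\le 2\|\eta[k]\|^2_{L^2}$; then $(R(T),\rho,\tilde y)\in\bar A_1$ and $\tilde J(R(T),\rho,\tilde y)\le\tilde J(R(T),\rho,y)\le\inf_{\tilde A_1}\tilde J+\varepsilon$, so $\inf_{\bar A_1}\tilde J\le\inf_{\tilde A_1}\tilde J$. Combined with the trivial inclusion this gives equality.

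I expect the main obstacle to be purely bookkeeping rather than conceptual: one must check carefully that the truncated profile $\tilde y$ still lies in the admissible class $\tilde A_1$ — in particular that the constraint $y(x)\le\rho(0)$ for $x\le 0$ survives the replacement, which is where the definition $\min\{-C,y(-C)\}$ matters and uses $-C$ being in the region where $\eta[k]$ already vanishes together with $y(-C)\le y(0)\le\rho(0)$ — and that monotonicity of $\tilde y$ is preserved at the gluing points. All the hard analytic estimates (the growth of $h_+$, the bound on $|\rho(0)|$, the bounds on the tail overshoot of $y$) have already been established in the ``Estimations'' paragraph, so the proof is essentially a matter of assembling them and verifying that truncation cannot increase the cost; the latter is immediate from $\eta[k]\equiv 0$ on the truncated tails and $0\le x-\tilde y(x)\le x-y(x)$ (resp.\ the reversed chain for $x<0$).
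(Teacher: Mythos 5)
Your proposal is correct and takes essentially the same route as the paper: the paper gives no separate proof of this lemma beyond the preceding ``Estimations'' paragraph (the bounds $R(T)\le R_0$ and $|\rho(0)|\le \rho_0$, the tail bounds on $y(-C)$ and $y(R_0)$, and the comparison $\tilde{J}(R(T),\rho,\tilde{y})\le \tilde{J}(R(T),\rho,y)$), which is exactly what you assemble, together with the trivial inclusion $\bar{A}_1\subset \tilde{A}_1$ applied to a near-minimizing sequence in the sub-level set $\{\tilde{J}\le 2\|\eta[k]\|^2_{L^2}\}$. The only slight overstatement is the chain $0\le x-\tilde{y}(x)\le x-y(x)$ in the sub-case $y(-C)>-C$, where $x-y(x)$ can be negative; but there $\tilde{y}(x)=x$, so the needed inequality $|x-\tilde{y}(x)|\le |x-y(x)|$ on the tails (where $\eta[k]\equiv 0$) still holds and the argument goes through unchanged.
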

\begin{lemma}\label{newset}
 There exists $(\bar{R}(T), \bar{\rho},\bar{y})\in \bar{A}_1$ such that 
 \begin{eqnarray}
 \inf_{(R(T),\rho,y)\in\bar{A}_1} \tilde{J}(R(T),\rho,y)=\tilde{J}(\bar{R}(T), \bar{\rho},\bar{y}).
\end{eqnarray}
\end{lemma}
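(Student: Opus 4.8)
The plan is to argue by the direct method in the calculus of variations, exploiting that the modified admissible set $\bar A_1$ has been cut down precisely so that all three defining data $R(T)\in[0,R_0]$, $\rho:[0,R(T)]\to[\rho_0,0]$ and $y$ (restricted to $[-M_1,M_1]$, equal to $x$ outside) now range over a compact set for a suitable notion of convergence. First I would take a minimizing sequence $(R^n,\rho^n,y^n)\in\bar A_1$ with $\tilde J(R^n,\rho^n,y^n)\to\inf_{\bar A_1}\tilde J$; without loss of generality $\tilde J(R^n,\rho^n,y^n)\le 2\|\eta[k]\|^2_{L^2}$, so all the a priori bounds derived above apply uniformly. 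Since $R^n\in[0,R_0]$ is bounded, pass to a subsequence with $R^n\to\bar R(T)\in[0,R_0]$. The functions $\rho^n$ are monotone (non-increasing) and uniformly bounded in $[\rho_0,0]$, so by Helly's selection theorem a further subsequence converges pointwise to a non-increasing $\bar\rho$; similarly $y^n$ are monotone non-decreasing, uniformly bounded on $[-M_1,M_1]$ (using $y^n(-C)\ge -C-(6T^2\|\eta[k]\|^2)^{1/3}$ and $y^n(M_1)\le M_1$-type bounds from the estimates above), hence another application of Helly gives $y^n\to\bar y$ pointwise a.e., with $\bar y$ non-decreasing and $\bar y(x)=x$ outside $[-M_1,M_1]$. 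One must check the constraint $xy(x)\ge 0$ and $y(x)\le\rho(0)$ for $x\le 0$ pass to the limit; both are closed under pointwise convergence, and the second survives because $\rho^n(0)\to\bar\rho(0)$ along the chosen subsequence (adjusting the representative of $\bar\rho$ at the endpoint if necessary). Thus $(\bar R(T),\bar\rho,\bar y)\in\bar A_1$.

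Next I would show lower semicontinuity of $\tilde J$ along this subsequence, which combined with the minimizing property forces $\tilde J(\bar R(T),\bar\rho,\bar y)=\inf_{\bar A_1}\tilde J$. The two integrals $\int_{-\infty}^0+\int_{R(T)}^\infty$ of $|\frac{x-y(x)}{T}-\eta[k](x)|^2$ are handled by Fatou's lemma: the integrands converge pointwise a.e. (since $y^n\to\bar y$ a.e. and outside $[-M_1,M_1]$ the integrand is identically zero because $y^n(x)=x$), and on the bounded region $[-M_1,M_1]$ one also has the uniform bound $\big|\frac{x-y^n(x)}{T}\big|\le \frac{2M_1}{T}$, so dominated convergence applies there and Fatou on the tails. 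The only delicate term is $\int_0^{R^n}\big|{-\tfrac{\rho^n(x)}{t^n(x)}}-\eta[k](x)\big|^2dx$, where $t^n$ is the unique non-increasing function furnished by Lemma \ref{BClemma} solving $-\frac{\rho^n(x)}{t^n(x)}=h_+\!\big(\frac{x}{T-t^n(x)}\big)$. Here I would argue that $t^n$, being non-increasing and $[0,T]$-valued, has a pointwise-convergent subsequence $t^n\to\bar t$ by Helly; passing to the limit in the defining relation (using continuity of $h_+$ and of the maps involved, plus the uniform bound $\frac{x}{T-t^n(x)}\le\Lambda$ established above to keep arguments in a compact set away from the blow-up $t=T$) shows $\bar t$ solves the same relation for $\bar\rho$, and by the uniqueness statement in Lemma \ref{simplelemma}(2) it coincides a.e. with the function associated to $(\bar R(T),\bar\rho)$. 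Then $-\frac{\rho^n(x)}{t^n(x)}\to -\frac{\bar\rho(x)}{\bar t(x)}$ a.e.\ on $(0,\bar R(T))$, the region of integration converges (as $R^n\to\bar R(T)$), and with the uniform $L^\infty$ bound $\big|\frac{\rho^n(x)}{t^n(x)}\big|\le C_4$ from estimate \eqref{r3}-type reasoning, dominated convergence gives convergence of this integral. Summing the three pieces yields $\tilde J(\bar R(T),\bar\rho,\bar y)\le\liminf_n\tilde J(R^n,\rho^n,y^n)=\inf_{\bar A_1}\tilde J$, and since $(\bar R(T),\bar\rho,\bar y)\in\bar A_1$ the reverse inequality is trivial.

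The main obstacle I anticipate is the handling of the interface term $\int_0^{R^n}|{-\rho^n/t^n}-\eta[k]|^2$: one must simultaneously control the moving endpoint $R^n$, the possible concentration of $t^n$ near $T$ (where $\frac{x}{T-t}$ blows up), and the fact that $t^n$ is defined only implicitly through $h_+$. The resolution rests on the a priori bound $\frac{x}{T-t^n(x)}\le\Lambda$ already derived in the estimates preceding the lemma, which confines everything to a compact set on which $h_+$ and its inverse branches are uniformly continuous, together with the uniqueness in Lemma \ref{simplelemma}(2) to identify the limit $\bar t$ with the canonical $t$ attached to $(\bar R(T),\bar\rho)$ by Lemma \ref{BClemma}. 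A secondary, more technical point is ensuring that the endpoint constraint $\bar y(x)\le\bar\rho(0)$ for $x\le 0$ is not lost in the passage to pointwise limits at the single point $x=0$; this is dealt with by choosing left-continuous representatives of $\rho^n$ and $y^n$ and noting the inequality is preserved on a set of full measure, which suffices since $\tilde J$ and membership in $\bar A_1$ only see $\rho,y$ up to a.e.\ equivalence.
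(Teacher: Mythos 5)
Your proposal is correct and follows essentially the same route as the paper, whose entire proof of this lemma is the single line ``Proof is trivial due to Helly's theorem'': you have simply written out in full the Helly-based direct-method argument that the authors leave implicit. Your extra care with the interface term (Helly on $t^n$, passage to the limit in the relation defining $t$ via $h_+$, and identification of the limit through the uniqueness in Lemma \ref{simplelemma}) addresses precisely the part the paper glosses over, and is the right way to justify the claimed triviality.
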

\begin{proof}
 Proof is trivial due to Helly's theorem. 
\end{proof}
\subsection{Proof of Theorem \ref{maintheoremop}}
\begin{proof}
 Let $\bar{R}(T), \bar{\rho},\bar{y}$ be as in Lemma \ref{newset}, then the desired initial data can be constructed from Lemmas \ref{BClemma}
 and \ref{BClemma2}.
\end{proof}

\section{Exact control problem for discontinuous flux}\label{section:exactcontrol}

 \noindent \textbf{Reachable set}: Let $T,C_1,C_2,B_1,B_2, R\in \mathbb{R}$ be given so that $T>0, C_1<0<C_2$, $B_1<0<B_2$.
 Let $\delta>0$ be an arbitrary small number such that  $B_1+\delta<0, B_2-\delta>0$.  Then in order to define $Reachable\ set$ we
 need to consider the following 2 cases.

\noindent\textbf{Case 1:} $R\in (0,C_2).$\\
\noindent Let us consider any  non decreasing functions  $y:[C_1,0]\cup[R,C_2]\rightarrow[B_1+\delta,B_2-\delta]$ and  
 $\rho:[0,R]\rightarrow [B_1-\delta,0]$ which satisfies 
 $xy(x)\geq 0$ for all $x\in [C_1,0]\cup[R,C_2]$ and $y(x)\leq \rho(0)\ \mbox{for all}\ x\in[C_1,0]$.
Then by Lemma \ref{BClemma} there  exists a unique non increasing function
 $t:[0,R]\rightarrow [0,T],$ which satisfies
$-\frac{\rho(x)}{t(x)}=h_+\left(\frac{x}{T-t(x)}\right) \ a.e.x\ \in(0,R).$

Let $\rho(\cdot), y(\cdot), t (\cdot)$ be  as above then
%the following set $ \varTheta$ associated to  $T,\delta, R, C_1,C_2,B_1,B_2,$ by
%\begin{eqnarray*}
% \varTheta_{T,\delta, R, C_1,C_2,B_1,B_2}=\{(\rho,y): \ \rho \ \mbox{and} \ y\   \mbox{as above}\}.\end{eqnarray*}
%For $(\rho,y)\in \varTheta_{T,\delta, R, C_1,C_2,B_1,B_2}$,
we define a function $ W:[C_1,C_2]\rightarrow \mathbb{R}$   by 
 \begin{eqnarray}\label{Reachable1}
 \begin{array}{ll}
 W(x)=(g^\prime)^{-1}\left(\frac{x-y(x)}{T}\right)\mathbf{1}_{[C_1,0]} &+ (f^\prime)^{-1}\left(\frac{x}{T-t(x)}\right)\mathbf{1}_{[0,R]}\\
  &+(f^\prime)^{-1}\left(\frac{x-y(x)}{T}\right)\mathbf{1}_{[R,C_2]}.
 \end{array}
  \end{eqnarray}
Then  we define the  $Reachable \ set$ associated $ T,\delta, R, C_1,C_2,B_1,B_2$  by 
$$ Reachable \ set_+=\{ W:[C_1,C_2]\rightarrow \mathbb{R} \ \mbox{satisfies} \ \eqref{Reachable1}\}.$$
\noindent\textbf{Case 2:} $R\in (C_1,0).$\\
  Let us consider any  non decreasing functions  $y:[C_1,R]\cup[0,C_2]\rightarrow[B_1+\delta,B_2-\delta]$ and  
 $\rho:[R,0]\rightarrow [0, B_2+\delta]$ which satisfies 
 $xy(x)\geq 0$ for all $x\in [C_1,R]\cup[0,C_2]$ and $ \rho(0)\leq y(x),\ \mbox{for all}\ x\in[0,C_2]$.
Then by Lemma \ref{BClemma} there  exists a unique non increasing function
 $t:[R,0]\rightarrow [0,T],$ which satisfies
$-\frac{\rho(x)}{t(x)}=h_-\left(\frac{x}{T-t(x)}\right) \ a.e.x\ \in(R,0).$\\
Let $\rho(\cdot), y(\cdot), t (\cdot)$ be  as above then we define a function $ W:[C_1,C_2]\rightarrow \mathbb{R}$   by
\begin{eqnarray}\label{Reachable2}
 \begin{array}{ll}
 W(x)=(g^\prime)^{-1}\left(\frac{x-y(x)}{T}\right)\mathbf{1}_{[C_1,R]} &+ (g^\prime)^{-1}\left(\frac{x}{T-t(x)}\right)\mathbf{1}_{[R,0]}\\
  &+(f^\prime)^{-1}\left(\frac{x-y(x)}{T}\right)\mathbf{1}_{[0,C_2]}.
 \end{array}
  \end{eqnarray}
Then  we define the  $Reachable \ set$ associated $ T,\delta, R, C_1,C_2,B_1,B_2$  by 
$$ Reachable \ set_-=\{ W:[C_1,C_2]\rightarrow \mathbb{R} \ \mbox{satisfies} \ \eqref{Reachable2}\}.$$
Finally clubbing Case 1 and Case 2, we define 
$$ Reachable \ set= Reachable \ set_+\cup  Reachable \ set_-.$$
 \begin{theorem}\label{exact-control}
   Let $T>0,C_1<0<C_2, B_1<0<B_2$. Assume that  $\bar{u}_0 \in L^\infty(\mathbb{R}\setminus(B_1,B_2))$ and $W\in Reachable\ set$. 
   Then there exist a solution $u\in L^\infty(\mathbb{R}\times (0,T))$  of (\ref{conlaw-equation})
 such that 
\begin{eqnarray} u (x,T) &=& W (x) \quad x \in (C_1,C_2), \\
 u (x,0) &=& u_0 (x)\mathbf{1}_{\mathbb{R}\setminus(B_1,B_2)} + \bar{u}_0 (x)\mathbf{1}_{ (B_1,B_2)}
 \end{eqnarray}
 \end{theorem}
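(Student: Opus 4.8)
The plan is to extract the conclusion directly from the way the Reachable set was defined, feeding the data carried by $W$ into Lemmas \ref{BClemma} and \ref{BClemma2}. Since $W$ belongs to the Reachable set, exactly one of the two cases of the definition applies; by the left/right symmetry between the two quarter--planes, which exchanges the roles of $f$ and $g$, of $h_+$ and $h_-$, and of Lemmas \ref{BClemma} and \ref{BClemma2}, it suffices to treat Case 1, $R\in(0,C_2)$, with $W$ given by \eqref{Reachable1}. In that case $W$ comes equipped with a non-decreasing $\rho:[0,R]\to[B_1-\delta,0]$, a non-decreasing $y:[C_1,0]\cup[R,C_2]\to[B_1+\delta,B_2-\delta]$ with $xy(x)\ge0$ and $y(x)\le\rho(0)$ for $x\le0$, and the strictly decreasing $t:[0,R]\to[0,T]$ furnished by Lemma \ref{BClemma}, which satisfies $-\rho(x)/t(x)=h_+\!\big(x/(T-t(x))\big)$ a.e. on $(0,R)$. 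Note also that $R=R_1(T)>0$ forces $L_1(T)=0$.

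First I would complete the foot--point function $y$ to a non-decreasing $\hat y:\mathbb{R}\setminus[0,R]\to\mathbb{R}$ that still meets the hypotheses \eqref{rho-relation} of Lemma \ref{BClemma} ($x\hat y(x)\ge0$ everywhere and $\hat y(x)\le\rho(0)$ for $x\le0$) and, in addition, is chosen so that the backward construction of Lemma \ref{BClemma} returns an initial datum agreeing with the prescribed $\bar u_0$ on $\mathbb{R}\setminus(B_1,B_2)$. This is exactly where the $\delta$--margin in the definition of the Reachable set is used: since every characteristic reaching $(C_1,C_2)\times\{T\}$ has its foot in $[B_1+\delta,B_2-\delta]\subset(B_1,B_2)$, there is room to interpose a single entropy--admissible transition shock in $(B_1,B_1+\delta)$ (and one in $(B_2-\delta,B_2)$) joining the controlled profile to $\bar u_0$, after which one simply lets $\hat y$ be the (automatically non-decreasing) Lax--Oleinik foot--point function of the entropy solution issued by $\bar u_0$ in each half--plane; this is the step carried out, for the pure Cauchy problem, in Step 3 of the proof of Lemma \ref{BClemma} and in Lemma 3.6 of \cite{Sop}.

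With $\hat y$ fixed, applying Lemma \ref{BClemma} to the triple $(R,\rho,\hat y)$ produces a solution $u\in L^\infty(\mathbb{R}\times[0,\infty))$ of \eqref{conlaw-equation}, together with the strictly decreasing $t$ above, such that $u(x,T)=(f^\prime)^{-1}\!\big(x/(T-t(x))\big)$ a.e. on $[0,R)$ by \eqref{Final-state}. On $(C_1,0)$ and on $(R,C_2)$ part (4) of Theorem \ref{AG1} (recalling $L_1(T)=0$) gives $g^\prime(u(x,T))=(x-\hat y(x))/T$ and $f^\prime(u(x,T))=(x-\hat y(x))/T$ respectively; since $\hat y=y$ there, a comparison with \eqref{Reachable1} yields $u(\cdot,T)=W$ on $(C_1,C_2)$. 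By the choice of $\hat y$ the initial trace $u(\cdot,0)$ equals $\bar u_0$ on $\mathbb{R}\setminus(B_1,B_2)$, so taking for $u_0$ the restriction of $u(\cdot,0)$ to $(B_1,B_2)$ gives an initial datum of the required form and concludes Case 1. Case 2 follows verbatim, replacing \eqref{Reachable1} by \eqref{Reachable2}, $h_+$ by $h_-$, and Lemma \ref{BClemma} by Lemma \ref{BClemma2}.

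The only genuinely nontrivial point is the construction of $\hat y$ in the second step, i.e. splicing the arbitrary exterior datum $\bar u_0$ onto the backward--constructed controlled profile by admissible transitions while preserving the monotonicity and sign conditions \eqref{rho-relation} and keeping the transition waves away from $(C_1,C_2)$ up to time $T$; the slack $\delta>0$ built into the Reachable set is precisely what makes this always possible, so that no further restriction on $\bar u_0$ is needed. Everything else is a direct reading of the explicit formula and of \eqref{Final-state}.
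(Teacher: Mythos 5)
Your overall strategy is the paper's: reduce to Case 1 versus Case 2, feed the data $(R,\rho,y)$ attached to $W$ into the backward construction Lemmas \ref{BClemma} and \ref{BClemma2}, identify $u(\cdot,T)$ on $(C_1,0)\cup(R,C_2)$ through the foot-point formula of Theorem \ref{AG1}, and absorb the prescribed exterior datum $\bar u_0$ in the margins $(B_1,B_1+\delta)$ and $(B_2-\delta,B_2)$. The step you yourself single out as "the only genuinely nontrivial point" is, however, exactly where your argument has a gap. You claim that one can interpose \emph{a single entropy-admissible transition shock} in each margin joining the controlled profile to $\bar u_0$, and that the $\delta$-slack alone makes it "always possible" to keep the transition waves away from $(C_1,C_2)$ up to time $T$. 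Neither assertion holds as stated: a direct jump between the backward-constructed datum and $\bar u_0(B_2+)$ need not be admissible for a convex flux, and even when it is, its Rankine--Hugoniot speed is dictated by the two adjacent states, which you do not control; since $\bar u_0$ is arbitrary (e.g.\ a large negative constant on $(B_2,\infty)$, whose characteristics sweep leftward across $(C_1,C_2)$ well before time $T$), nothing in the $\delta$-margin by itself prevents the exterior waves from invading the backward cone of $(C_1,C_2)\times\{T\}$ and destroying both the target trace and the monotonicity/sign conditions you need for your extended foot-point map $\hat y$.

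The missing idea is the free-region argument of Lemmas \ref{FreeRegion1} and \ref{FreeRegion2}: one inserts \emph{buffer constants} $\lambda_2\gg 0$ on $(B_2-\delta,B_2)$ and $\lambda_1\ll 0$ on $(B_1,B_1+\delta)$, and it is the superlinear growth of $f$ and $g$ that allows these constants (hence the resulting shock speeds) to be taken arbitrarily large, so that the shocks separating the buffer states from the waves generated by $\bar u_0$ outrun any influence of $\bar u_0$ and leave the trapezoids $Q_1\cup Q_2\supset$ (backward domain of the construction over $(C_1,C_2)\times\{T\}$) untouched. Only after this insulation can one run Lemmas \ref{BClemma} and \ref{BClemma2} inside $Q_1\cup Q_2$ exactly as you do, and the initial-data matching outside $(B_1,B_2)$ is then automatic rather than obtained by choosing $\hat y$ as the Lax--Oleinik foot-point map of the evolution of $\bar u_0$ (a choice which, absent the barrier, need not be compatible with the prescribed $y$ on $(C_1,0)\cup(R,C_2)$, nor with the hypotheses \eqref{rho-relation}). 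So the skeleton of your proof is right, but you must replace the "single transition shock'' step by the large-shock free-region construction (or an equivalent quantitative barrier argument); without it the proof does not go through for arbitrary $\bar u_0\in L^\infty(\mathbb{R}\setminus(B_1,B_2))$.
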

In order to prove the above Theorem, we need the following free region Lemmas and the backward construction Lemmas  \ref{BClemma},\ref{BClemma2}.
\begin{lemma}\label{FreeRegion1}
 Let $0<B_2, 0<C_2$. Let us assume that $u_0\in L^\infty(B_2,\infty)$ be given. Let $P_2>C_2$ be any number, then there exists $\lambda_2>0$ and a
 solution  $u \in L^\infty(\mathbb{R}_+\times [0,T])$ of the following  system
 \begin{eqnarray}
  \begin{array}{llll}
    u_t+f(u)_x&=&0 \ \ \ \mbox{if}\ (x,t)\in\mathbb{R}_+\times(0,T),\\
   u(x,t)&=&\lambda_2 \ \mbox{if}\ (x,t)\in Q_2,\\
   u(x,0)&=&u_0 \ \mbox{if}\ x\in(B_2,\infty),
  \end{array}
 \end{eqnarray}
 where the domain $Q_2$, is given by 
 $Q_2=\{ (x,t) : 0\leq t \leq T, 0\leq x \leq (t-T)\frac{P_2-B_2}{T}+P_2 \}.$
\end{lemma}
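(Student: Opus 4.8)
The plan is to realise the required solution by placing a very large constant state to the left of $x=B_2$ at time $t=0$, solving the resulting whole-line Cauchy problem, and then checking that this constant state ``plows'' to the right as a single fast shock that leaves the triangle $Q_2$ in its wake.

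\emph{Choice of $\lambda_2$ and reduction.} Set $M:=\|u_0\|_{L^\infty(B_2,\infty)}$. Since $f$ is $C^1$, strictly convex and superlinear, $f'$ is an increasing bijection of $\mathbb{R}$ and $f(\lambda)/\lambda\to\infty$ as $\lambda\to\infty$; hence I can fix $\lambda_2>\max\{0,M\}$ so large that
\[
\frac{f(\lambda_2)-f(-M)}{\lambda_2+M}\ \ge\ \frac{P_2-B_2}{T}.
\]
Define $\widehat u_0\in L^\infty(\mathbb{R})$ by $\widehat u_0=\lambda_2$ on $(-\infty,B_2)$ and $\widehat u_0=u_0$ on $[B_2,\infty)$, and let $u$ be the entropy solution of $u_t+f(u)_x=0$ on $\mathbb{R}\times(0,\infty)$ with datum $\widehat u_0$. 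Its restriction to $\{x>0,\ 0<t<T\}$ is a bounded weak solution of the equation there, it lies in $L^\infty(\mathbb{R}_+\times[0,T])$, and $u(x,0)=u_0(x)$ for $x>B_2$; so the whole task reduces to showing $u\equiv\lambda_2$ on $Q_2$.

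\emph{The leading shock.} Because $\widehat u_0\equiv\lambda_2$ on $(-\infty,B_2)$ while $\widehat u_0\le M<\lambda_2$ a.e.\ on $(B_2,\infty)$, the Lax--Oleinik representation for a single convex flux shows that the entropy solution carries a leading shock $x=\sigma(t)$ issuing from $(B_2,0)$: for $x<\sigma(t)$ the minimal backward characteristic lands strictly inside $(-\infty,B_2)$, where $\widehat u_0=\lambda_2$, which forces $u(x,t)=\lambda_2$ there; and $\sigma$ is Lipschitz with $\sigma(0+)=B_2$ and, for a.e.\ $t$,
\[
\dot\sigma(t)=\frac{f(\lambda_2)-f(u_r(t))}{\lambda_2-u_r(t)},\qquad u_r(t):=u(\sigma(t)+,t).
\]
By the minimum principle $u\ge -M$ everywhere (the essential infimum of $\widehat u_0$ is $\ge -M$, since $\lambda_2>0$ and $|u_0|\le M$), so $-M\le u_r(t)<\lambda_2$, the strict upper bound being Lax admissibility of the shock whose left trace is the constant $\lambda_2$. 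The chord slope $p\mapsto\frac{f(\lambda_2)-f(p)}{\lambda_2-p}$ is nondecreasing on $(-\infty,\lambda_2)$ by convexity of $f$; hence $\dot\sigma(t)\ge\dfrac{f(\lambda_2)-f(-M)}{\lambda_2+M}\ge\dfrac{P_2-B_2}{T}$ for a.e.\ $t\in[0,T]$.

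\emph{Conclusion, and the main obstacle.} Integrating the speed bound from $\sigma(0+)=B_2$ gives $\sigma(t)\ge B_2+\frac{P_2-B_2}{T}\,t=(t-T)\frac{P_2-B_2}{T}+P_2$ for all $t\in[0,T]$, i.e.\ the right edge of $Q_2$ stays to the left of the leading shock. Therefore $Q_2\subset\{(x,t):0\le x<\sigma(t),\ 0\le t\le T\}$, on which $u\equiv\lambda_2$, and since $\lambda_2>0$ this is exactly the assertion; the mirror statement for the left half-plane (Lemma on $x<0$) is obtained verbatim with $g$ in place of $f$. The one point that needs genuine care is that the lower bound on $\dot\sigma$ must hold uniformly for all $t\in[0,T]$, not only at $t=0$: this is precisely what the uniform estimate $u_r(t)\ge -M$ (a maximum principle) together with the superlinearity-driven choice of $\lambda_2$ supplies, while everything else — existence of the entropy solution, the single-leading-shock structure behind which the solution equals the constant $\lambda_2$, and the Rankine--Hugoniot speed formula — is classical for a scalar conservation law with a single convex flux.
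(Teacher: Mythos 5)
Your proposal is correct and takes essentially the same route as the paper: place a large constant state $\lambda_2$ to the left of $B_2$, use the superlinear growth of $f$ to make the resulting leading shock travel at least as fast as the right edge of $Q_2$, and conclude that the region swept behind it retains the value $\lambda_2$; the paper merely states this and delegates the details to the free-region lemmas of \cite{Sco}, which you have filled in via Lax--Oleinik and the generalized-characteristic speed bound. One cosmetic point: since your speed estimate is not strict you only get $Q_2\subset\{0\le x\le \sigma(t)\}$, so either read $u=\lambda_2$ on $Q_2$ in the a.e.\ sense or enlarge $\lambda_2$ so that the chord slope strictly exceeds $(P_2-B_2)/T$.
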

\begin{proof} One can choose $u_0(x)=\lambda_2$, for $x\in(0,B_2)$, where $\lambda_2$ is some  large positive number. Roughly speaking, the 
superlinear growth of $f$ allows a large shock due to $\lambda_2$, which kills the given $u_0$ in $(B_2,\infty)$.
The rigorous  proof follows as in the 
same spirit of the free region Lemmas 2.2, 2.3, 2.4 as in \cite{Sco}.
\end{proof}
\begin{lemma}\label{FreeRegion2}
 Let $B_1<0, C_1<0$. Let us assume that $u_0\in L^\infty(-\infty,B_1)$ be given.
 Let $P_1<C_1$ be any number, then there exists $\lambda_1<0$ and a
 solution  $u \in L^\infty(\mathbb{R}_-\times [0,T])$ of the following  system
  \begin{eqnarray}
  \begin{array}{llll}
    u_t+g(u)_x&=&0 \ \ \ \mbox{if}\ (x,t)\in\mathbb{R}_-\times(0,T),\\
  u(x,t)&=&\lambda_1 \ \mbox{if}\ (x,t)\in Q_1,\\
   u(x,0)&=&u_0 \ \mbox{if}\ x\in(-\infty,B_1),
  \end{array}
 \end{eqnarray}
 where the domain $Q_1$, is given by 
 $Q_1=\{ (x,t) : 0\leq t \leq T, 0\geq x \geq (t-T)\frac{P_1-B_1}{T}+P_1 \}.$
\end{lemma}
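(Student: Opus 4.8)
The plan is to produce $u$ by a single large entropy shock, mirroring the construction sketched for Lemma~\ref{FreeRegion1} but with $f$ replaced by $g$ and the orientation reversed (note that reflection does not preserve convexity, which is why the left-half statement needs its own argument). Set $M_0=\|u_0\|_{L^\infty(-\infty,B_1)}$ and choose a number $\lambda_1<\min\{0,\theta_g,-M_0\}$ with $|\lambda_1|$ large, to be fixed at the end. First I would extend the datum to the whole line by
\[
 \tilde u_0(x)=u_0(x)\,\mathbf{1}_{\{x<B_1\}}+\lambda_1\,\mathbf{1}_{\{x\ge B_1\}},
\]
let $U$ be the Kru\v{z}kov entropy solution of $U_t+g(U)_x=0$ on $\mathbb{R}\times[0,T]$ with datum $\tilde u_0$, and define $u:=U|_{\mathbb{R}_-\times[0,T]}$. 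The maximum principle gives $\|U\|_\infty\le\max\{M_0,|\lambda_1|\}$, so $u\in L^\infty(\mathbb{R}_-\times[0,T])$ solves the equation in $\mathbb{R}_-\times(0,T)$, and by construction $u(x,0)=u_0(x)$ for $x<B_1$. It then remains only to fix $\lambda_1$ so that $u\equiv\lambda_1$ on $Q_1$.

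Since $\tilde u_0\equiv\lambda_1$ on $[B_1,\infty)$ while $u_0\ge -M_0>\lambda_1$ on $(-\infty,B_1)$, the datum is compressive at $B_1$, so a single entropy shock $x=X(t)$ issues from $(B_1,0)$ separating the region influenced by $u_0$ (to its left) from the constant state $\lambda_1$ (to its right). All waves generated in $\{x<B_1\}$ travel with speeds bounded by $\sup_{|v|\le M_0}|g'(v)|<\infty$, so none can overtake $X$ from the left; consequently to the right of $X$ the solution is identically $\lambda_1$ for every $t\in[0,T]$. The key estimate will be a uniform upper bound on the shock speed. Writing $u^-(t)=U(X(t)^-,t)\le M_0$ for the left trace and using that, $g$ being convex, the secant slope of $g$ from $\lambda_1$ to $u^-$ increases with $u^-$, the Rankine--Hugoniot speed satisfies
\[
 \dot X(t)=\frac{g(u^-(t))-g(\lambda_1)}{u^-(t)-\lambda_1}\;\le\;\frac{g(M_0)-g(\lambda_1)}{M_0-\lambda_1}.
\]
By the superlinear growth of $g$ the right-hand side tends to $-\infty$ as $\lambda_1\to-\infty$. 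I would therefore fix $\lambda_1$ so negative that this bound is $\le(P_1-B_1)/T$.

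With $\lambda_1$ so chosen, $\dot X(t)\le(P_1-B_1)/T$ for every $t$, and since $X(0)=B_1$ integration gives
\[
 X(t)\le B_1+\frac{P_1-B_1}{T}\,t=(t-T)\frac{P_1-B_1}{T}+P_1,\qquad t\in[0,T],
\]
so the shock curve stays to the left of the left edge of $Q_1$. Hence every point of $Q_1$ lies to the right of $X(t)$ and to the left of the interface, where $u\equiv\lambda_1$; this yields $u=\lambda_1$ on $Q_1$ and finishes the construction. The remaining verifications—that $U$ is the unique entropy solution, that the one-shock picture with $\lambda_1$ to its right is correct, together with the attendant $L^\infty$ bounds—are routine and follow verbatim the free-region Lemmas~2.2--2.4 of \cite{Sco} with $f$ replaced by $g$ and orientations reversed. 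I expect the only genuine difficulty to be the uniform speed estimate: because $u_0$ is an arbitrary $L^\infty$ datum, the curve $X$ need not be straight and its left state $u^-(t)$ varies in time, so $\dot X$ must be controlled uniformly in $t$; convexity (monotonicity of the secant slope) together with superlinearity of $g$ is exactly what supplies this bound.
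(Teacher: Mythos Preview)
Your proposal is correct and follows essentially the same approach as the paper: extend the datum by a large negative constant $\lambda_1$ on $(B_1,0)$, use the superlinear growth of $g$ to force the resulting entropy shock from $(B_1,0)$ to move leftward fast enough that $Q_1$ lies entirely in the constant region $\{u=\lambda_1\}$, and defer the routine verifications to the free-region lemmas of \cite{Sco}. In fact you supply more detail than the paper, whose proof is a one-line reference to Lemma~\ref{FreeRegion1} and to \cite{Sco}; your uniform speed estimate via convexity of the secant slope and superlinearity of $g$ is exactly the mechanism the paper alludes to.
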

\begin{proof}
 Similarly by choosing $u_0(x)=\lambda_1$, for $x\in(B_1,0)$, where $\lambda_2$ is some  large negative number. 
 Proof is similar like as in the previous lemma.
\end{proof} 
\begin{figure}[ht]\label{Fig7}
        \centering
        \def\svgwidth{0.8\textwidth}
        \begingroup
    \setlength{\unitlength}{\svgwidth}
  \begin{picture}(1,0.55363752)%
    \put(0,0.1){\includegraphics[width=0.9\textwidth]{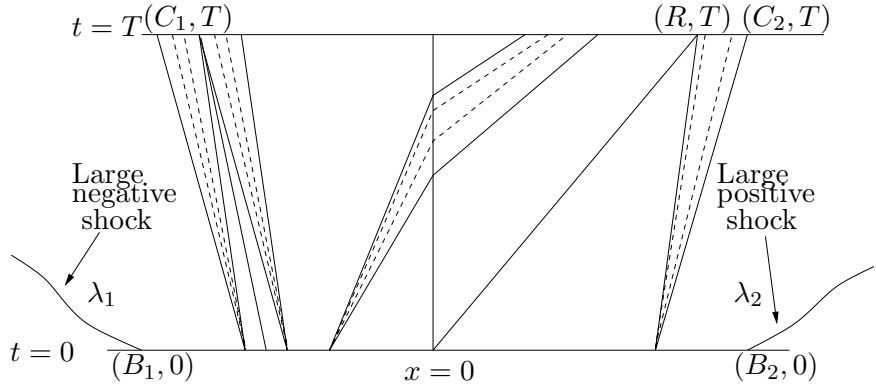}}%
     \put(0.918,0.32){\color[rgb]{0,0,0}\makebox(0,0)[lb]{\smash{Large}}}%
        \put(0.918,0.295){\color[rgb]{0,0,0}\makebox(0,0)[lb]{\smash{positive}}}%
    \put(0.918,0.26){\color[rgb]{0,0,0}\makebox(0,0)[lb]{\smash{ shock}}}
    \put(0.08,0.32){\color[rgb]{0,0,0}\makebox(0,0)[lb]{\smash{Large}}}%
        \put(0.08,0.295){\color[rgb]{0,0,0}\makebox(0,0)[lb]{\smash{negative}}}%
    \put(0.08,0.26){\color[rgb]{0,0,0}\makebox(0,0)[lb]{\smash{ shock}}}%
   % \put(0.85,0.09){\color[rgb]{0,0,0}\makebox(0,0)[lb]{\smash{$t=0$}}}%
        \put(0.0,0.087249){\color[rgb]{0,0,0}\makebox(0,0)[lb]{\smash{$t=0$}}}
                \put(0.835,0.52587249){\color[rgb]{0,0,0}\makebox(0,0)[lb]{\smash{$(R,T)$}}}%
                                \put(0.1723835,0.52587249){\color[rgb]{0,0,0}\makebox(0,0)[lb]{\smash{$(C_1,T)$}}}%
                \put(0.95,0.52587249){\color[rgb]{0,0,0}\makebox(0,0)[lb]{\smash{$(C_2,T)$}}}%
        \put(0.512,0.0629){\color[rgb]{0,0,0}\makebox(0,0)[lb]{\smash{$x=0$}}}%
    \put(0.08,0.51249){\color[rgb]{0,0,0}\makebox(0,0)[lb]{\smash{$t=T$}}}%
        \put(0.1,0.1649){\color[rgb]{0,0,0}\makebox(0,0)[lb]{\smash{$\lambda_1$}}}%
                \put(0.94,0.1649){\color[rgb]{0,0,0}\makebox(0,0)[lb]{\smash{$\lambda_2$}}}%
                                \put(0.94,0.07){\color[rgb]{0,0,0}\makebox(0,0)[lb]{\smash{$(B_2, 0)$}}}%
                \put(0.13,0.07){\color[rgb]{0,0,0}\makebox(0,0)[lb]{\smash{$(B_1,0)$}}}%
  \end{picture}%
\endgroup
        \caption{An illustration of the Theorem}
        \label{fig:8}
\end{figure}
\begin{proof}[Proof of Theorem \ref{exact-control}.] Let $\delta>0$ be an arbitrary small number. Then define an initial data in the 
domain $(B_1,B_1,+\delta)\cup (B_2-\delta, B_2)$ by $u_0(x)=\lambda_1\mathbf{1}_{(B_1,B_1,+\delta)}+\lambda_2\mathbf{1}_{(B_2-\delta, B_2)}$,
where $\lambda_1$ and $\lambda_2$ are as in Lemma \ref{FreeRegion2} and Lemma \ref{FreeRegion1} respectively.
From the  above two Lemmas it is clear  that in the region $Q_1\cup Q_2$
there is no influence of the given initial data $u_0\in \mathbb{R}\setminus (B_1,B_2),$ which allow us to use the backward construction Lemma 
\ref{BClemma}, \ref{BClemma2} in the domain
$Q_1\cup Q_2$.
Let us consider
Case 1, i.e., consider any  $R\in (0,C_2)$. Then given $\rho(\cdot),t(\cdot),y(\cdot)$, we apply Lemma \ref{BClemma}.
 Therefore given any $W(x)\in Reachable\ set _+$, we obtain a solution $u\in L^\infty(\mathbb{R}\times (0,T))$  of (\ref{conlaw-equation}) such that
 $u(x,T)=W(x)$ for $x\in (C_1,C_2)$. Similarly one  can construct a solution by using Lemma \ref{BClemma2} 
 when  $W(x)\in Reachable\ set _-$. Hence the theorem.
\end{proof}
\begin{remark}
Due to the explicit formulas (\ref{r2.12}), (\ref{r2.13}) in Theorem \ref{AG1}, the reachable set in Theorem  \ref{exact-control} is optimal.
\end{remark}
\section*{Acknowledgments} The first author would like to thank Gran Sasso Science Institute, L'Aquila, Italy for the hospitality during his
visit  and also IFCAM for the funding.


\begin{thebibliography}{0}

\bibitem{Sco}
\newblock Adimurthi, S. S. Ghoshal and G. D. Veerappa Gowda,
\newblock Exact  controllability of scalar conservation law with strict convex flux, \emph{Math. Control Relat. Fields.},
 \textbf{04},   (2014), 401--449.

\bibitem{Sop}
\newblock Adimurthi, S. S. Ghoshal and  G.D. Veerappa Gowda,
\newblock Optimal 
controllability for  scalar conservation law with  convex flux, 
 \emph{J. Hyperbolic Differ. Equ.},  \textbf{11},   (2014),  477--491.


\bibitem{Ssh}
\newblock Adimurthi, S. S. Ghoshal and G. D. Veerappa Gowda,
\newblock Structure of an entropy solution of a scalar
conservation law with strict convex flux, \emph{J. Hyper. Differential Equations},  \textbf{09},  (2012), 571--611. 

\bibitem{Kyoto} Adimurthi and G. D. Veerappa Gowda, Conservation laws with discontinuous flux,
 \textit{J. Math. Kyoto Univ.} 43, 1,  (2003), 27--70.
 
 \bibitem{Siam} Adimurthi, J. Jaffre and G.D. Veerappa Gowda, Godunov type methods for scalar
 conservation laws with flux function discontinuous in the space variable, \textit{SIAM J. Numer. Anal.} 
42, 1,  (2004), 179--208. 
 
\bibitem{Jhde} Adimurthi, S. Mishra and G. D. Veerappa Gowda, Optimal entropy solutions for conservation laws
 with discontinuous flux-functions, \textit{J. Hyperbolic Differ. Equ.} 2, 4, (2005), 783--837.


\bibitem{Jde} Adimurthi, S. Mishra and G. D. Veerappa Gowda, Explicit Hopf-Lax type formulas for Hamilton-Jacobi
 equations and conservation laws with discontinuous coefficients, \textit{J. Differential Equations}, 241, (2007),
 1, 1--31.
 
\bibitem{SamJEE}
\newblock B. Andreianov, C. Donadello, S.S. Ghoshal and U. Razafison,
\newblock On the attainability set for triangular
type system of conservation laws with initial data control, \emph{J. Evol. Equ.},  \textbf{15}, (2015),  503--532.
 
\bibitem{And1} B. Andreianov, K. H. Karlsen and  N. H. Risebro, A theory of $L^1$-dissipative solvers for
 scalar conservation laws with discontinuous flux. \textit{Arch. Ration. Mech. Anal.} 201, 1, (2011), 27--86.

 

\bibitem{An1}
     \newblock F. Ancona and A. Marson, % first name middle initial. and then last name.  Only the first character in the paper title is capitalized.
     \newblock On the attainability set for scalar non linear conservation laws  with boundary control,
     \newblock \textit{SIAM J.Control Optim}, 36, 1,  (1998), 290--312.

 \bibitem{An2}    F. Ancona and G. M. Coclite, On the attainable set for Temple class systems with boundary
controls. \textit{SIAM J. Control Optim.}, 43, 6,  (2005), 2166--2190.

\bibitem{Bre1} A. Bressan and  G. M. Coclite. On the boundary control of systems of conservation laws. \textit{SIAM
J. Control Optim.}, 41, 2,  (2002), 607--622.
     
\bibitem{Burger} R. B\"{u}rger, K.H. Karlsen, N.H. Risebro and J. D. Towers, Well-posedness in $\textrm{BV}_t$ 
and convergence of a difference scheme for continuous sedimentation in ideal clarifier thickener units, 
\textit{ Numer. Math.} 97, 1,  (2004), 25--65.


 \bibitem{BurKarRisTow} R. B\"{u}rger, K.H. Karlsen, N.H. Risebro and J. D. Towers, Monotone difference approximations
 for the simulation of clarifier-thickener units. \textit{Comput. Visual. Sci.}, 6, (2004), 83--91.
     
\bibitem{Cas} C.Castro, F.Palacios and  E.Zuazua, Optimal control and vanishing viscosity for the Burgers equations,
 {\it Integral methods in science and engineering}, 2,  Birkhouser Boston Inc, Boston MA, (2010), 65--90,

\bibitem{Cas2} C.Castro and E.Zuazua, Flux identification for 1-d scalar conservation laws in the presence of shocks,
 {\it Math.Comp.}, 80, (2011), 2025--2070.
 
\bibitem{Coron} J.-M. Coron, Global asymptotic stabilization for controllable systems without drift, \textit{
Math. Control Signals Systems.}, 5, 3,  (1992), 295--312. 

\bibitem{CoronShyam}
\newblock J.-M. Coron, S. Ervedoza, S. S. Ghoshal,  O. Glass and V. Perrollaz, 
\newblock Dissipative boundary conditions for $2\times 2$ hyperbolic systems of conservation laws for entropy solutions in BV- \emph{submitted}.



\bibitem{Da1} C. M. Dafermos, Hyperbolic Conservation Laws in Continuum Physics, 2 nd edition, Springer
Verlag, Berlin, (2000).

\bibitem{Diehl5} S. Diehl, Continuous sedimentation of multi-component particles, \textit{ Math. Methods
Appl. Sci.}, 20, (1997), 1345--1364.

 
\bibitem{Gimseresebro} T. Gimse and N. H. Risebro, Solution of the Cauchy problem for a conservation
 law with a discontinuous flux function, \textit{SIAM J. Math. Anal.}, 23 (1992),  635--648.

 \bibitem{Glass}
\newblock  O. Glass and S. Guerrero, % first name middle initial. and then last name.  Only the first character in the paper title is capitalized.
     \newblock On the uniform controllability of the Burgers equation,
     \newblock \emph{SIAM J. Control optim.}, \textbf{46}, no.4 (2007), 1211--1238.

\bibitem{Hor}
\newblock T. Horsin, % first name middle initial. and then last name.  Only the first character in the paper title is capitalized.
     \newblock On the controllability of the Burger equation,
     \newblock \emph{ESIAM, Control optimization and Calculus of variations}, 3,  (1998), 83--95.


\bibitem{Jos}  K. T. Joseph and G. D. Veerappa Gowda, Explicit formula for the solution of Convex  conservation laws
 with boundary condition, {\it Duke Math.J.}, 62, (1991) 401--416.

\bibitem{Towers} J.D. Towers, Convergence of a difference scheme for conservation laws with a 
discontinuous flux, \textit{SIAM J. Numer. Anal.} 38, 2, (2000), 681--698.


 \end{thebibliography}
\end{document}